\documentclass[11pt, a4paper]{article}

\usepackage[whole]{bxcjkjatype}

\usepackage{bm, amsmath, amsthm, amssymb, accents, comment}
\usepackage{ascmac, authblk}
\RequirePackage{amsthm,amsmath,amsfonts,amssymb}
\RequirePackage{natbib}
\RequirePackage{graphicx}\usepackage{tikz}
\usepackage{pgfplots}
\usepackage{subcaption}
\pgfplotsset{
	compat=newest, 
	cycle list name=exotic }

\newtheorem{theorem}{Theorem}[section]
\newtheorem{proposition}{Proposition}[section]
\newtheorem{lemma}{Lemma}[section]

\DeclareMathOperator*{\argmin}{arg\,min}

  \makeatletter
\def\widebar{\accentset{{\cc@style\underline{\mskip10mu}}}}
\makeatother

\usepackage{typearea}
\typearea{12}

\begin{document}
	
	\title{Double shrinkage priors for a normal mean matrix}	
	
	\author[1,2]{Takeru Matsuda}
	
	\author[1,2]{Fumiyasu Komaki}
	
	\author[3]{William E. Strawderman}
		
	\affil[1]{The University of Tokyo}
	
	\affil[2]{RIKEN Center for Brain Science} 

	\affil[3]{Rutgers University}

	\date{}
	
	\maketitle

	\begin{abstract}
		We consider estimation of a normal mean matrix under the Frobenius loss.
		Motivated by the Efron--Morris estimator, a generalization of Stein's prior has been recently developed, which is superharmonic and shrinks the singular values towards zero.
		The generalized Bayes estimator with respect to this prior is minimax and dominates the maximum likelihood estimator.
However, here we show that it is inadmissible by using Brown's condition.
		Then, we develop two types of priors that provide improved generalized Bayes estimators and examine their performance numerically.
		The proposed priors attain risk reduction by adding scalar shrinkage or column-wise shrinkage to singular value shrinkage.
Parallel results for Bayesian predictive densities are also given.
\end{abstract}

\section{Introduction}
Suppose that we have independent matrix observations $Y^{(1)},\dots,Y^{(N)} \in \mathbb{R}^{n \times p}$ whose entries are independent normal random variables $Y^{(t)}_{ij} \sim {\rm N} (M_{ij},1)$, where $M \in \mathbb{R}^{n \times p}$ is an unknown mean matrix. 
In the notation of \cite{Gupta}, this is expressed as $Y^{(t)} \sim {\rm N}_{n,p} (M, I_n, I_p)$ for $t=1,\dots,N$, where $I_k$ denotes the $k$-dimensional identity matrix.
We consider estimation of $M$ under the Frobenius loss
\begin{align*}
	l(M,\hat{M}) = \| \hat{M} - M \|_{{\rm F}}^2 = \sum_{a=1}^n \sum_{i=1}^p (\hat{M}_{ai} - M_{ai})^2.
\end{align*}
By sufficiency reduction, it suffices to consider the average ${Y} = (Y^{(1)}+\dots+Y^{(N)})/N \sim {\rm N} (M,I_n,N^{-1}I_p)$ in estimation of $M$. 
We assume $n-p-1>0$ in the following.
Note that vectorization reduces this problem to estimation of a normal mean vector ${\rm vec} (M)$ from ${\rm vec} ({Y}) \sim {\rm N}_{np} ({\rm vec}(M),N^{-1} I_{np})$ under the quadratic loss, which has been well studied in shrinkage estimation theory \citep{shr_book}.

\cite{Efron72} proposed an empirical Bayes estimator:
\begin{align}
	\hat{M}_{{\rm EM}} = Y \left( I_p-\frac{n-p-1}{N} (Y^{\top} Y)^{-1} \right). \label{EM_estimator}
\end{align}
This estimator can be viewed as a generalization of the James--Stein estimator ($p=1$) for a normal mean vector.
\cite{Efron72} showed that $\hat{M}_{{\rm EM}}$ is minimax and dominates the maximum likelihood estimator $\hat{M}=Y$ under the Frobenius loss.
Let $Y = U \Lambda V^{\top}$, $U \in \mathbb{R}^{n \times p}$, $V \in \mathbb{R}^{p \times p}$, $\Lambda = {\rm diag} (\sigma_1(Y), \ldots, \sigma_p(Y))$ be the singular value decomposition of $Y$, where $U^{\top} U = V^{\top} V = I_p$ and
$\sigma_1(Y) \geq \cdots \geq \sigma_p(Y) \geq 0$ are the singular values of $Y$.
\cite{Stein74} pointed out that $\hat{M}_{{\rm EM}}$ does not change the singular vectors but shrinks the singular values of $Y$ towards zero:
\begin{align*}
	\hat{M}_{{\rm EM}} = U \hat{\Lambda}_{{\rm EM}} V^{\top}, \quad \hat{\Lambda}_{{\rm EM}} = {\rm diag} (\sigma_1(\hat{M}_{{\rm EM}}), \ldots, \sigma_p(\hat{M}_{{\rm EM}})),
\end{align*}
where
\begin{align*}
	\sigma_i(\hat{M}_{{\rm EM}}) = \left( 1 - \frac{n-p-1}{N \sigma_i(Y)^2} \right) \sigma_i(Y), \quad i=1, \ldots, p.
\end{align*}
See \cite{Tsukuma,Yuasa1,Yuasa12} for recent developments around the Efron--Morris estimator.

As a Bayesian counterpart of $\hat{M}_{{\rm EM}}$, \cite{Matsuda} proposed a singular value shrinkage prior 
\begin{align}
	\pi_{{\rm SVS}} (M)=\det (M^{\top} M)^{-(n-p-1)/2}, \label{SVS}
\end{align} 
and showed that the generalized Bayes estimator $\hat{M}_{{\rm SVS}}$ with respect to $\pi_{{\rm SVS}}$ dominates the maximum likelihood estimator $\hat{M}=Y$ under the Frobenius loss.
This prior can be viewed as a generalization of Stein's prior $\pi(\mu)=\| \mu \|^{2-n}$ for a normal mean vector $\mu$ ($p=1$) by \cite{Stein74}.
Similarly to $\hat{M}_{{\rm EM}}$ in \eqref{EM_estimator} , $\hat{M}_{{\rm SVS}}$ shrinks the singular values towards zero.
Thus, it works well when the true matrix is close to low-rank.
See \cite{Matsuda22} and \cite{Matsuda23a} for details on the risk behavior of $\hat{M}_{{\rm EM}}$ and $\hat{M}_{{\rm SVS}}$.

In this paper, we show that the generalized Bayes estimator with respect to the singular value shrinkage prior $\pi_{{\rm SVS}}$ in \eqref{SVS} is inadmissible under the Frobenius loss.
Then, we develop two types of priors that provide improved generalized Bayes estimators asymptotically.
The first type adds scalar shrinkage while the second type adds column-wise shrinkage.
We conduct numerical experiments and confirm the effectiveness of the proposed priors in finite samples.
We also provide parallel results for Bayesian prediction as well as a similar improvement of the blockwise Stein prior, which was conjectured by \cite{Brown}.

This paper is organized as follows.
In Section~\ref{sec_inad}, we prove the inadmissibility of the generalized Bayes estimator with respect to the singular value shrinkage prior $\pi_{{\rm SVS}}$ in \eqref{SVS}.
In Sections~\ref{sec_scalar} and \ref{sec_column}, we provide two types of priors that asymptotically dominate the singular value shrinkage prior $\pi_{{\rm SVS}}$ in \eqref{SVS} by adding scalar or column-wise shrinkage, respectively. 
Numerical results are also given.
In Section~\ref{sec_pred}, we provide parallel results for Bayesian prediction.
Technical details and similar results for the blockwise Stein prior are given in the Appendix.

\section{Inadmissibility of the singular value shrinkage prior}\label{sec_inad}
Here, we  show that the generalized Bayes estimator with respect to the singular value shrinkage prior $\pi_{{\rm SVS}}$ in \eqref{SVS} is inadmissible under the Frobenius loss.
Since $N$ does not affect admissibility results, we fix $N=1$ for convenience in this section.

For estimation of a normal mean vector under the quadratic loss, \cite{Brown71} derived the following sufficient condition for inadmissibility of generalized Bayes estimators.

\begin{lemma}\citep{Brown71}\label{lem_brown}
	In estimation of $\theta$ from $Y \sim \mathrm{N}_d(\theta,I_d)$ under the quadratic loss, the generalized Bayes estimator of $\theta$ with respect to a prior $\pi(\theta)$ is inadmissible if
	\[
	\int_c^{\infty} {r^{1-d} \underline{m} (r)} {\rm d} r < \infty
	\]
	for some $c>0$, where 
	\[
	\underline{m} (r) = \int \frac{1}{m_{\pi} (y)} {\rm d} U_r(y),
	\]
	\[
	m_{\pi} (y) = \int p(y \mid \theta) \pi(\theta) {\rm d} \theta,
	\]
	\[
	p(y \mid \theta) = \frac{1}{(2 \pi)^{d/2}} \exp \left( -\frac{\| y-\theta \|^2}{2} \right),
	\]
	and $U_r$ is the uniform measure on the sphere of radius $r$ in $\mathbb{R}^d$.
\end{lemma}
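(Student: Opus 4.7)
The plan is to follow Brown's original 1971 strategy: derive a Stein-type representation of the risk difference between the generalized Bayes estimator and a competitor, construct the competitor from a bounded non-constant radial function built directly out of the hypothesized integrability condition, and promote the integrated improvement to inadmissibility via a Blyth-type argument.

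First, I would use Brown's formula $\hat\theta_\pi(y) = y + \nabla \log m_\pi(y)$ (obtained by differentiating $m_\pi$ under the integral sign) and then invoke Stein's identity, which for a smooth vector field $g$ gives
\[ R(\theta,\hat\theta_\pi+g)-R(\theta,\hat\theta_\pi) = E_\theta\!\left[\|g(Y)\|^2 + \frac{2\,\nabla\!\cdot\!(m_\pi(Y)\,g(Y))}{m_\pi(Y)}\right]. \]
Specializing to $g=\nabla h$ for a scalar $h$, the right-hand side becomes a quadratic form in $\nabla h$ that is amenable to spherical reduction, and the task becomes to exhibit an $h$ for which this form is sufficiently negative after averaging.

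Second, I would take $h(y)=H(\|y\|)$ radial and reduce the problem to a one-dimensional computation involving the spherical average of $1/m_\pi$, which is precisely $\underline m(r)$. A natural candidate is
\[ H(r) = \int_c^{r\wedge R} s^{1-d}\,\underline m(s)\,ds, \]
truncated at a large $R$. The hypothesis $\int_c^\infty r^{1-d}\underline m(r)\,dr < \infty$ makes $H$ bounded uniformly in $R$ and non-constant, with $H'(r) = r^{1-d}\underline m(r)$ on $[c,R]$; expanding the bracket in spherical coordinates and using the definition of $\underline m$ to handle the spherical average of $m_\pi^{-1}$, the net radial integrand carries a negative contribution of order $\int_c^R r^{1-d}\underline m(r)\,dr$, with the residual positive pieces (including a boundary term at $r=R$) dominated in absolute value for appropriate truncation.

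The main obstacle is promoting this integrated improvement to inadmissibility, since the latter requires domination uniformly in $\theta$, not merely in Bayes-risk average. Brown handles it via a Blyth-type argument: assuming admissibility of $\hat\theta_\pi$, form proper priors $\pi_n = \pi\cdot h_n^2$ with $h_n$ smooth compactly supported approximants of $H$, apply the identity
\[ \int [R(\theta,\hat\theta_\pi)-R(\theta,\hat\theta_{\pi_n})]\,\pi_n(\theta)\,d\theta = \int \|\hat\theta_\pi(y)-\hat\theta_{\pi_n}(y)\|^2 m_{\pi_n}(y)\,dy, \]
and derive a contradiction from the lower bound for the right-hand side produced by the construction. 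Two delicate technical points remain: (i) the approximation $h_n\to H$ must be carried out so that boundary contributions vanish in the limit, since $\pi$ is improper and $m_\pi$ may be unbounded at the origin or at infinity; and (ii) the resulting Dirichlet-energy-type lower bound must be verified strictly positive, which is exactly what is guaranteed by the non-triviality of the integrability hypothesis (finiteness of $\int_c^\infty r^{1-d}\underline m(r)\,dr$ together with its non-vanishing as $R\to\infty$).
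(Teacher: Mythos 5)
The paper does not prove this lemma; it is a direct citation of Brown (1971), and Lemma~\ref{lem_brown} is used as a black box in the proof of Theorem~\ref{th_inad}. So there is no ``paper's own proof'' to compare against, and your proposal must be evaluated against Brown's original argument.

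Your sketch has the right outer structure (Brown's formula $\hat\theta_\pi = Y + \nabla\log m_\pi(Y)$, Stein's unbiased risk identity, a radial perturbation $g=\nabla h$, and the role of the integrability hypothesis in keeping $H$ bounded), but there are two substantive gaps. First, and most importantly, your ``spherical reduction'' step glosses over the central difficulty: when you expand Stein's identity for a radial $g$, the cross term $2g^{\top}\nabla\log m_\pi$ and the divergence term naturally involve the spherical average of $m_\pi$ itself (essentially $\bar m(r)$), not $\underline m(r)=\int m_\pi^{-1}\,{\rm d}U_r$. These two quantities are not reciprocals of one another — Jensen gives only $\underline m(r)\ge 1/\bar m(r)$ — and the whole point of Brown's condition is that it is phrased in terms of $\underline m$. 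Getting $\underline m$ to appear requires a specific manipulation (a Cauchy--Schwarz/completion-of-square step comparing $\int G^2 m_\pi$ against the boundary contribution of $\int G\,\partial_r m_\pi$), which is the heart of the argument; stating that ``the net radial integrand carries a negative contribution of order $\int_c^R r^{1-d}\underline m(r)\,dr$'' asserts the conclusion rather than deriving it. Second, the Blyth-type machinery you invoke in the final paragraph is the technique for proving \emph{admissibility} (it underlies the companion result, Lemma~\ref{lem_brown2}); it is not needed for inadmissibility, and deploying it here actually weakens the argument, because a Blyth-type identity controls an integrated (Bayes-averaged) risk gap, whereas inadmissibility requires pointwise domination in $\theta$. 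In Brown's proof the inadmissibility direction is established by exhibiting an explicit competitor $\hat\theta_\pi+\nabla h$ whose Stein risk estimate is nonpositive for every $y$ (hence for every $\theta$) and strictly negative on a set of positive measure; no contradiction argument is required once that pointwise bound is in hand. As written, your proposal would at best show that $\hat\theta_\pi$ is not the limit of a particular Blyth sequence, which does not imply inadmissibility.
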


After vectorization, estimation of a normal mean matrix $M$ from $Y \sim {\rm N}_{n,p} (M,I_n,I_p)$ under the Frobenius loss reduces to estimation of a normal mean vector ${\rm vec} (M)$ from ${\rm vec} (Y) \sim {\rm N}_{np} ({\rm vec}(M),I_{np})$ under the quadratic loss.
Then, by using Brown's condition in Lemma~\ref{lem_brown}, we obtain the following.

\begin{theorem}\label{th_inad}
	When $p \geq 2$, the generalized Bayes estimator with respect to $\pi_{{\rm SVS}}$ in \eqref{SVS} is inadmissible under the Frobenius loss.
\end{theorem}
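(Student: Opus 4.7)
The plan is to apply Lemma~\ref{lem_brown} after vectorization, so with $d = np$ the task reduces to establishing
\[
\int_c^{\infty} r^{1-np}\, \underline{m}_{{\rm SVS}}(r)\, {\rm d}r < \infty
\]
for some $c>0$, where $\underline{m}_{{\rm SVS}}(r) = \int m_{{\rm SVS}}(Y)^{-1}\, {\rm d}U_r(Y)$, $U_r$ is the uniform probability measure on the Frobenius sphere $\{Y \in \mathbb{R}^{n \times p} : \|Y\|_{{\rm F}} = r\}$, and the marginal density is
\[
m_{{\rm SVS}}(Y) = (2\pi)^{-np/2} \int \det(M^\top M)^{-(n-p-1)/2}\, \exp\!\left(-\tfrac{1}{2}\|Y-M\|_{{\rm F}}^2\right) {\rm d}M.
\]

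The first and most delicate step is to derive the asymptotic behaviour of $m_{{\rm SVS}}(Y)$ as $\|Y\|_{{\rm F}} \to \infty$. Both $\pi_{{\rm SVS}}$ and the Gaussian likelihood are invariant under $Y \mapsto U Y V^\top$ for $U \in O(n)$ and $V \in O(p)$, so $m_{{\rm SVS}}(Y)$ depends on $Y$ only through its singular values $\sigma_1(Y) \geq \cdots \geq \sigma_p(Y)$. Laplace's method, applied on the region where all $\sigma_i(Y)$ are of order $\|Y\|_{{\rm F}}$ so that $\pi_{{\rm SVS}}$ is smooth and varies slowly on unit scales, should yield $m_{{\rm SVS}}(Y) \sim \pi_{{\rm SVS}}(Y) = \det(Y^\top Y)^{-(n-p-1)/2}$, and hence an upper bound $m_{{\rm SVS}}(Y)^{-1} \leq C \det(Y^\top Y)^{(n-p-1)/2}$. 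Near the singular locus, where $\sigma_p(Y)/\|Y\|_{{\rm F}}$ is small, $m_{{\rm SVS}}$ is continuous and positive so $m_{{\rm SVS}}(Y)^{-1}$ remains bounded, and because the relative $U_r$-measure of this region is small its contribution to $\underline{m}_{{\rm SVS}}(r)$ is negligible.

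Once this uniform bound is in hand, the angular integration is straightforward. Scaling $Y = r\widetilde{Y}$ with $\|\widetilde{Y}\|_{{\rm F}} = 1$ gives $\det(Y^\top Y)^{(n-p-1)/2} = r^{p(n-p-1)} \prod_{i=1}^p \sigma_i(\widetilde{Y})^{n-p-1}$, and the residual spherical integral
\[
\int_{\|\widetilde{Y}\|_{{\rm F}} = 1} \prod_{i=1}^p \sigma_i(\widetilde{Y})^{n-p-1}\, {\rm d}U_1(\widetilde{Y})
\]
is finite because each $\sigma_i(\widetilde{Y}) \leq 1$ and the assumption $n-p-1 \geq 1$ makes the integrand bounded. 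Consequently $\underline{m}_{{\rm SVS}}(r) = O(r^{p(n-p-1)})$.

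Putting the pieces together, the integrand in Brown's condition satisfies
\[
r^{1-np}\, \underline{m}_{{\rm SVS}}(r) = O\!\left(r^{1 - np + p(n-p-1)}\right) = O(r^{1 - p^2 - p}),
\]
which is integrable at infinity precisely when $p^2 + p > 2$, equivalently when $p \geq 2$. The main obstacle is the first step: producing the uniform asymptotic lower bound $m_{{\rm SVS}}(Y) \gtrsim \pi_{{\rm SVS}}(Y)$ in the generic region while simultaneously handling the near-rank-deficient directions. A natural approach is to exploit the orthogonal invariance noted above to reduce $m_{{\rm SVS}}$ to a function of the singular values, then split the $M$-integration into a bulk region where Laplace's method applies and a shrinking neighbourhood of the rank-deficient set that can be controlled using the local integrability of $\pi_{{\rm SVS}}$.
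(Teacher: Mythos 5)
Your proposal has the right overall shape: you correctly reduce the problem to Brown's condition after vectorization and correctly identify the target bound $\underline{m}_{{\rm SVS}}(r) = O(r^{p(n-p-1)})$, which together with $d = np$ gives the convergent integral $\int r^{1-p^2-p}\,{\rm d}r$ for $p \geq 2$. However, the key step — a lower bound on $m_{{\rm SVS}}(Y)$ that is uniform over the sphere $\|Y\|_{\rm F} = r$ — is left at the level of a Laplace heuristic, and the way you patch the near-singular region is not correct as stated. You claim that near the singular locus $m_{{\rm SVS}}(Y)^{-1}$ ``remains bounded,'' but as $r \to \infty$ along rank-deficient directions $m_{{\rm SVS}}(Y)$ still decays (roughly like $r^{-(p-1)(n-p-1)}$), so $m_{{\rm SVS}}(Y)^{-1}$ grows without bound there; what saves the argument is that it grows \emph{more slowly} than the bulk rate $r^{p(n-p-1)}$, not that it is bounded. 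And the appeal to ``small relative $U_r$-measure'' is not by itself a valid way to discard a region where the integrand could in principle be large; one would still have to quantify the competition between measure and integrand. So as written the argument has a genuine gap at precisely the delicate point you yourself flagged.

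The paper sidesteps all of this with a much simpler device. By AM--GM, $\pi_{{\rm SVS}}(M) = \big(\prod_i \sigma_i(M)^2\big)^{-(n-p-1)/2} \geq A_{n,p}\,\|M\|_{\rm F}^{-p(n-p-1)}$ for all $M$, a \emph{global} pointwise lower bound with no singularity issues. Writing $m_{{\rm SVS}}(Y) = {\rm E}[\pi_{{\rm SVS}}(Y+Z)]$ with $Z \sim {\rm N}_{n,p}(O,I_n,I_p)$ and applying the triangle inequality $\|Y+Z\|_{\rm F} \leq \|Y\|_{\rm F} + \|Z\|_{\rm F}$ then gives $m_{{\rm SVS}}(Y) \geq A_{n,p}\,{\rm E}[(\|Y\|_{\rm F}+\|Z\|_{\rm F})^{-p(n-p-1)}] = O(\|Y\|_{\rm F}^{-p(n-p-1)})$, uniformly in direction. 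This produces exactly the bound $\underline{m}_{{\rm SVS}}(r) \leq C r^{p(n-p-1)}$ you were after, with no Laplace analysis, no splitting of the sphere, and no need to control the rank-deficient locus. If you want to salvage your route, you would need to prove a uniform two-region estimate of the form ``$m_{{\rm SVS}}(Y)^{-1} \lesssim r^{p(n-p-1)}$ on the bulk and $m_{{\rm SVS}}(Y)^{-1} \lesssim r^{(p-1)(n-p-1)}$ near the singular locus,'' which is considerably more work than the AM--GM trick; the paper's approach is both shorter and strictly easier to make rigorous.
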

\begin{proof}
	From $n-p-1>0$ and the AM-GM inequality
	\begin{align*}
		\left( \prod_{i=1}^p \sigma_i(M)^2 \right)^{1/p} \leq \frac{1}{p} \sum_{i=1}^p \sigma_i(M)^2,
	\end{align*}
	we have
	\begin{align*}
		\pi_{{\rm SVS}} (M) = \left( \prod_{i=1}^p \sigma_i(M)^2 \right)^{-(n-p-1)/2} &\geq \left( \frac{1}{p} \sum_{i=1}^p \sigma_i(M)^2 \right)^{-p(n-p-1)/2} \\
		&= A_{n,p} \| M \|_{{\rm F}}^{-p(n-p-1)},
	\end{align*}
	where $A_{n,p}=p^{p(n-p-1)/2}$.
	Therefore, 
	\begin{align}
		m_{{\rm SVS}} (Y) &= \int \pi_{{\rm SVS}} (M) p(Y \mid M) {\rm d} M \nonumber \\
		&\geq A_{n,p} \int \| M \|_{{\rm F}}^{-p(n-p-1)} p(Y \mid M) {\rm d} M \nonumber \\
		&= A_{n,p} {\rm E} [\| Y+Z \|_{{\rm F}}^{-p(n-p-1)}] \nonumber \\
		&\geq A_{n,p} {\rm E} [ (\| Y \|_{{\rm F}}+\| Z \|_{{\rm F}})^{-p(n-p-1)}], \label{mSVS}
	\end{align}
	where $Z=M-Y \sim {\rm N}_{n,p}(O,I_n, I_p)$ and we used the triangle inequality.
	As $\| Y \|_{{\rm F}} \to \infty$,
	\begin{align*}
		\| Y \|_{{\rm F}}^{p(n-p-1)} {\rm E} [ (\| Y \|_{{\rm F}}+\| Z \|_{{\rm F}})^{-p(n-p-1)}] = {\rm E} \left[ \left(1 + \frac{\| Z \|_{{\rm F}}}{\| Y \|_{{\rm F}}} \right)^{-p(n-p-1)} \right] \to 1,
	\end{align*}
	which yields
	\begin{align}
		{\rm E} [ (\| Y \|_{{\rm F}}+\| Z \|_{{\rm F}})^{-p(n-p-1)}] = O(\| Y \|_{{\rm F}}^{-p(n-p-1)} ). \label{chi}
	\end{align}
	
	Now, we apply Lemma~\ref{lem_brown} by noting that estimation of a normal mean matrix $M$ from $Y \sim {\rm N}_{n,p} (M,I_n,I_p)$ under the Frobenius loss is equivalent to estimation of a normal mean vector ${\rm vec} (M)$ from ${\rm vec} (Y) \sim {\rm N}_{np} ({\rm vec}(M),I_{np})$ under the quadratic loss.
	Let $U_r$ be the uniform measure on the sphere of radius $r$ in $\mathbb{R}^{n \times p}$, where the Frobenius norm is adopted for radius.
	Then, from \eqref{mSVS} and \eqref{chi},
	\begin{align*}
		\underline{m}_{{\rm SVS}} (r) &= \int \frac{1}{m_{{\rm SVS}} (Y)} {\rm d} U_r(Y) \leq C r^{p(n-p-1)}
	\end{align*}
	for some constant $C$.
	Therefore, since $-p^2-p+1<-1$ when $p \geq 2$,
	\[
	\int_1^{\infty} r^{1-np} \underline{m}_{{\rm SVS}} (r) {\rm d} r \leq C \int_1^{\infty} r^{-p^2-p+1}{\rm d} r < \infty.
	\]
	From Lemma~\ref{lem_brown}, it implies the inadmissibility of the generelized Bayes estimator with respect to $\pi_{{\rm SVS}}$ under the Frobenius loss.
\end{proof}

\section{Improvement by additional scalar shrinkage}\label{sec_scalar}
Here, motivated by the result of \cite{Efron76}, we develop a class of priors for which the generalized Bayes estimators asymptotically dominate that with respect to the singular value shrinkage prior $\pi_{{\rm SVS}}$ in \eqref{SVS}. 
\cite{Efron76} proved that the estimator
\begin{align}
	\hat{M}_{{\rm MEM}} = Y \left( I_p-\frac{n-p-1}{N} (Y^{\top} Y)^{-1} - \frac{p^2+p-2}{N \| Y \|_{\mathrm{F}}^2} I_p \right) \label{MEM_estimator}
\end{align}
dominates $\hat{M}_{{\rm EM}}$ in \eqref{EM_estimator} under the Frobenius loss. 
This estimator shrinks the singular values of $Y$ more strongly than $\hat{M}_{{\rm EM}}$:
\begin{align*}
	\hat{M}_{{\rm MEM}} = U \hat{\Lambda}_{{\rm MEM}} V^{\top}, \quad \hat{\Lambda}_{{\rm MEM}} = {\rm diag} (\sigma_1(\hat{M}_{{\rm MEM}}), \ldots, \sigma_p(\hat{M}_{{\rm MEM}})),
\end{align*}
where
\begin{align}
	\sigma_i (\hat{M}_{{\rm MEM}}) = \left( 1 - \frac{n-p-1}{N \sigma_i(Y)^2} - \frac{p^2+p-2}{N \| Y \|_{\mathrm{F}}^2} \right) \sigma_i(Y), \quad i=1, \ldots, p. \label{MEM_SV}
\end{align}
In other words, $\hat{M}_{{\rm MEM}}$ adds scalar shrinkage to $\hat{M}_{{\rm EM}}$.
\cite{Konno90,Konno91} showed corresponding results in the unknown covariance setting.
By extending these results, \cite{Tsukuma07} derived a general method for improving matrix mean estimators by adding scalar shrinkage. 

Motivated by $\hat{M}_{{\rm MEM}}$ in \eqref{MEM_estimator}, we construct priors by adding scalar shrinkage to $\pi_{{\rm SVS}}$ in \eqref{SVS}:
\begin{align}
	\pi_{{\rm MSVS1}} (M) = \pi_{{\rm SVS}} (M) \| M \|_{{\rm F}}^{-\gamma}, \label{MSVS1} \end{align}
where $\gamma \geq 0$.
Note that \cite{Tsukuma17} studied this type of prior in the context of Bayesian prediction.
Let
\begin{align*}
	m_{{\rm MSVS1}} (Y) = \int p(Y \mid M) \pi_{{\rm MSVS1}} (M) {\rm d} M
\end{align*}
be the marginal density of $Y$ under the prior $\pi_{{\rm MSVS1}} (M)$.

\begin{lemma}\label{lem_MSVS1}
	If $0 \leq \gamma < p^2+p$, then $m_{{\rm MSVS1}} (Y) < \infty$ for every $Y$.
\end{lemma}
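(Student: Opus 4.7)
The only threat to integrability comes from a neighborhood of $M=0$, since the additional factor $\|M\|_{{\rm F}}^{-\gamma}$ is bounded by $1$ away from the origin. Accordingly, I would split the integral according to whether $\|M\|_{{\rm F}} < 1$ or $\|M\|_{{\rm F}} \geq 1$. On $\{\|M\|_{{\rm F}} \geq 1\}$, using $\gamma \geq 0$ we have $\|M\|_{{\rm F}}^{-\gamma} \leq 1$, so the contribution is bounded by $m_{{\rm SVS}}(Y)$, which is finite by the results of \cite{Matsuda} that make $\hat{M}_{{\rm SVS}}$ well-defined. On $\{\|M\|_{{\rm F}} < 1\}$, since $p(Y\mid M)$ is uniformly bounded above in $M$, it suffices to show that
\[
J := \int_{\|M\|_{{\rm F}} < 1} \pi_{{\rm SVS}}(M)\, \|M\|_{{\rm F}}^{-\gamma}\, {\rm d} M < \infty.
\]

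To bound $J$, I would use a polar decomposition $M = rQ$ with $r = \|M\|_{{\rm F}} \in [0,\infty)$ and $Q$ on the Frobenius unit sphere $S \subset \mathbb{R}^{n\times p}$, so that ${\rm d} M = r^{np-1}\, {\rm d} r\, {\rm d}\sigma(Q)$ for the induced surface measure $\sigma$. The homogeneity $\det(M^\top M) = r^{2p}\det(Q^\top Q)$ yields $\pi_{{\rm SVS}}(M) = r^{-p(n-p-1)}\pi_{{\rm SVS}}(Q)$, and substitution gives
\[
J = \left(\int_0^1 r^{\,p^2+p-1-\gamma}\, {\rm d} r\right)\left(\int_{S} \pi_{{\rm SVS}}(Q)\, {\rm d}\sigma(Q)\right),
\]
since $np - 1 - p(n-p-1) - \gamma = p^2 + p - 1 - \gamma$. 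The radial factor is finite precisely when $\gamma < p^2 + p$, which matches the hypothesis and explains the exact threshold.

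What remains is to check that the angular factor is finite, which is the one subtle point because $\pi_{{\rm SVS}}$ blows up along the low-rank locus in $S$. Rather than computing the angular integral directly via the singular value decomposition (which would require a delicate Jacobian analysis), I would deduce its finiteness by applying the same polar decomposition to the annulus $\{1 \leq \|M\|_{{\rm F}} \leq 2\}$. On this bounded set, $p(Y \mid M)$ is bounded below by a positive constant for any fixed $Y$, so the already-invoked finiteness $m_{{\rm SVS}}(Y) < \infty$ forces $\int_{1 \leq \|M\|_{{\rm F}} \leq 2}\pi_{{\rm SVS}}(M)\, {\rm d} M < \infty$; polar coordinates equate this with $\bigl(\int_1^2 r^{p^2+p-1}\, {\rm d} r\bigr)\int_{S}\pi_{{\rm SVS}}(Q)\, {\rm d}\sigma(Q)$, delivering the needed finiteness. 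The nonnegativity of every integrand legitimizes the use of Tonelli's theorem throughout, despite the singularity of $\pi_{{\rm SVS}}$.
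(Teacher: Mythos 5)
Your proof is correct and takes essentially the same route as the paper's: both isolate the neighborhood of $M=O$ (the paper via local integrability at each $M$, you via the split at $\|M\|_{\mathrm{F}}=1$), both perform the same polar factorization $M=rQ$ with ${\rm d}M=r^{np-1}{\rm d}r\,{\rm d}\sigma(Q)$ and arrive at the radial exponent $p^2+p-1-\gamma$, and both conclude finiteness of the angular integral from the known finiteness of $m_{\mathrm{SVS}}$ (i.e.\ the $\gamma=0$ case). Your annulus argument is just a slightly more explicit phrasing of the paper's remark that the $U$-integral is finite ``due to the local integrability of $\pi_{\mathrm{SVS}}$ at $M=O$.''
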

\begin{proof}
	Since $m_{{\rm MSVS1}} (Y)$ is interpreted as the expectation of $\pi_{{\rm MSVS1}} (M)$ under $M \sim {\rm N}_{n,p}(Y,I_n, I_p)$, it suffices to show that $\pi_{{\rm MSVS1}} (M)$ is locally integrable at every $M$.
	
	First, consider $M \neq O$.
	Since 
	\begin{align*}
		m_{{\rm SVS}} (Y) &= \int \pi_{{\rm SVS}} (M) p(Y \mid M) {\rm d} M < \infty
	\end{align*}
	for every $Y$ from Lemma~1 of \cite{Matsuda}, $\pi_{{\rm SVS}} (M)$ is locally integrable at $M$.
	Also, $\| M \|_{{\rm F}}>c$ for some $c>0$ in a neighborhood of $M$.
	Thus, $\pi_{{\rm MSVS1}} (M) = \pi_{{\rm SVS}} (M) \| M \|_{{\rm F}}^{-\gamma}$ is locally integrable at $M$ if $\gamma \geq 0$.
	
	Next, consider $M = O$ and take its neighborhood $A=\{ M \mid \| M \|_{\mathrm{F}} \leq \varepsilon \}$ for $\varepsilon>0$.
	To evaluate the integral on $A$, we use the variable transformation  from $M$ to $(r,U)$, where $r= \| M \|_{\mathrm{F}}$ and $U=M/r$ so that $M = r U$.
	We have ${\rm d} M = r^{np-1} {\rm d} r {\rm d} U$.
	Also, from $\det (M^{\top} M)=r^{2p} \det (U^{\top} U)$,
	\begin{align*}
		\pi_{{\rm MSVS1}} (M) = r^{-p(n-p-1)-\gamma} \det (U^{\top} U)^{-(n-p-1)/2}.
	\end{align*}
	Thus,
	\begin{align*}
		&\int_A \pi_{{\rm MSVS1}} (M) {\rm d} M \\
		=& \int_0^{\varepsilon} r^{-p(n-p-1)-\gamma+np-1} {\rm d} r \int \det (U^{\top} U)^{-(n-p-1)/2} {\rm d} U \\
		=& \int_0^{\varepsilon} r^{p^2+p-\gamma-1} {\rm d} r \int \det (U^{\top} U)^{-(n-p-1)/2} {\rm d} U.
	\end{align*}
The integral with respect to $r$ is finite if $p^2+p-\gamma-1>-1$, which is equivalent to $\gamma < p^2+p$.
	The integral with respect to $U$ is finite due to the local integrability of $\pi_{\mathrm{SVS}}$, which corresponds to $\gamma=0$, at $M=O$.
	Therefore, $\pi_{{\rm MSVS1}} (M)$ is locally integrable at $M=O$ if $0 \leq \gamma<p^2+p$.
	
	Hence, $\pi_{{\rm MSVS1}} (M)$ is locally integrable at every $M$ if $0 \leq \gamma<p^2+p$.
\end{proof}

From Lemma~\ref{lem_MSVS1}, the generalized Bayes estimator with respect to $\pi_{{\rm MSVS1}}$ is well-defined when $0 \leq \gamma < p^2+p$.
We denote it by $\hat{M}_{{\rm MSVS1}}$.

\begin{theorem}\label{th_MSVS1_dom}
	For every $M$,
	\begin{align}
		N^2 ({\rm E}_M [\| \hat{M}_{{{\rm MSVS1}}}-M \|_{{\rm F}}^2] - {\rm E}_M [\| \hat{M}_{{{\rm SVS}}}-M \|_{{\rm F}}^2]) \to \frac{\gamma (\gamma-2p^2-2p+4)}{{\rm tr} (M^{\top} M)} \label{risk_diff}
	\end{align}
	as $N \to \infty$.
	Therefore, if $p \geq 2$ and $0 < \gamma < p^2+p$, then the generalized Bayes estimator with respect to $\pi_{{\rm MSVS1}}$ in \eqref{MSVS1} asymptotically dominates that with respect to $\pi_{{\rm SVS}}$ in \eqref{SVS} under the Frobenius loss.
\end{theorem}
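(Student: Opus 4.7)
The plan is to put each generalized Bayes estimator in Tweedie form $\hat{M}_\pi = Y + N^{-1} \nabla_Y \log m_\pi(Y)$ and apply Stein's unbiased-risk identity on $\mathbb{R}^{n\times p}$, which gives
\begin{align*}
  {\rm E}_M \left[ \| \hat{M}_\pi - M \|_{{\rm F}}^2 \right] = \frac{np}{N} + \frac{1}{N^2}\, {\rm E}_M \left[ 2\Delta \log m_\pi(Y) + \| \nabla \log m_\pi(Y) \|_{{\rm F}}^2 \right],
\end{align*}
where $\nabla$ and $\Delta$ act on $Y$. Subtracting the analogous expression for $\pi_{{\rm SVS}}$ and multiplying by $N^2$, the left-hand side of \eqref{risk_diff} becomes the ${\rm E}_M$-expectation of the difference of these Fisher-type integrands under the two priors.

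Next I would use a Laplace expansion of the marginal. Writing $m_\pi(Y) = {\rm E}_W[\pi(Y + N^{-1/2}W)]$ with $W \sim {\rm N}_{n,p}(O,I_n,I_p)$ and Taylor expanding $\pi$ around $Y$ at a smooth point, one gets $m_\pi(Y) = \pi(Y) + O(N^{-1})$ and, after differentiating under the integral, $\nabla \log m_\pi(Y) \to \nabla \log \pi(Y)$ and $\Delta \log m_\pi(Y) \to \Delta \log \pi(Y)$ pointwise. Together with a dominated-convergence argument for the outer ${\rm E}_M$-expectation, this reduces the left-hand side of \eqref{risk_diff} to the pointwise quantity
\begin{align*}
  \bigl( 2\Delta \log \pi_{{\rm MSVS1}} + \| \nabla \log \pi_{{\rm MSVS1}} \|_{{\rm F}}^2 \bigr) - \bigl( 2\Delta \log \pi_{{\rm SVS}} + \| \nabla \log \pi_{{\rm SVS}} \|_{{\rm F}}^2 \bigr)
\end{align*}
evaluated at $M$.

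The remainder is an explicit calculation. Setting $\phi(M) = \log \| M \|_{{\rm F}}$ so that $\log \pi_{{\rm MSVS1}} = \log \pi_{{\rm SVS}} - \gamma \phi$, the display above expands as $-2\gamma\,\Delta\phi + \gamma^2 \|\nabla\phi\|_{{\rm F}}^2 - 2\gamma\,\langle \nabla\log\pi_{{\rm SVS}}, \nabla\phi \rangle$ under the Frobenius inner product. Direct differentiation yields $\nabla\phi = M/\|M\|_{{\rm F}}^2$, $\|\nabla\phi\|_{{\rm F}}^2 = \|M\|_{{\rm F}}^{-2}$, and $\Delta\phi = (np-2)/\|M\|_{{\rm F}}^2$, while $\partial_{M_{ai}}\log\det(M^\top M) = 2(M(M^\top M)^{-1})_{ai}$ gives $\nabla \log \pi_{{\rm SVS}}(M) = -(n-p-1)\, M(M^\top M)^{-1}$ and therefore $\langle \nabla \log \pi_{{\rm SVS}}, \nabla\phi\rangle = -p(n-p-1)/\|M\|_{{\rm F}}^2$. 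Substituting and collecting terms collapses the expression to $\gamma(\gamma - 2p^2 - 2p + 4)/{\rm tr}(M^\top M)$, matching \eqref{risk_diff}. The dominance claim is then immediate: for $p \geq 2$ and $0 < \gamma < p^2+p$ one has $\gamma - 2p^2 - 2p + 4 < -(p^2+p) + 4 \leq -2 < 0$, so the asymptotic risk difference is strictly negative.

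The main obstacle I foresee is the rigorous justification of the Laplace step and of the interchange of limit with ${\rm E}_M$ at the $N^{-2}$ scale, since $\pi_{{\rm SVS}}$ blows up on the rank-deficient locus and $\pi_{{\rm MSVS1}}$ additionally at $M = O$, so pointwise bounds on $\log m_\pi$ and its derivatives are not globally uniform. The cleanest route is probably to localize to the overwhelmingly likely event that $Y$ lies in a small full-rank neighbourhood of $M$ (using Gaussian concentration at rate $N^{-1/2}$), run the Laplace expansion there, and then use the marginal-finiteness bounds underlying Lemma~\ref{lem_MSVS1} (and its SVS analogue) together with Gaussian tail estimates to discard the complementary event.
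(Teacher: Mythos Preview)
Your proposal is correct and follows essentially the same route as the paper: the paper packages the Tweedie/Stein identity together with the Laplace approximation $m_\pi(Y)=\pi(Y)+o(1)$ into a general lemma (their Lemma~\ref{lem_mat_est}), and then performs exactly the gradient/Laplacian computation you outline, arriving at the same collapse to $\gamma(\gamma-2p^2-2p+4)/{\rm tr}(M^\top M)$. Your caveat about rigor at the Laplace step is fair, but the paper operates at the same level of formality there, simply invoking $m_\pi(y)=\pi(y)+o(1)$ without further justification.
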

\begin{proof}
	Let $K = M^{\top} M$ and $K^{ij}$ be the $(i,j)$th entry of $K^{-1}$.
	From
	\begin{align}
		\frac{\partial K_{jk}}{\partial M_{ai}} = \delta_{ik} M_{aj} + \delta_{ij} M_{ak}, \quad 	\frac{\partial}{\partial K_{ij}} \det K = K^{ij} \det K, \label{diffK}
	\end{align}
	we have
	\begin{align*}
		\frac{\partial}{\partial M_{ai}} \det K & =
		\sum_{j,k} \frac{\partial K_{jk}}{\partial M_{ai}} \frac{\partial}{\partial K_{jk}} \det K
		= 2 \sum_j M_{aj} K^{ij} \det K.
	\end{align*}
	Therefore,
	\begin{align}
		\frac{\partial}{\partial M_{ai}} \log \pi_{{\rm SVS}} (M) = -(n-p-1) \sum_j M_{aj} K^{ij}. \label{nabla1}
	\end{align}
	
	Let $\pi_{{\rm S}} (M) = \| M \|_{{\rm F}}^{-\gamma} = ({\rm tr} K)^{-\gamma/2}$.
	Since
	\begin{align*}
		\frac{\partial}{\partial M_{ai}} {\rm tr} K = 2 M_{ai}
	\end{align*}
	from \eqref{diffK}, we have
	\begin{align}
		\frac{\partial}{\partial M_{ai}} \log \pi_{{\rm S}} (M) &= -\gamma M_{ai} ({\rm tr} K)^{-1}, \label{nabla2} \\
		\frac{\partial^2}{\partial M_{ai}^2} \log \pi_{{\rm S}} (M) &= -\gamma ({\rm tr} K - 2 M_{ai}^2) ({\rm tr} K)^{-2}. \label{nabla3}
	\end{align}
	
	By using \eqref{nabla1}, \eqref{nabla2}, and \eqref{nabla3}, we obtain
	\begin{align*}
		{\rm tr} (\widetilde{\nabla} \log \pi_{{\rm SVS}} (M)^{\top} \widetilde{\nabla} \log \pi_{{\rm S}} (M) ) &= \gamma p (n-p-1) ({\rm tr} K)^{-1},\\
		{\rm tr} (\widetilde{\nabla} \log \pi_{{\rm S}} (M)^{\top} \widetilde{\nabla} \log \pi_{{\rm S}} (M) ) &= \gamma^2 ({\rm tr} K)^{-1},\\
		{\rm tr} (\widetilde{\Delta} \log \pi_{{\rm S}} (M)) &= -\gamma (np-2) ({\rm tr} K)^{-1},
	\end{align*}
	where we used the matrix derivative notations \eqref{matrix_grad} and \eqref{matrix_lap}.
	Therefore, from Lemma~\ref{lem_mat_est},
	\begin{align*}
		& {\rm E}_M [\| \hat{M}_{{\rm MSVS1}}-M \|_{{\rm F}}^2] - {\rm E}_M [\| \hat{M}_{{\rm SVS}}-M \|_{{\rm F}}^2] \\
		= & \frac{1}{N^2} {\rm tr}  ( 2 \widetilde{\nabla} \log \pi_{{\rm SVS}} (M)^{\top} \widetilde{\nabla} \log \pi_{{\rm S}} (M) + \widetilde{\nabla} \log \pi_{{\rm S}} (M)^{\top} \widetilde{\nabla} \log \pi_{{\rm S}} (M) + 2 \widetilde{\Delta} \log \pi_{{\rm S}} (M) ) \\
		& \quad + o(N^{-2}) \\
		= & \frac{1}{N^2} \gamma (\gamma-2p^2-2p+4) ({\rm tr} K)^{-1} + o(N^{-2}).
	\end{align*}
	Hence, we obtain \eqref{risk_diff}.
\end{proof}

From \eqref{risk_diff}, the choice $\gamma = p^2+p-2$ attains the minimum risk among $0 < \gamma < p^2+p$.
Note that $p^2+p-2$ also appears in the singular value decomposition form of the modified Efron--Morris estimator $\hat{M}_{{\rm MEM}}$ in \eqref{MEM_SV}.

Now, we examine the performance of $\pi_{{\rm MSVS1}}$ in \eqref{MSVS1}  by Monte Carlo simulation.
Figure~\ref{fig_MSVS1} plots the Frobenius risk of generalized Bayes estimators with respect to $\pi_{{\rm MSVS1}}$ in \eqref{MSVS1} with $\gamma=p^2+p-2$, $\pi_{{\rm SVS}}$ in \eqref{SVS} and $\pi_{{\rm S}}(M)=\| M \|_{{\rm F}}^{2-np}$, which is Stein's prior on the vectorization of $M$, for $n=10$, $p=3$ and $N=1$.
We computed the generalized Bayes estimators by using the random-walk Metropolis--Hastings algorithm with Gaussian proposal of variance $0.1$.
Note that the Frobenius risk of these estimators depends only on the singular values of $M$ due to the orthogonal invariance.
Similarly to the Efron--Morris estimator and $\pi_{{\rm SVS}}$, $\pi_{{\rm MSVS1}}$ works well when $M$ is close to low-rank.
Also, $\pi_{{\rm MSVS1}}$ attains large risk reduction when $M$ is close to the zero matrix like $\pi_{{\rm S}}$.
Thus, $\pi_{{\rm MSVS1}}$ has the best of both worlds.
Figure~\ref{fig_MSVS1_N10} plots the Frobenius risk for $n=10$, $p=3$ and $N=10$, computed by the random walk Metropolis--Hastings algorithm with proposal variance $0.005$.
The risk behavior is similar to Figure~\ref{fig_MSVS1}.
Figure~\ref{fig_MSVS1_highdim} plots the Frobenius risk for $n=20$, $p=3$ and $N=2$, computed by the random walk Metropolis--Hastings algorithm with proposal variance $0.01$.
Again, the risk behavior is similar to Figure~\ref{fig_MSVS1}.
Note that the value of $np/N=30$ is the same with Figure~\ref{fig_MSVS1}.

\begin{figure}[htbp]
	\centering
\begin{tikzpicture}
\tikzstyle{every node}=[]
	\begin{axis}[
		width=0.45\linewidth,
		xmax=10,xmin=0,
		ymax=30, ymin = 0,
xlabel={$\sigma_1(M)$},ylabel={Frobenius risk},
ylabel near ticks,
]
\addplot[very thick,
		filter discard warning=false, unbounded coords=discard
		] table {
         0    3.4525
1.0000    4.0824
2.0000    5.8827
3.0000    8.3670
4.0000   10.9478
5.0000   13.0296
6.0000   14.4476
7.0000   15.3444
8.0000   15.9608
9.0000   16.3938
10.0000   16.7493
		};
		\addplot[very thick, dashed,
		filter discard warning=false, unbounded coords=discard
		] table {
         0   12.5909
1.0000   12.9309
2.0000   13.8652
3.0000   14.9457
4.0000   15.9165
5.0000   16.6487
6.0000   17.1627
7.0000   17.4602
8.0000   17.6683
9.0000   17.8321
10.0000   17.9455			
		};
		\addplot[very thick, dotted,
		filter discard warning=false, unbounded coords=discard
		] table {
         0    2.8343
1.0000    3.5027
2.0000    5.4240
3.0000    8.2886
4.0000   11.6202
5.0000   14.8473
6.0000   17.6191
7.0000   19.8156
8.0000   21.5455
9.0000   22.9076
10.0000   24.0228
		};
	\end{axis}
\end{tikzpicture} 
\begin{tikzpicture}
\begin{axis}[
	width=0.45\linewidth,
	xmax=10,xmin=0,
	ymax=30, ymin = 0,
xlabel={$\sigma_2(M)$},
ylabel near ticks,
	legend pos=outer north east,
	legend entries={MSVS1,SVS,Stein},
	legend style={legend cell align=left,draw=none,fill=white,fill opacity=0.8,text opacity=1,},
	]
\addplot[very thick, 
	filter discard warning=false, unbounded coords=discard
	] table {
         0   16.7492
1.0000   17.1619
2.0000   18.2525
3.0000   19.5760
4.0000   20.6642
5.0000   21.4750
6.0000   22.0022
7.0000   22.3524
8.0000   22.5942
9.0000   22.7959
10.0000   22.9726
	};
	\addplot[very thick, dashed,
	filter discard warning=false, unbounded coords=discard
	] table {
         0   17.9524
1.0000   18.3660
2.0000   19.4099
3.0000   20.6575
4.0000   21.6377
5.0000   22.3528
6.0000   22.8099
7.0000   23.0850
8.0000   23.2630
9.0000   23.4171
10.0000   23.5403
	};
	\addplot[very thick, dotted,
	filter discard warning=false, unbounded coords=discard
	] table {
         0   24.0147
1.0000   24.0690
2.0000   24.2335
3.0000   24.5040
4.0000   24.7294
5.0000   25.0732
6.0000   25.4171
7.0000   25.7683
8.0000   26.1093
9.0000   26.4639
10.0000   26.8044
	};
\end{axis}
\end{tikzpicture} 
	\caption{Frobenius risk of generalized Bayes estimators for $n=10$, $p=3$ and $N=1$. Left: $\sigma_2=\sigma_3=0$. Right: $\sigma_1=10$, $\sigma_3=0$. solid: $\pi_{{\rm MSVS1}}$ with $\gamma=p^2+p-2$, dashed: $\pi_{{\rm SVS}}$, dotted: Stein's prior $\pi_{{\rm S}}$, Note that the minimax risk is $np/N=30$.}
	\label{fig_MSVS1}
\end{figure}
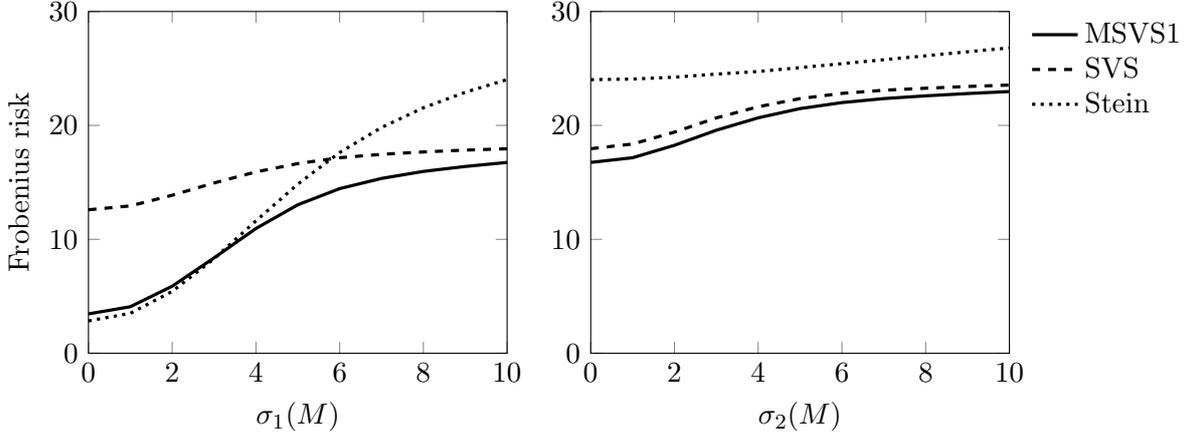
 
\begin{figure}[htbp]
	\centering
\begin{tikzpicture}
\tikzstyle{every node}=[]
	\begin{axis}[
		width=0.45\linewidth,
		xmax=10,xmin=0,
		ymax=3, ymin = 0,
xlabel={$\sigma_1(M)$},ylabel={Frobenius risk},
ylabel near ticks,
]
\addplot[very thick,
		filter discard warning=false, unbounded coords=discard
		] table {
         0    0.2322
1.0000    0.8241
2.0000    1.4425
3.0000    1.6123
4.0000    1.6892
5.0000    1.7232
6.0000    1.7573
7.0000    1.7602
8.0000    1.7784
9.0000    1.7861
10.0000    1.7875
		};
		\addplot[very thick, dashed,
		filter discard warning=false, unbounded coords=discard
		] table {
         0    1.2111
1.0000    1.4644
2.0000    1.6862
3.0000    1.7395
4.0000    1.7685
5.0000    1.7762
6.0000    1.7947
7.0000    1.7874
8.0000    1.7983
9.0000    1.8027
10.0000    1.8010
		};
		\addplot[very thick, dotted,
		filter discard warning=false, unbounded coords=discard
		] table {
         0    0.2178
1.0000    0.8543
2.0000    1.8251
3.0000    2.3261
4.0000    2.5772
5.0000    2.7125
6.0000    2.8027
7.0000    2.8410
8.0000    2.8905
9.0000    2.9128
10.0000    2.9277
		};
	\end{axis}
\end{tikzpicture} 
\begin{tikzpicture}
\begin{axis}[
	width=0.45\linewidth,
	xmax=10,xmin=0,
	ymax=3, ymin = 0,
xlabel={$\sigma_2(M)$},
ylabel near ticks,
	legend pos=outer north east,
	legend entries={MSVS1,SVS,Stein},
	legend style={legend cell align=left,draw=none,fill=white,fill opacity=0.8,text opacity=1,},
	]
\addplot[very thick, 
	filter discard warning=false, unbounded coords=discard
	] table {
         0    1.7893
1.0000    2.0706
2.0000    2.2683
3.0000    2.3524
4.0000    2.3562
5.0000    2.3697
6.0000    2.3695
7.0000    2.3933
8.0000    2.3847
9.0000    2.3919
10.0000    2.3931
	};
	\addplot[very thick, dashed,
	filter discard warning=false, unbounded coords=discard
	] table {
         0    1.8027
1.0000    2.0873
2.0000    2.2789
3.0000    2.3631
4.0000    2.3670
5.0000    2.3786
6.0000    2.3786
7.0000    2.4002
8.0000    2.3925
9.0000    2.3963
10.0000    2.3991
	};
	\addplot[very thick, dotted,
	filter discard warning=false, unbounded coords=discard
	] table {
         0    2.9325
1.0000    2.9242
2.0000    2.9076
3.0000    2.9404
4.0000    2.9294
5.0000    2.9361
6.0000    2.9364
7.0000    2.9611
8.0000    2.9534
9.0000    2.9667
10.0000    2.9662
	};
\end{axis}
\end{tikzpicture} 
	\caption{Frobenius risk of generalized Bayes estimators for $n=10$, $p=3$ and $N=10$. Left: $\sigma_2=\sigma_3=0$. Right: $\sigma_1=10$, $\sigma_3=0$. solid: $\pi_{{\rm MSVS1}}$ with $\gamma=p^2+p-2$, dashed: $\pi_{{\rm SVS}}$, dotted: Stein's prior $\pi_{{\rm S}}$, Note that the minimax risk is $np/N=3$.}
	\label{fig_MSVS1_N10}
\end{figure}
 
\begin{figure}[htbp]
	\centering
\begin{tikzpicture}
\tikzstyle{every node}=[]
	\begin{axis}[
		width=0.45\linewidth,
		xmax=10,xmin=0,
		ymax=30, ymin = 0,
xlabel={$\sigma_1(M)$},ylabel={Frobenius risk},
ylabel near ticks,
]
\addplot[very thick,
		filter discard warning=false, unbounded coords=discard
		] table {
         0    1.5064
1.0000    2.2765
2.0000    4.3552
3.0000    7.0364
4.0000    9.2965
5.0000   10.6595
6.0000   11.5225
7.0000   12.0963
8.0000   12.5319
9.0000   12.8742
10.0000   13.0458
		};
		\addplot[very thick, dashed,
		filter discard warning=false, unbounded coords=discard
		] table {
         0    6.2108
1.0000    6.7915
2.0000    8.1416
3.0000    9.7166
4.0000   11.0062
5.0000   11.8088
6.0000   12.4235
7.0000   12.8396
8.0000   13.1180
9.0000   13.3413
10.0000   13.4536
		};
		\addplot[very thick, dotted,
		filter discard warning=false, unbounded coords=discard
		] table {
         0    1.1970
1.0000    1.9914
2.0000    4.2412
3.0000    7.4924
4.0000   11.0413
5.0000   14.2748
6.0000   16.9695
7.0000   19.1477
8.0000   20.9436
9.0000   22.3908
10.0000   23.4651
		};
	\end{axis}
\end{tikzpicture} 
\begin{tikzpicture}
\begin{axis}[
	width=0.45\linewidth,
	xmax=10,xmin=0,
	ymax=30, ymin = 0,
xlabel={$\sigma_2(M)$},
ylabel near ticks,
	legend pos=outer north east,
	legend entries={MSVS1,SVS,Stein},
	legend style={legend cell align=left,draw=none,fill=white,fill opacity=0.8,text opacity=1,},
	]
\addplot[very thick, 
	filter discard warning=false, unbounded coords=discard
	] table {
         0   13.0943
1.0000   13.6733
2.0000   15.1695
3.0000   16.8648
4.0000   18.1859
5.0000   19.0994
6.0000   19.6036
7.0000   20.0859
8.0000   20.3277
9.0000   20.5722
10.0000   20.7644
	};
	\addplot[very thick, dashed,
	filter discard warning=false, unbounded coords=discard
	] table {
         0   13.5060
1.0000   14.0849
2.0000   15.5451
3.0000   17.1991
4.0000   18.4559
5.0000   19.3258
6.0000   19.8266
7.0000   20.2891
8.0000   20.5176
9.0000   20.7351
10.0000   20.9222
	};
	\addplot[very thick, dotted,
	filter discard warning=false, unbounded coords=discard
	] table {
         0   23.5187
1.0000   23.5464
2.0000   23.6979
3.0000   23.9532
4.0000   24.2116
5.0000   24.6052
6.0000   24.8931
7.0000   25.3754
8.0000   25.6863
9.0000   26.1049
10.0000   26.4724

	};
\end{axis}
\end{tikzpicture} 
	\caption{Frobenius risk of generalized Bayes estimators for $n=20$, $p=3$ and $N=2$. Left: $\sigma_2=\sigma_3=0$. Right: $\sigma_1=10$, $\sigma_3=0$. solid: $\pi_{{\rm MSVS1}}$ with $\gamma=p^2+p-2$, dashed: $\pi_{{\rm SVS}}$, dotted: Stein's prior $\pi_{{\rm S}}$, Note that the minimax risk is $np/N=30$.}
	\label{fig_MSVS1_highdim}
\end{figure}
 
Improvement by additional scalar shrinkage holds even under the matrix quadratic loss \citep{Matsuda22,Matsuda23b}.

\begin{theorem}
	For every $M$,
	\begin{align}
		N^2 ({\rm E}_M [ (\hat{M}_{{{\rm MSVS1}}}-M)^{\top} &(\hat{M}_{{{\rm MSVS1}}}-M) ]- {\rm E}_M [ (\hat{M}_{{{\rm SVS}}}-M)^{\top} (\hat{M}_{{{\rm SVS}}}-M) ]) \nonumber \\
		&\to \gamma ({\rm tr} K)^{-2} (-2(p+1) ({\rm tr} K) I_p + (\gamma+4) K) \label{risk_diff_mat2}
	\end{align}
	as $N \to \infty$.
	Therefore, if $p \geq 2$ and $0 < \gamma < 2p-2$, then the generalized Bayes estimator with respect to $\pi_{{\rm MSVS1}}$ in \eqref{MSVS1} asymptotically dominates that with respect to $\pi_{{\rm SVS}}$ in \eqref{SVS} under the matrix quadratic loss.
\end{theorem}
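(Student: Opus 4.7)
The plan is to mirror the proof of Theorem~\ref{th_MSVS1_dom}, replacing the scalar Frobenius-risk expansion of Lemma~\ref{lem_mat_est} by its matrix-quadratic-loss analog developed in \cite{Matsuda22,Matsuda23b}. For the product prior $\pi_{\mathrm{MSVS1}} = \pi_{\mathrm{SVS}} \cdot \pi_{\mathrm{S}}$ with $\pi_{\mathrm{S}}(M) = \|M\|_{\mathrm{F}}^{-\gamma}$, that analog should give an expansion of the form
\begin{align*}
    & N^{2} \bigl( {\rm E}_M [(\hat{M}_{\mathrm{MSVS1}}-M)^{\top}(\hat{M}_{\mathrm{MSVS1}}-M)] - {\rm E}_M [(\hat{M}_{\mathrm{SVS}}-M)^{\top}(\hat{M}_{\mathrm{SVS}}-M)] \bigr) \\
    & \qquad \to 2\, \widetilde{\nabla} \log \pi_{\mathrm{SVS}}^{\top} \widetilde{\nabla} \log \pi_{\mathrm{S}} + \widetilde{\nabla} \log \pi_{\mathrm{S}}^{\top} \widetilde{\nabla} \log \pi_{\mathrm{S}} + 2\, \widetilde{\Delta} \log \pi_{\mathrm{S}},
\end{align*}
where each summand on the right is now a $p \times p$ matrix rather than its trace. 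The task therefore reduces to evaluating those three matrices in closed form and checking that their combination has the stated sign.

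First I would redo the entry-wise calculations already implicit in the proof of Theorem~\ref{th_MSVS1_dom}, this time keeping the $(i,j)$-entries rather than immediately tracing. Using \eqref{nabla1}, \eqref{nabla2}, and the identity $\sum_k K_{kj} K^{ik} = \delta_{ij}$, summation over $a$ should give
\[
\widetilde{\nabla} \log \pi_{\mathrm{SVS}}^{\top} \widetilde{\nabla} \log \pi_{\mathrm{S}} = \frac{(n-p-1)\gamma}{{\rm tr}\, K}\, I_p, \qquad \widetilde{\nabla} \log \pi_{\mathrm{S}}^{\top} \widetilde{\nabla} \log \pi_{\mathrm{S}} = \frac{\gamma^{2}}{({\rm tr}\, K)^{2}}\, K,
\]
and differentiating \eqref{nabla2} once more and summing over $a$ should give
\[
\widetilde{\Delta} \log \pi_{\mathrm{S}} = -\frac{\gamma n}{{\rm tr}\, K}\, I_p + \frac{2\gamma}{({\rm tr}\, K)^{2}}\, K.
\]
Combining these with coefficients $2, 1, 2$ and simplifying via $2(n-p-1)-2n = -2(p+1)$ and $\gamma^{2}+4\gamma = \gamma(\gamma+4)$ should reproduce the claimed limit matrix $\gamma({\rm tr}\, K)^{-2}\bigl(-2(p+1)({\rm tr}\, K)I_p + (\gamma+4)K\bigr)$ of \eqref{risk_diff_mat2}.

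For the dominance claim I would show that this limit matrix is negative semidefinite whenever $0 < \gamma < 2p - 2$. Since $K \succeq O$ with eigenvalues lying in $[0,\,{\rm tr}\, K]$, every eigenvalue of $-2(p+1)({\rm tr}\, K)I_p + (\gamma+4)K$ is at most $(\gamma+4-2(p+1)){\rm tr}\, K = (\gamma-2p+2){\rm tr}\, K$, which is strictly negative exactly in the stated range; the extremal direction is attained at rank-one $M$. The main obstacle I anticipate is not the algebra but the invocation of the matrix-valued asymptotic expansion from \cite{Matsuda22,Matsuda23b}: one must verify that its hypotheses apply to the improper prior $\pi_{\mathrm{MSVS1}}$ (using the local integrability established in Lemma~\ref{lem_MSVS1}) and that the $o(N^{-2})$ remainder is genuinely negligible as a matrix, not merely in trace. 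Once that is in place, the remainder of the proof is a direct matrix-valued rerun of the scalar computation in Theorem~\ref{th_MSVS1_dom}.
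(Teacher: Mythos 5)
Your proposal is correct and follows essentially the same path as the paper's own proof: compute the three $p\times p$ matrices $\widetilde{\nabla}\log\pi_{\mathrm{SVS}}^{\top}\widetilde{\nabla}\log\pi_{\mathrm{S}}$, $\widetilde{\nabla}\log\pi_{\mathrm{S}}^{\top}\widetilde{\nabla}\log\pi_{\mathrm{S}}$, $\widetilde{\Delta}\log\pi_{\mathrm{S}}$, feed them into the matrix-quadratic analog (Lemma~\ref{lem_mat_est2}), and conclude negative definiteness from $K \preceq (\operatorname{tr} K) I_p$. The three matrix identities and the eigenvalue bound you anticipate are exactly those the paper uses.
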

\begin{proof}
	We use the same notation with the proof of Theorem~\ref{th_MSVS1_dom}.
	By using \eqref{nabla1}, \eqref{nabla2}, and \eqref{nabla3}, we obtain
	\begin{align*}
		(\widetilde{\nabla} \log \pi_{{\rm SVS}} (M)^{\top} \widetilde{\nabla} \log \pi_{{\rm S}} (M) ) &= \gamma (n-p-1) ({\rm tr} K)^{-1} I_p,\\
		(\widetilde{\nabla} \log \pi_{{\rm S}} (M)^{\top} \widetilde{\nabla} \log \pi_{{\rm S}} (M) ) &= \gamma^2 ({\rm tr} K)^{-2} K,\\
		(\widetilde{\Delta} \log \pi_{{\rm S}} (M)) &= -n \gamma ({\rm tr} K)^{-1} I_p +2 \gamma ({\rm tr} K)^{-2} K.
	\end{align*}
	Therefore, from Lemma~\ref{lem_mat_est2},
	\begin{align*}
		& {\rm E}_M [ (\hat{M}_{{{\rm MSVS1}}}-M)^{\top} (\hat{M}_{{{\rm MSVS1}}}-M) ] - {\rm E}_M [ (\hat{M}_{{{\rm SVS}}}-M)^{\top} (\hat{M}_{{{\rm SVS}}}-M) ] \\
		= & \frac{1}{N^2} ( 2 \widetilde{\nabla} \log \pi_{{\rm SVS}} (M)^{\top} \widetilde{\nabla} \log \pi_{{\rm S}} (M) + \widetilde{\nabla} \log \pi_{{\rm S}} (M)^{\top} \widetilde{\nabla} \log \pi_{{\rm S}} (M) + 2 \widetilde{\Delta} \log \pi_{{\rm S}} (M) ) \\
		& \quad + o(N^{-2}) \\
		= & \frac{1}{N^2} \gamma ({\rm tr} K)^{-2} (-2(p+1) ({\rm tr} K) I_p + (\gamma+4) K) + o(N^{-2}).
	\end{align*}
	Hence, we obtain \eqref{risk_diff_mat2}.
	Since $K \preceq ({\rm tr} K) I_p$ from $K \succeq O$, 
	\begin{align*}
		-2(p+1) ({\rm tr} K) I_p + (\gamma+4) K \preceq (\gamma-2p+2) ({\rm tr} K) I_p \prec O
	\end{align*}
	if $0 < \gamma < 2p-2$.
\end{proof}

The generalized Bayes estimator with respect to $\pi_{{\rm MSVS1}}$ in \eqref{MSVS1} attains minimaxity in some cases as follows.

\begin{theorem}
	If $p \geq 2$, $p+2 \leq n < 2p+2-2/p$ and $0 < \gamma \leq -np+2p^2+2p-2$, then the generalized Bayes estimator with respect to $\pi_{{\rm MSVS1}}$ in \eqref{MSVS1} is minimax under the Frobenius loss.
\end{theorem}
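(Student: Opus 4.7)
The plan is to prove minimaxity by showing that the prior $\pi_{\mathrm{MSVS1}}$ itself is superharmonic on $\mathbb{R}^{n\times p}\simeq\mathbb{R}^{np}$ under the stated conditions. Since convolution with the Gaussian density preserves superharmonicity, this will imply that the marginal $m_{\mathrm{MSVS1}}$ is superharmonic; combined with the classical sufficient condition of \cite{Stein74} (applied to the vectorized problem $\operatorname{vec}(Y)\sim\mathrm{N}_{np}(\operatorname{vec}(M),N^{-1}I_{np})$ just as in Section~\ref{sec_inad}), superharmonicity of $m_{\mathrm{MSVS1}}$ yields that $\hat{M}_{\mathrm{MSVS1}}$ dominates the MLE, hence is minimax under the Frobenius loss.

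Setting $L_1=\log\pi_{\mathrm{SVS}}$ and $L_2=\log\pi_{\mathrm{S}}$ with $\pi_{\mathrm{S}}(M)=\|M\|_{\mathrm{F}}^{-\gamma}$, I would use the identity
\[
\frac{\Delta\pi_{\mathrm{MSVS1}}}{\pi_{\mathrm{MSVS1}}} = \operatorname{tr}\bigl(\widetilde{\nabla}(L_1+L_2)^\top\widetilde{\nabla}(L_1+L_2)\bigr) + \operatorname{tr}\bigl(\widetilde{\Delta}(L_1+L_2)\bigr),
\]
which expands into five pieces. Three of them, namely $2\operatorname{tr}(\widetilde{\nabla}L_1^\top\widetilde{\nabla}L_2)=2\gamma p(n-p-1)(\operatorname{tr}K)^{-1}$, $\operatorname{tr}(\widetilde{\nabla}L_2^\top\widetilde{\nabla}L_2)=\gamma^2(\operatorname{tr}K)^{-1}$, and $\operatorname{tr}(\widetilde{\Delta}L_2)=-\gamma(np-2)(\operatorname{tr}K)^{-1}$, are already available from the proof of Theorem~\ref{th_MSVS1_dom}. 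The remaining two involve only $\pi_{\mathrm{SVS}}$ and must be computed anew: starting from \eqref{nabla1} and differentiating $K^{-1}$ via $\partial K^{ij}/\partial M_{ai}=-K^{ij}\sum_k M_{ak}K^{ik}-K^{ii}\sum_l M_{al}K^{lj}$ (which follows from \eqref{diffK}), a careful index computation yields the clean identities $\operatorname{tr}(\widetilde{\nabla}L_1^\top\widetilde{\nabla}L_1) = (n-p-1)^2\operatorname{tr}(K^{-1})$ and $\operatorname{tr}(\widetilde{\Delta}L_1) = -(n-p-1)^2\operatorname{tr}(K^{-1})$, which cancel exactly; in other words, $\pi_{\mathrm{SVS}}$ is log-harmonic on the full-rank locus.

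Adding the five pieces, the expansion collapses to
\[
\frac{\Delta\pi_{\mathrm{MSVS1}}}{\pi_{\mathrm{MSVS1}}} = \frac{\gamma}{\operatorname{tr}K}\bigl(\gamma + np - 2p^2 - 2p + 2\bigr),
\]
which, since $\gamma>0$, is $\le 0$ precisely when $\gamma\le -np+2p^2+2p-2$. The hypothesis $p+2\le n$ ensures $n-p-1>0$ (so that $\pi_{\mathrm{SVS}}$ is well-defined and Lemma~\ref{lem_MSVS1} applies), while $n<2p+2-2/p$ is equivalent to $-np+2p^2+2p-2>0$ and hence guarantees that the stated range of $\gamma$ is non-empty. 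One also verifies that this upper bound is strictly less than $p^2+p$, so that propriety of the marginal follows from Lemma~\ref{lem_MSVS1}.

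The main obstacle is the direct matrix-calculus derivation of the two new identities for $\pi_{\mathrm{SVS}}$, whose exact cancellation is not obvious a priori and requires careful bookkeeping of the mixed index sums arising from differentiating $K^{-1}$. A secondary technical point concerns the singular locus $\{M:\det(M^\top M)=0\}$ on which $\pi_{\mathrm{MSVS1}}$ diverges: superharmonicity there should be interpreted in the distributional sense (as in \cite{Matsuda}), and one must verify that the Gaussian convolution smooths these singularities so that $m_{\mathrm{MSVS1}}$ is classically smooth and superharmonic, which is what Stein's sufficient condition actually requires.
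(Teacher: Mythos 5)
Your proof is correct and reaches the paper's own Laplacian formula, but by a different computational route. The paper establishes
\begin{align*}
\Delta \pi_{\mathrm{MSVS1}}(M) = \gamma(\gamma + np - 2p^2 - 2p + 2)\,\|M\|_{\mathrm{F}}^{-2}\,\pi_{\mathrm{MSVS1}}(M)
\end{align*}
(Proposition~\ref{prop_MSVS1_lap}) by writing $\pi_{\mathrm{MSVS1}}$ as a symmetric function of the singular values and invoking the singular-value Laplacian formula of Lemma~\ref{lem_laplacian}, then concludes minimaxity from superharmonicity exactly as you do. You instead use the logarithmic decomposition $\Delta\pi/\pi = \operatorname{tr}(\widetilde{\nabla}\log\pi^{\top}\widetilde{\nabla}\log\pi) + \operatorname{tr}(\widetilde{\Delta}\log\pi)$, recycle the three cross and pure $\pi_{\mathrm{S}}$ terms already computed in the proof of Theorem~\ref{th_MSVS1_dom}, and show that the two pure $\pi_{\mathrm{SVS}}$ terms cancel because $\pi_{\mathrm{SVS}}$ is harmonic. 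This is precisely the technique the paper itself uses to prove the companion result, Proposition~\ref{prop_MSVS2_lap}, so your route is entirely in the spirit of the paper; it simply applies the MSVS2 method to the MSVS1 case. One simplification worth noting: you flag the harmonicity of $\pi_{\mathrm{SVS}}$, i.e.\ the cancellation $\operatorname{tr}(\widetilde{\nabla}L_1^{\top}\widetilde{\nabla}L_1) + \operatorname{tr}(\widetilde{\Delta}L_1) = 0$, as the main computational obstacle and propose rederiving it from scratch, but this is already available as Theorem~2 of \cite{Matsuda}, which the paper cites for exactly this purpose in the proof of Proposition~\ref{prop_MSVS2_lap}; you can simply cite it and avoid the index bookkeeping. (Also, what you call ``log-harmonic'' is actually plain harmonicity of $\pi_{\mathrm{SVS}}$: the sum you show vanishes is $\Delta\pi_{\mathrm{SVS}}/\pi_{\mathrm{SVS}}$, not $\Delta\log\pi_{\mathrm{SVS}}$.) Your checks on the parameter ranges --- that $p+2\le n$ gives $n-p-1>0$, that $n < 2p+2-2/p$ makes the interval for $\gamma$ non-empty, and that $-np+2p^2+2p-2 < p^2+p$ so Lemma~\ref{lem_MSVS1} applies --- are all correct, and the remark on interpreting superharmonicity distributionally across the singular locus is a reasonable technical caveat consistent with \cite{Matsuda}.
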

\begin{proof}
	From Proposition~\ref{prop_MSVS1_lap}, 
	\begin{align*}
		\Delta \pi_{{\rm MSVS1}} (M) &= \gamma (\gamma+np-2p^2-2p+2) \| M \|_{{\rm F}}^{-2} \pi_{{\rm MSVS1}} (M) \leq 0.
	\end{align*}
	Thus, $\pi_{{\rm MSVS1}} (M)$ is superharmonic, which indicates the minimaxity of the generalized Bayes estimator with respect to $\pi_{{\rm MSVS1}}$ in \eqref{MSVS1} under the Frobenius loss from Stein's classical result \citep{Stein74,Matsuda}.
\end{proof}

It is an interesting problem whether the generalized Bayes estimator with respect to $\pi_{{\rm MSVS1}}$ in \eqref{MSVS1} attains admissibility or not.
In addition to Lemma~\ref{lem_brown}, \cite{Brown71} derived the following sufficient condition for admissibility of generalized Bayes estimators, which may be useful here.
While the condition \eqref{Brown_adm} can be verified by using a similar argument to Theorem~\ref{th_inad}, the verification of the uniform boundedness of $\| \nabla \log m_{\pi}(y) \|$ seems difficult.
We leave further investigation for future work.

\begin{lemma}\citep{Brown71}\label{lem_brown2}
	In estimation of $\theta$ from $Y \sim \mathrm{N}_d(\theta,I_d)$ under the quadratic loss, the generalized Bayes estimator of $\theta$ with respect to a prior $\pi(\theta)$ is admissible if $\| \nabla \log m_{\pi}(y) \|$ is uniformly bounded and
	\begin{align}
		\int_c^{\infty} \frac{r^{1-d}}{\widebar{m} (r)} {\rm d} r = \infty, \label{Brown_adm}
	\end{align}
	where 
	\[
	\widebar{m} (r) = \int {m_{\pi} (y)} {\rm d} U_r(y)
	\]
	and $U_r$ is the uniform measure on the sphere of radius $r$ in $\mathbb{R}^d$.
\end{lemma}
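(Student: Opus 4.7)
The plan is to establish this classical sufficient condition of Brown via Blyth's method, the standard technique for admissibility proofs of generalized Bayes estimators in Gaussian location problems. The argument proceeds by contradiction and leans on Brown's representation of the generalized Bayes estimator, $\delta_\pi(y) = y + \nabla \log m_\pi(y)$.

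First, I would suppose for contradiction that $\delta_\pi$ is dominated by some estimator $\delta$, so that $R(\theta,\delta) \le R(\theta,\delta_\pi)$ everywhere, with strict inequality (and continuity of the risks) on an open set $A \subset \mathbb{R}^d$. This gives a lower bound of the form $B(\pi_n,\delta) - B(\pi_n,\delta_\pi) \le -c \int_A \pi_n(\theta)\,d\theta$ for any sequence of proper priors $\pi_n$, where $B(\pi,\delta)$ denotes the Bayes risk of $\delta$ under $\pi$.

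Second, I would take approximating proper priors of the product form $\pi_n(\theta) = \pi(\theta) h_n^2(\|\theta\|)$ with radial cutoffs $h_n$ that vanish outside a large ball and equal one on a ball containing $A$. Using Brown's identity together with integration by parts, standard manipulations express the Bayes risk difference as
\[
B(\pi_n,\delta_\pi) - B(\pi_n,\delta_{\pi_n}) \le C \int_0^\infty h_n'(r)^2\, \widebar{m}(r)\, r^{d-1}\, dr.
\]
Combining with $B(\pi_n,\delta_{\pi_n}) \le B(\pi_n,\delta)$ (the defining property of the Bayes estimator of $\pi_n$) yields $c\int_A \pi_n(\theta)\,d\theta \le C \int_0^\infty h_n'(r)^2\, \widebar{m}(r)\, r^{d-1}\, dr$.

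Third, I would optimize over the cutoff $h_n$. By Cauchy--Schwarz, subject to the boundary conditions imposed, the infimum of the right-hand side is (up to constants) proportional to $\bigl(\int_c^{r_n} r^{1-d}/\widebar{m}(r)\,dr\bigr)^{-1}$, which vanishes as $r_n \to \infty$ thanks to the hypothesis \eqref{Brown_adm}. Since the left-hand side stays bounded away from zero, one obtains the desired contradiction.

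The main obstacle, and the point where the uniform boundedness of $\|\nabla \log m_\pi\|$ enters, is in the second step. To carry out the integration by parts legitimately for an arbitrary competing estimator $\delta$, one typically truncates $\delta$ so that $\delta(y) - y$ is uniformly bounded, and then justifies the identity with boundary terms controlled at infinity. The hypothesis that $\|\nabla \log m_\pi\|$ is uniformly bounded guarantees that $\delta_\pi(y) - y$ is itself bounded, so the comparison between $\delta_\pi$, $\delta_{\pi_n}$, and the truncated $\delta$ does not blow up as $h_n$ expands to cover all of $\mathbb{R}^d$, and the passage to the limit in Blyth's method is legitimate.
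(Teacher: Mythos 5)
The paper does not prove this lemma; it simply cites \cite{Brown71} and uses it as a black box, so there is no in-paper proof to compare against. Your sketch reconstructs the standard Blyth-method argument that Brown himself uses, and the overall skeleton is right: contradiction via a dominating estimator $\delta$, the proper-prior sequence $\pi_n=\pi\cdot h_n^2$ with radial cutoffs, the Bayes-risk gap identity $B(\pi_n,\delta_\pi)-B(\pi_n,\delta_{\pi_n})=\int\|\nabla\log m_\pi-\nabla\log m_{\pi_n}\|^2 m_{\pi_n}\,{\rm d}y$, reduction to $\int h_n'(r)^2\,\widebar m(r)\,r^{d-1}\,{\rm d}r$, and the Cauchy--Schwarz variational bound that makes this quantity $\asymp\bigl(\int_c^{r_n} r^{1-d}/\widebar m(r)\,{\rm d}r\bigr)^{-1}\to 0$. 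Two places are stated more loosely than the actual argument requires. First, the passage from the gap identity to the integral in $h_n'$ and $\widebar m$ is not a formal substitution: $m_{\pi_n}$ is a convolution, not the pointwise product $m_\pi h_n^2$, so one must control the error $\nabla\log m_{\pi_n}-\bigl(\nabla\log m_\pi+2\nabla\log h_n\bigr)$, and that control is exactly where the uniform boundedness of $\nabla\log m_\pi$ does its work (it also guarantees $R(\theta,\delta_\pi)$ is uniformly bounded, which is needed before one can even invoke the contradiction setup). You instead attribute the boundedness hypothesis mainly to justifying integration by parts for the competitor $\delta$, which is not its primary role in Brown's proof and would leave the central estimate unjustified. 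Second, the domination hypothesis gives $R(\theta,\delta)\le R(\theta,\delta_\pi)$ with strict inequality at some point; lower semicontinuity of the risk difference in $\theta$ must be argued before one can claim a positive gap on an open set $A$. These are precision issues rather than wrong turns: the route you chose is Brown's, and the structure you lay out would become a proof once the approximation step and the regularity of the risk are handled with care.
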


\section{Improvement by additional column-wise shrinkage}\label{sec_column}
Here, instead of scalar shrinkage, we consider priors with additional column-wise shrinkage: \begin{align}
	\pi_{{\rm MSVS2}} (M) = \pi_{{\rm SVS}} (M) \prod_{i=1}^p \| M_{\cdot i} \|^{-\gamma_i}, \label{MSVS2}
\end{align}
where $\gamma_i \geq 0$ for every $i$ and $\| M_{\cdot i} \|$ denotes the norm of the $i$-th column vector of $M$.
Let
\begin{align*}
	m_{{\rm MSVS2}} (Y) = \int p(Y \mid M) \pi_{{\rm MSVS2}} (M) {\rm d} M
\end{align*}
be the marginal density of $Y$ under the prior $\pi_{{\rm MSVS2}} (M)$.

\begin{lemma}\label{lem_MSVS2}
	If $0 \leq \gamma_i \leq p$ for every $i$, then $m_{{\rm MSVS2}} (Y) < \infty$ for every $Y$.
\end{lemma}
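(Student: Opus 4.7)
My plan is to mirror the proof of Lemma~\ref{lem_MSVS1}: since $m_{{\rm MSVS2}}(Y)$ equals the expectation of $\pi_{{\rm MSVS2}}(M)$ under $M\sim{\rm N}_{n,p}(Y,I_n,I_p)$, it suffices to show that $\pi_{{\rm MSVS2}}$ is locally integrable at every $M^*\in\mathbb{R}^{n\times p}$. Let $T=\{i:M_{\cdot i}^*=0\}$. If $T=\emptyset$, then $\prod_{i=1}^{p}\|M_{\cdot i}\|^{-\gamma_i}$ is bounded on a neighborhood of $M^*$, so local integrability of $\pi_{{\rm MSVS2}}$ reduces to that of $\pi_{{\rm SVS}}$, which is Lemma~1 of \cite{Matsuda}. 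The substantive case is $T\neq\emptyset$.

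For $T\neq\emptyset$, I would put column-wise polar coordinates $M_{\cdot i}=r_iU_i$ with $U_i\in S^{n-1}$ only on $i\in T$ (giving Jacobian $\prod_{i\in T}r_i^{n-1}$), and keep $B:=[M_{\cdot i}]_{i\notin T}$ as ordinary Lebesgue variables in a bounded neighborhood $N$ of $B^*$, which has full column rank (the subcase $T=\{1,\dots,p\}$, i.e.\ $M^*=O$, is handled by the same formulas with $B$ vacuous). The algebraic key is the block determinant identity
\[
\det(M^\top M)=\det(B^\top B)\det(A^\top P_B^\perp A)=\det(B^\top B)\Bigl(\prod_{i\in T}r_i^2\Bigr)\det\bigl(\tilde U^\top P_B^\perp \tilde U\bigr),
\]
where $A=[M_{\cdot i}]_{i\in T}=\tilde U\,{\rm diag}(r_i)_{i\in T}$, $\tilde U=[U_i]_{i\in T}$, and $P_B^\perp=I_n-B(B^\top B)^{-1}B^\top$. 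Combining the Jacobian with $r_i^{-(n-p-1)}$ from $\pi_{{\rm SVS}}$ and $r_i^{-\gamma_i}$ from the column-norm factor, and absorbing $\det(B^\top B)^{-(n-p-1)/2}\prod_{i\notin T}\|M_{\cdot i}\|^{-\gamma_i}$ (bounded on $N$) into a constant $C$, the integral of $\pi_{{\rm MSVS2}}$ over the neighborhood is bounded by
\[
C\prod_{i\in T}\Bigl(\int_0^\varepsilon r_i^{p-\gamma_i}\,dr_i\Bigr)\cdot J,\qquad J:=\int_N\int_{(S^{n-1})^{|T|}}\det\bigl(\tilde U^\top P_B^\perp \tilde U\bigr)^{-(n-p-1)/2}\prod_{i\in T}dU_i\,dB.
\]
The radial integrals converge because $\gamma_i\leq p<p+1$.

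The main obstacle is finiteness of $J$. My plan is to extract $J$ as a factor from $\int_\Omega\pi_{{\rm SVS}}(M)\,dM$ on the compact set $\Omega=\{M:\|M_{\cdot i}\|\in[1,2]\text{ for }i\in T,\ M_{\cdot i}\in N\text{ for }i\notin T\}$. Local integrability of $\pi_{{\rm SVS}}$ at every point (Lemma~1 of \cite{Matsuda}) plus compactness gives $\int_\Omega\pi_{{\rm SVS}}(M)\,dM<\infty$, while the identical column-polar substitution on $\Omega$ yields
\[
\int_\Omega\pi_{{\rm SVS}}(M)\,dM=\Bigl(\int_1^2r^p\,dr\Bigr)^{|T|}\int_N\det(B^\top B)^{-(n-p-1)/2}\int_{(S^{n-1})^{|T|}}\det\bigl(\tilde U^\top P_B^\perp \tilde U\bigr)^{-(n-p-1)/2}\prod_{i\in T}dU_i\,dB.
\]
Shrinking $N$ so that $\det(B^\top B)$ is bounded below by a positive constant, finiteness of the left-hand side forces $J<\infty$, which completes the argument.
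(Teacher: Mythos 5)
Your case analysis over $T=\{i:M^*_{\cdot i}=0\}$ is more explicit than the paper's, but it contains a gap: you assert that $B^*=[M^*_{\cdot i}]_{i\notin T}$ has full column rank, which is false in general. For example, with $n=5$, $p=3$ and $M^*=[e_1,e_1,0]$ (so $n-p-1>0$ holds), $T=\{3\}$ and $B^*=[e_1,e_1]$ has rank one. When $B^*$ is rank-deficient, $\det(B^\top B)^{-(n-p-1)/2}$ is unbounded on every neighborhood $N$ of $B^*$, so it cannot be absorbed into the constant $C$, and the closing step of ``shrinking $N$ so that $\det(B^\top B)$ is bounded below by a positive constant'' is impossible. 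That step is also logically backwards: to deduce $J<\infty$ from finiteness of $\int_\Omega\pi_{{\rm SVS}}\,{\rm d}M$ you need $\det(B^\top B)^{-(n-p-1)/2}$ bounded \emph{below}, i.e.\ $\det(B^\top B)$ bounded \emph{above}, which already holds on any bounded $N$ by continuity, so no shrinking is needed. The repair is to leave $\det(B^\top B)^{-(n-p-1)/2}$ inside the $B$-integral; the quantity you then extract from the finite integral $\int_\Omega\pi_{{\rm SVS}}\,{\rm d}M$ is exactly the one that bounds $\int\pi_{{\rm MSVS2}}$ near $M^*$, and the argument closes.

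The paper's proof is shorter and sidesteps the issue entirely. It puts polar coordinates on all $p$ columns simultaneously (not only those indexed by $T$), obtaining the full factorization $\pi_{{\rm MSVS2}}(M)=\prod_{i=1}^p r_i^{-(n-p-1)-\gamma_i}\det(U^\top U)^{-(n-p-1)/2}$, and shows $\int_{\{\|M\|_{\rm F}\le\varepsilon\}}\pi_{{\rm MSVS2}}\,{\rm d}M<\infty$ for every $\varepsilon>0$: the radial factor is handled by a further spherical change of variables in $r=(r_1,\dots,r_p)$ (giving the conditions $\sum_i\gamma_i<p^2+p$ and $\gamma_i\le p$), and the angular factor $\int\det(U^\top U)^{-(n-p-1)/2}\prod_i{\rm d}u_i$ is finite by comparison with the $\gamma=0$ case $\pi_{{\rm SVS}}$. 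Since balls about the origin of arbitrary radius exhaust $\mathbb{R}^{n\times p}$, this already gives local integrability at \emph{every} $M^*$, with no split on which columns of $M^*$ vanish and no Schur-complement bookkeeping.
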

\begin{proof}
	Similarly to Lemma~\ref{lem_MSVS1}, it suffices to show that $\pi_{{\rm MSVS2}} (M)$ is locally integrable at $M=O$.
	Consider the neighborhood of $M=O$ defined by $A=\{ M \mid \| M \|_{\mathrm{F}} \leq \varepsilon \}$ for $\varepsilon>0$.
	To evaluate the integral on $A$, we use the variable transformation  from $M$ to $(r_1,\dots,r_p,u_1,\dots,u_p)$, where each $r_i \in [0, \infty)$ and $u_i \in \mathbb{R}^n$ with $\| u_i \| = 1$ are defined by $r_i= \| M_{\cdot i} \|$ and $u_i=M_{\cdot i}/r_i$ for the $i$-th column vector $M_{\cdot i}$ of $M$ so that $M_{\cdot i} = r_i u_i$ (polar coordinate).
	Since ${\rm d} M_{\cdot i} = r_i^{n-1} {\rm d} r_i {\rm d} u_i$,
	\begin{align*}
		{\rm d}M = r_1^{n-1} \dots r_p^{n-1} {\rm d} r_1 \dots {\rm d} r_p {\rm d} u_1  \dots {\rm d} u_p.
	\end{align*}
	Also, from $M^{\top} M = D (U^{\top} U) D $ with $D={\rm diag} (r_1,\dots,r_p)$ and $U=(u_1 \dots u_p)$,
	\begin{align*}
		\pi_{{\rm MSVS2}} (M) &= \det (D)^{-(n-p-1)} \det (U^{\top} U)^{-(n-p-1)/2} \prod_{i=1}^p r_i^{-\gamma_i} \\
		&= \prod_{i=1}^p r_i^{-(n-p-1)-\gamma_i} \cdot \det (U^{\top} U)^{-(n-p-1)/2}.
	\end{align*}
	Thus,
	\begin{align}
		&\int_A \pi_{{\rm MSVS2}} (M) {\rm d} M \nonumber \\
		=& \int_{\| r \| \leq \varepsilon} \left( \prod_{i=1}^p r_i^{p-\gamma_i} \right) {\rm d} r_1 \dots {\rm d} r_p \int \det (U^{\top} U)^{-(n-p-1)/2} {\rm d} u_1 \dots {\rm d} u_p. \label{MSVS2int}
	\end{align}
	By variable transformation from $r=(r_1,\dots,r_p)$ to $s=\| r \|$ and $v=r/s$, the first integral in \eqref{MSVS2int} is reduced to
	\begin{align*}
		\int_0^\varepsilon s^{p^2-\sum_{i=1}^p \gamma_i+p-1} {\rm d} s \int_{\| v \| =1} \left( \prod_{i=1}^p v_i^{p-\gamma_i} \right) {\rm d} v.
	\end{align*}
	The integral with respect to $s$ is finite if $p^2-\sum_{i=1}^p \gamma_i+p-1>-1$, which is equivalent to $\sum_{i=1}^p \gamma_i < p^2+p$.
	The integral with respect to $v$ is finite if $p-\gamma_i \geq 0$ for every $i$.
	On the other hand, the second integral in \eqref{MSVS2int} is finite due to the local integrability of $\pi_{\mathrm{SVS}}$, which corresponds to $\gamma=0$, at $M=O$.
	Therefore, $\pi_{{\rm MSVS2}} (M)$ is locally integrable at $M=O$ if $0 \leq \gamma_i \leq p$ for every $i$.
\end{proof}

From Lemma~\ref{lem_MSVS2}, the generalized Bayes estimator with respect to $\pi_{{\rm MSVS2}}$ is well-defined when $0 \leq \gamma \leq p$.
We denote it by $\hat{M}_{{\rm MSVS2}}$.

\begin{theorem}\label{th_MSVS2_dom}
	For every $M$,
	\begin{align}
		N^2 ({\rm E}_M [\| \hat{M}_{{{\rm MSVS2}}}-M \|_{{\rm F}}^2] - {\rm E}_M [\| \hat{M}_{{{\rm SVS}}}-M \|_{{\rm F}}^2]) \to \sum_{i=1}^p \gamma_i (\gamma_i-2p+2) \| M_{\cdot i} \|^{-2} \label{risk_diff2}
	\end{align}
	as $N \to \infty$.
	Therefore, if $p \geq 2$ and $0 < \gamma_i \leq p$ for every $i$, then the generalized Bayes estimator with respect to $\pi_{{\rm MSVS2}}$ in \eqref{MSVS2} asymptotically dominates that with respect to $\pi_{{\rm SVS}}$ in \eqref{SVS} under the Frobenius loss.
\end{theorem}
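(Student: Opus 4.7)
The plan is to mirror the proof of Theorem~\ref{th_MSVS1_dom} exactly, but with the scalar factor $\|M\|_{\mathrm F}^{-\gamma}$ replaced by the column-wise factor $\pi_{\mathrm C}(M) = \prod_{i=1}^{p} \|M_{\cdot i}\|^{-\gamma_i}$. Since $\pi_{\mathrm{MSVS2}} = \pi_{\mathrm{SVS}} \cdot \pi_{\mathrm C}$, Lemma~\ref{lem_mat_est} reduces the asymptotic risk difference to the three traces
$\mathrm{tr}\bigl(\widetilde{\nabla}\log\pi_{\mathrm{SVS}}^{\top}\widetilde{\nabla}\log\pi_{\mathrm C}\bigr)$, $\mathrm{tr}\bigl(\widetilde{\nabla}\log\pi_{\mathrm C}^{\top}\widetilde{\nabla}\log\pi_{\mathrm C}\bigr)$, and $\mathrm{tr}\bigl(\widetilde{\Delta}\log\pi_{\mathrm C}\bigr)$, so the proof amounts to computing these three quantities.

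The first step is to record the relevant derivatives. Because $\log\pi_{\mathrm C}(M)$ depends on $M_{ai}$ only through the $i$th column, I get
\[
\frac{\partial}{\partial M_{ai}}\log\pi_{\mathrm C}(M) = -\gamma_i\,\frac{M_{ai}}{\|M_{\cdot i}\|^{2}}, \qquad
\frac{\partial^{2}}{\partial M_{ai}^{2}}\log\pi_{\mathrm C}(M) = -\gamma_i\,\frac{\|M_{\cdot i}\|^{2}-2M_{ai}^{2}}{\|M_{\cdot i}\|^{4}}.
\]
Combined with \eqref{nabla1}, the cross term becomes $\sum_{a,i,j}(n-p-1)M_{aj}K^{ij}\cdot\gamma_i M_{ai}\|M_{\cdot i}\|^{-2}$; using $\sum_{a}M_{aj}M_{ai}=K_{ij}$ and the key identity $\sum_{j}K_{ij}K^{ij}=(KK^{-1})_{ii}=1$, this collapses to $(n-p-1)\sum_i \gamma_i\|M_{\cdot i}\|^{-2}$. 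The self-inner-product is immediately $\sum_{i}\gamma_i^{2}\|M_{\cdot i}\|^{-2}$, and the Laplacian term, after summing over $a$, gives $-(n-2)\sum_i \gamma_i\|M_{\cdot i}\|^{-2}$.

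Substituting into Lemma~\ref{lem_mat_est}, the coefficient of $\|M_{\cdot i}\|^{-2}$ in the risk difference works out to $2(n-p-1)\gamma_i+\gamma_i^{2}-2(n-2)\gamma_i = \gamma_i(\gamma_i-2p+2)$, yielding \eqref{risk_diff2}. For the domination conclusion, note that when $0<\gamma_i\le p$ and $p\ge 2$ we have $\gamma_i-2p+2\le 2-p\le 0$, so every summand is nonpositive, giving asymptotic domination.

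The main obstacle I expect is bookkeeping rather than anything conceptual: carrying the per-column index $i$ through the same derivation that was done for a single scalar $\gamma$, and verifying the algebraic collapse $\sum_j K_{ij}K^{ij}=1$ in the cross term (this is where the interaction between $\pi_{\mathrm{SVS}}$ and $\pi_{\mathrm C}$ is genuinely different from the scalar case and where a sign or index error would propagate). A secondary care point is checking the hypotheses of Lemma~\ref{lem_mat_est} for $\pi_{\mathrm{MSVS2}}$; local integrability at $M=O$ is supplied by Lemma~\ref{lem_MSVS2}, but one should also confirm that the expansion is valid away from the singularities $\|M_{\cdot i}\|=0$, which are negligible for the stated pointwise limit at fixed $M$ with all columns nonzero.
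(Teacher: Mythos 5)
Your proposal is correct and follows exactly the same route as the paper's proof: decompose $\pi_{\mathrm{MSVS2}} = \pi_{\mathrm{SVS}}\cdot\pi_{\mathrm{CS}}$, apply Lemma~\ref{lem_mat_est}, and compute the three trace terms; your explicit verification of the collapse $\sum_j K_{ij}K^{ij}=1$ simply fills in the detail the paper leaves implicit in \eqref{nabla6}.
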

\begin{proof}
	Let
	\begin{align*}
		\pi_{{\rm CS}} (M) = \prod_{i=1}^p \| M_{\cdot i} \|^{-\gamma_i}.
	\end{align*}
	Then,
	\begin{align}
		\frac{\partial}{\partial M_{ai}} \log \pi_{{\rm CS}} (M) = -\gamma_i M_{ai} \| M_{\cdot i} \|^{-2}, \label{nabla4}
	\end{align}
	\begin{align}
		\frac{\partial^2}{\partial M_{ai}^2} \log \pi_{{\rm CS}} (M) = -\gamma_i \left( \| M_{\cdot i} \|^2 - 2 M_{ai}^2 \right) \| M_{\cdot i} \|^{-4}. \label{nabla5}
	\end{align}	
	
	From \eqref{nabla1}, \eqref{nabla4}, and \eqref{nabla5},
	\begin{align}
		{\rm tr} (\widetilde{\nabla} \log \pi_{{\rm SVS}} (M)^{\top} \widetilde{\nabla} \log \pi_{{\rm CS}} (M) ) = (n-p-1) \sum_{i=1}^p \gamma_i \| M_{\cdot i} \|^{-2}, \label{nabla6} \\
		{\rm tr} (\widetilde{\nabla} \log \pi_{{\rm CS}} (M)^{\top} \widetilde{\nabla} \log \pi_{{\rm CS}} (M) ) = \sum_{i=1}^p \gamma_i^2 \| M_{\cdot i} \|^{-2}, \label{nabla7} \\
		{\rm tr} (\widetilde{\Delta} \log \pi_{{\rm CS}} (M)) = -(n-2) \sum_{i=1}^p \gamma_i \| M_{\cdot i} \|^{-2}, \label{nabla8}
	\end{align}
	where we used the matrix derivative notations \eqref{matrix_grad} and \eqref{matrix_lap}.
	Therefore, from Lemma~\ref{lem_mat_est},
	\begin{align*}
		& {\rm E}_M [\| \hat{M}_{{\rm MSVS2}}-M \|_{{\rm F}}^2] - {\rm E}_M [\| \hat{M}_{{\rm SVS}}-M \|_{{\rm F}}^2] \\
		= & \frac{1}{N^2} {\rm tr}  ( 2 \widetilde{\nabla} \log \pi_{{\rm SVS}} (M)^{\top} \widetilde{\nabla} \log \pi_{{\rm CS}} (M) + \widetilde{\nabla} \log \pi_{{\rm CS}} (M)^{\top} \widetilde{\nabla} \log \pi_{{\rm CS}} (M) + 2 \widetilde{\Delta} \log \pi_{{\rm CS}} (M) ) \\
		& \quad + o(N^{-2}) \\
		= & \frac{1}{N^2} \sum_{i=1}^p \gamma_i (\gamma_i-2p+2)  \| M_{\cdot i} \|^{-2} + o(N^{-2}). 	
	\end{align*}
Hence, we obtain \eqref{risk_diff2}.
\end{proof}

From \eqref{risk_diff2}, the choice $\gamma_1=\dots=\gamma_p = p-1$ attains the minimum risk among $0 < \gamma_i \leq p$.

Now, we examine the performance of $\pi_{{\rm MSVS2}}$ in \eqref{MSVS2}  by Monte Carlo simulation.
Figure~\ref{fig_MSVS2} plots the Frobenius risk of generalized Bayes estimators with respect to $\pi_{{\rm MSVS2}}$ in \eqref{MSVS2} with $\gamma_1=\dots=\gamma_p=p-1$, $\pi_{{\rm MSVS1}}$ in \eqref{MSVS1} with $\gamma=p^2+p-2$, $\pi_{{\rm SVS}}$ in \eqref{SVS} and $\pi_{{\rm S}}(M)=\| M \|_{{\rm F}}^{2-np}$, which is Stein's prior on the vectorization of $M$.
We computed the generalized Bayes estimators by using the random walk Metropolis--Hastings algorithm with proposal variance $0.1$.
We set $M=U \Sigma$, where $U^{\top} U=I_p$ and $\Sigma={\rm diag}(\sigma_1,\dots,\sigma_p)$. 
For this $M$, the Frobenius risk of the estimators compared here depends only on the singular values $\sigma_1,\dots,\sigma_p$ of $M$.
Overall, $\pi_{{\rm MSVS2}}$ performs better than $\pi_{{\rm SVS}}$.
Also, $\pi_{{\rm MSVS2}}$ even dominates $\pi_{{\rm MSVS1}}$ and $\pi_{{\rm S}}$ except when $\sigma_1$ is sufficiently small.
This is understood from the column-wise shrinkage effect of $\pi_{{\rm MSVS2}}$.
Figure~\ref{fig_MSVS2_N10} plots the Frobenius risk for $n=10$, $p=3$ and $N=10$, computed by the random walk Metropolis--Hastings algorithm with proposal variance $0.01$.
The risk behavior is similar to Figure~\ref{fig_MSVS2}.
Figure~\ref{fig_MSVS2_highdim} plots the Frobenius risk for $n=20$, $p=3$ and $N=2$, computed by the random walk Metropolis--Hastings algorithm with proposal variance $0.01$.
Again, the risk behavior is similar to Figure~\ref{fig_MSVS2}.
Note that the value of $np/N=30$ is the same with Figure~\ref{fig_MSVS2}.

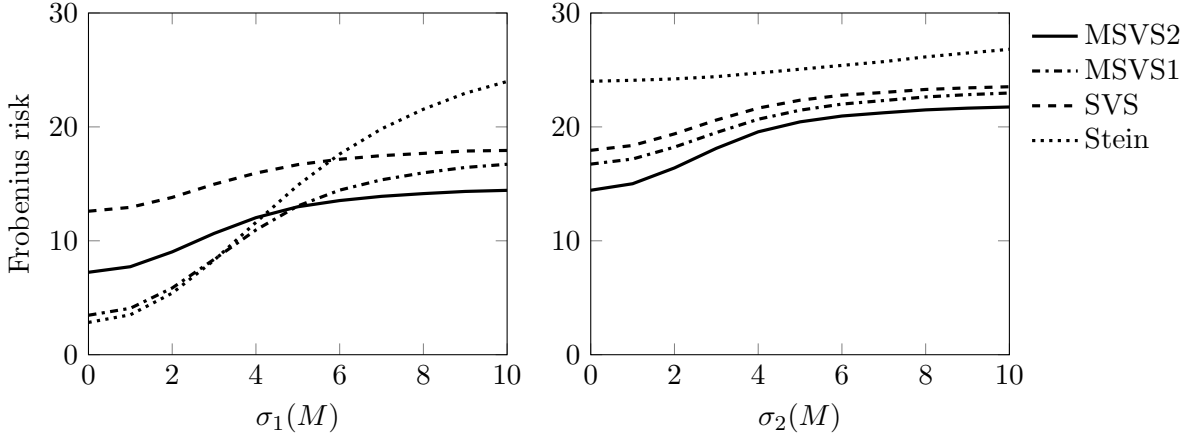
\begin{figure}[htbp]
	\centering
	\begin{tikzpicture}
\tikzstyle{every node}=[]
		\begin{axis}[
			width=0.45\linewidth,
			xmax=10,xmin=0,
			ymax=30, ymin = 0,
xlabel={$\sigma_1(M)$},ylabel={Frobenius risk},
ylabel near ticks,
			]
\addplot[very thick,
			filter discard warning=false, unbounded coords=discard
			] table {
         0    7.2376
1.0000    7.7263
2.0000    9.0292
3.0000   10.6485
4.0000   12.0315
5.0000   12.9888
6.0000   13.5339
7.0000   13.9037
8.0000   14.1437
9.0000   14.3344
10.0000   14.4301
			};
			\addplot[very thick, dash dot,
			filter discard warning=false, unbounded coords=discard
			] table {
         0    3.4656
1.0000    4.0759
2.0000    5.8565
3.0000    8.3702
4.0000   10.9523
5.0000   13.0605
6.0000   14.4480
7.0000   15.3545
8.0000   15.9686
9.0000   16.4397
10.0000   16.7205
			};
			\addplot[very thick, dashed,
			filter discard warning=false, unbounded coords=discard
			] table {
         0   12.5978
1.0000   12.9375
2.0000   13.8103
3.0000   14.9662
4.0000   15.9446
5.0000   16.6890
6.0000   17.1596
7.0000   17.4767
8.0000   17.6752
9.0000   17.8829
10.0000   17.9318
			};
			\addplot[very thick, dotted,
			filter discard warning=false, unbounded coords=discard
			] table {
         0    2.8370
1.0000    3.5037
2.0000    5.4061
3.0000    8.2906
4.0000   11.6186
5.0000   14.8771
6.0000   17.6187
7.0000   19.8293
8.0000   21.5501
9.0000   22.9553
10.0000   23.9878
			};
		\end{axis}
	\end{tikzpicture} 
	\begin{tikzpicture}
\tikzstyle{every node}=[]
		\begin{axis}[
			width=0.45\linewidth,
			xmax=10,xmin=0,
			ymax=30, ymin = 0,
xlabel={$\sigma_2(M)$},ylabel near ticks,
			legend pos=outer north east,
			legend entries={MSVS2,MSVS1,SVS,Stein},
			legend style={legend cell align=left,draw=none,fill=white,fill opacity=0.8,text opacity=1,},
			]
\addplot[very thick,
			filter discard warning=false, unbounded coords=discard
			] table {
         0   14.4381
1.0000   15.0104
2.0000   16.3979
3.0000   18.1193
4.0000   19.5685
5.0000   20.4432
6.0000   20.9555
7.0000   21.2402
8.0000   21.4952
9.0000   21.6454
10.0000   21.7492
			};
			\addplot[very thick, dash dot,
			filter discard warning=false, unbounded coords=discard
			] table {
         0   16.7335
1.0000   17.1809
2.0000   18.2266
3.0000   19.5041
4.0000   20.6708
5.0000   21.4675
6.0000   22.0029
7.0000   22.3146
8.0000   22.6281
9.0000   22.8322
10.0000   22.9755
			};
			\addplot[very thick, dashed,
			filter discard warning=false, unbounded coords=discard
			] table {
         0   17.9455
1.0000   18.3686
2.0000   19.3874
3.0000   20.6019
4.0000   21.6507
5.0000   22.3445
6.0000   22.7839
7.0000   23.0275
8.0000   23.2875
9.0000   23.4274
10.0000   23.5259
			};
			\addplot[very thick, dotted,
			filter discard warning=false, unbounded coords=discard
			] table {
         0   24.0056
1.0000   24.0899
2.0000   24.2154
3.0000   24.4113
4.0000   24.7333
5.0000   25.0638
6.0000   25.3991
7.0000   25.7304
8.0000   26.1471
9.0000   26.4873
10.0000   26.8118
			};
		\end{axis}
	\end{tikzpicture} 
	\caption{Frobenius risk of generalized Bayes estimators for $n=10$, $p=3$ and $N=1$ where $M=U \Sigma$ with $U^{\top} U=I_p$ and $\Sigma={\rm diag}(\sigma_1,\dots,\sigma_p)$. Left: $\sigma_2=\sigma_3=0$. Right: $\sigma_1=10$, $\sigma_3=0$. solid: $\pi_{{\rm MSVS2}}$ with $\gamma_1=\dots=\gamma_p=p-1$, dash-dotted: $\pi_{{\rm MSVS1}}$ with $\gamma=p^2+p-2$, dashed: $\pi_{{\rm SVS}}$, dotted: Stein's prior $\pi_{{\rm S}}$. Note that the minimax risk is $np/N=30$.}
	\label{fig_MSVS2}
\end{figure}
 
\begin{figure}[htbp]
	\centering
	\begin{tikzpicture}
\tikzstyle{every node}=[]
		\begin{axis}[
			width=0.45\linewidth,
			xmax=10,xmin=0,
			ymax=3, ymin = 0,
xlabel={$\sigma_1(M)$},ylabel={Frobenius risk},
ylabel near ticks,
			]
\addplot[very thick,
			filter discard warning=false, unbounded coords=discard
			] table {
         0    0.6307
1.0000    1.0180
2.0000    1.2884
3.0000    1.3642
4.0000    1.3893
5.0000    1.4073
6.0000    1.4058
7.0000    1.4169
8.0000    1.4081
9.0000    1.4283
10.0000    1.4265
			};
			\addplot[very thick, dash dot,
			filter discard warning=false, unbounded coords=discard
			] table {
         0    0.2534
1.0000    0.8315
2.0000    1.4369
3.0000    1.6199
4.0000    1.6911
5.0000    1.7354
6.0000    1.7410
7.0000    1.7667
8.0000    1.7657
9.0000    1.7904
10.0000    1.7898
			};
			\addplot[very thick, dashed,
			filter discard warning=false, unbounded coords=discard
			] table {
         0    1.1999
1.0000    1.4689
2.0000    1.6743
3.0000    1.7484
4.0000    1.7691
5.0000    1.7890
6.0000    1.7794
7.0000    1.7947
8.0000    1.7871
9.0000    1.8074
10.0000    1.8052
			};
			\addplot[very thick, dotted,
			filter discard warning=false, unbounded coords=discard
			] table {
         0    0.2285
1.0000    0.8571
2.0000    1.8176
3.0000    2.3278
4.0000    2.5758
5.0000    2.7206
6.0000    2.7815
7.0000    2.8488
8.0000    2.8704
9.0000    2.9184
10.0000    2.9270
			};
		\end{axis}
	\end{tikzpicture} 
	\begin{tikzpicture}
\tikzstyle{every node}=[]
		\begin{axis}[
			width=0.45\linewidth,
			xmax=10,xmin=0,
			ymax=3, ymin = 0,
xlabel={$\sigma_2(M)$},ylabel near ticks,
			legend pos=outer north east,
			legend entries={MSVS2,MSVS1,SVS,Stein},
			legend style={legend cell align=left,draw=none,fill=white,fill opacity=0.8,text opacity=1,},
			]
\addplot[very thick,
			filter discard warning=false, unbounded coords=discard
			] table {
         0    1.4187
1.0000    1.8449
2.0000    2.0991
3.0000    2.1605
4.0000    2.1795
5.0000    2.1927
6.0000    2.1918
7.0000    2.2052
8.0000    2.1994
9.0000    2.2021
10.0000    2.2005
			};
			\addplot[very thick, dash dot,
			filter discard warning=false, unbounded coords=discard
			] table {
         0    1.7884
1.0000    2.0872
2.0000    2.2822
3.0000    2.3448
4.0000    2.3578
5.0000    2.3739
6.0000    2.3727
7.0000    2.3873
8.0000    2.3781
9.0000    2.3839
10.0000    2.3850
			};
			\addplot[very thick, dashed,
			filter discard warning=false, unbounded coords=discard
			] table {
         0    1.7999
1.0000    2.1008
2.0000    2.2950
3.0000    2.3550
4.0000    2.3700
5.0000    2.3820
6.0000    2.3812
7.0000    2.3962
8.0000    2.3854
9.0000    2.3897
10.0000    2.3907
			};
			\addplot[very thick, dotted,
			filter discard warning=false, unbounded coords=discard
			] table {
         0    2.9279
1.0000    2.9357
2.0000    2.9228
3.0000    2.9367
4.0000    2.9279
5.0000    2.9421
6.0000    2.9357
7.0000    2.9575
8.0000    2.9458
9.0000    2.9533
10.0000    2.9581
			};
		\end{axis}
	\end{tikzpicture} 
	\caption{Frobenius risk of generalized Bayes estimators for $n=10$, $p=3$ and $N=10$ where $M=U \Sigma$ with $U^{\top} U=I_p$ and $\Sigma={\rm diag}(\sigma_1,\dots,\sigma_p)$. Left: $\sigma_2=\sigma_3=0$. Right: $\sigma_1=10$, $\sigma_3=0$. solid: $\pi_{{\rm MSVS2}}$ with $\gamma_1=\dots=\gamma_p=p-1$, dash-dotted: $\pi_{{\rm MSVS1}}$ with $\gamma=p^2+p-2$, dashed: $\pi_{{\rm SVS}}$, dotted: Stein's prior $\pi_{{\rm S}}$. Note that the minimax risk is $np/N=3$.}
	\label{fig_MSVS2_N10}
\end{figure}
 
\begin{figure}[htbp]
	\centering
	\begin{tikzpicture}
\tikzstyle{every node}=[]
		\begin{axis}[
			width=0.45\linewidth,
			xmax=10,xmin=0,
			ymax=30, ymin = 0,
xlabel={$\sigma_1(M)$},ylabel={Frobenius risk},
ylabel near ticks,
			]
\addplot[very thick,
			filter discard warning=false, unbounded coords=discard
			] table {
         0    3.4580
1.0000    4.1146
2.0000    5.7725
3.0000    7.6584
4.0000    9.0821
5.0000   10.0346
6.0000   10.6525
7.0000   11.0442
8.0000   11.3785
9.0000   11.5863
10.0000   11.7054
			};
			\addplot[very thick, dash dot,
			filter discard warning=false, unbounded coords=discard
			] table {
         0    1.4953
1.0000    2.2639
2.0000    4.3492
3.0000    7.0251
4.0000    9.2623
5.0000   10.6756
6.0000   11.5667
7.0000   12.1130
8.0000   12.5553
9.0000   12.8812
10.0000   13.0567
			};
			\addplot[very thick, dashed,
			filter discard warning=false, unbounded coords=discard
			] table {
         0    6.2128
1.0000    6.7650
2.0000    8.1142
3.0000    9.7041
4.0000   10.9588
5.0000   11.8599
6.0000   12.4554
7.0000   12.8489
8.0000   13.1330
9.0000   13.3760
10.0000   13.4415
			};
			\addplot[very thick, dotted,
			filter discard warning=false, unbounded coords=discard
			] table {
         0    1.1859
1.0000    1.9794
2.0000    4.2208
3.0000    7.4770
4.0000   11.0274
5.0000   14.2669
6.0000   17.0075
7.0000   19.1622
8.0000   20.9322
9.0000   22.4048
10.0000   23.4788
			};
		\end{axis}
	\end{tikzpicture} 
	\begin{tikzpicture}
\tikzstyle{every node}=[]
		\begin{axis}[
			width=0.45\linewidth,
			xmax=10,xmin=0,
			ymax=30, ymin = 0,
xlabel={$\sigma_2(M)$},ylabel near ticks,
			legend pos=outer north east,
			legend entries={MSVS2,MSVS1,SVS,Stein},
			legend style={legend cell align=left,draw=none,fill=white,fill opacity=0.8,text opacity=1,},
			]
\addplot[very thick,
			filter discard warning=false, unbounded coords=discard
			] table {
         0   11.7315
1.0000   12.4105
2.0000   14.1606
3.0000   16.1359
4.0000   17.5363
5.0000   18.4278
6.0000   19.0392
7.0000   19.3466
8.0000   19.7282
9.0000   19.8987
10.0000   20.0408
			};
			\addplot[very thick, dash dot,
			filter discard warning=false, unbounded coords=discard
			] table {
         0   13.0643
1.0000   13.6548
2.0000   15.1348
3.0000   16.8697
4.0000   18.1485
5.0000   19.0569
6.0000   19.6719
7.0000   20.0096
8.0000   20.3659
9.0000   20.5606
10.0000   20.7171
			};
			\addplot[very thick, dashed,
			filter discard warning=false, unbounded coords=discard
			] table {
         0   13.4639
1.0000   14.0499
2.0000   15.5257
3.0000   17.1941
4.0000   18.4311
5.0000   19.3059
6.0000   19.8997
7.0000   20.2207
8.0000   20.5587
9.0000   20.7324
10.0000   20.8678
			};
			\addplot[very thick, dotted,
			filter discard warning=false, unbounded coords=discard
			] table {
         0   23.4668
1.0000   23.5384
2.0000   23.6988
3.0000   23.9707
4.0000   24.2344
5.0000   24.5813
6.0000   24.9738
7.0000   25.3125
8.0000   25.7376
9.0000   26.0838
10.0000   26.4161
			};
		\end{axis}
	\end{tikzpicture} 
	\caption{Frobenius risk of generalized Bayes estimators for $n=20$, $p=3$ and $N=2$ where $M=U \Sigma$ with $U^{\top} U=I_p$ and $\Sigma={\rm diag}(\sigma_1,\dots,\sigma_p)$. Left: $\sigma_2=\sigma_3=0$. Right: $\sigma_1=10$, $\sigma_3=0$. solid: $\pi_{{\rm MSVS2}}$ with $\gamma_1=\dots=\gamma_p=p-1$, dash-dotted: $\pi_{{\rm MSVS1}}$ with $\gamma=p^2+p-2$, dashed: $\pi_{{\rm SVS}}$, dotted: Stein's prior $\pi_{{\rm S}}$. Note that the minimax risk is $np/N=30$.}
	\label{fig_MSVS2_highdim}
\end{figure}
 
Improvement by additional column-wise shrinkage holds even under the matrix quadratic loss \citep{Matsuda22,Matsuda23b}.

\begin{theorem}
	For every $M$,
	\begin{align}
		N^2 ({\rm E}_M [ (\hat{M}_{{{\rm MSVS2}}}-M)^{\top} &(\hat{M}_{{{\rm MSVS2}}}-M) ]- {\rm E}_M [ (\hat{M}_{{{\rm SVS}}}-M)^{\top} (\hat{M}_{{{\rm SVS}}}-M) ]) \nonumber \\
		&\to -2(p-1) D + D M^{\top} M D \label{risk_diff_mat3}
	\end{align}
	as $N \to \infty$, where $D={\rm diag}(\gamma_1 \| M_{\cdot 1} \|^{-2},\dots,\gamma_p \| M_{\cdot p} \|^{-2})$.
	Therefore, if $p \geq 2$ and $0 < \gamma_1=\dots=\gamma_p < 2-2/p$, then the generalized Bayes estimator with respect to $\pi_{{\rm MSVS2}}$ in \eqref{MSVS2} asymptotically dominates that with respect to $\pi_{{\rm SVS}}$ in \eqref{SVS} under the matrix quadratic loss.
\end{theorem}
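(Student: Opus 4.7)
The plan is to parallel the proof of Theorem~\ref{th_MSVS2_dom}, but to carry the full $p \times p$ matrix quantities rather than their traces, and to invoke the matrix-valued asymptotic expansion Lemma~\ref{lem_mat_est2} in place of Lemma~\ref{lem_mat_est}. As before, set $\pi_{{\rm CS}}(M) = \prod_{i=1}^p \|M_{\cdot i}\|^{-\gamma_i}$, so that $\pi_{{\rm MSVS2}} = \pi_{{\rm SVS}} \cdot \pi_{{\rm CS}}$.

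The first step is to evaluate, in matrix form, the three $p \times p$ quantities required by Lemma~\ref{lem_mat_est2}. Using \eqref{nabla1}, \eqref{nabla4}, \eqref{nabla5} together with the identity $\sum_j K^{ij} K_{jk} = \delta_{ik}$, I expect
\begin{align*}
\widetilde{\nabla} \log \pi_{{\rm SVS}}(M)^{\top} \widetilde{\nabla} \log \pi_{{\rm CS}}(M) &= (n-p-1)\, D, \\
\widetilde{\nabla} \log \pi_{{\rm CS}}(M)^{\top} \widetilde{\nabla} \log \pi_{{\rm CS}}(M) &= D M^{\top} M D, \\
\widetilde{\Delta} \log \pi_{{\rm CS}}(M) &= -(n-2)\, D,
\end{align*}
where $D = {\rm diag}(\gamma_1 \|M_{\cdot 1}\|^{-2}, \ldots, \gamma_p \|M_{\cdot p}\|^{-2})$. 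The cross term is diagonal because multiplication by $K^{-1}$ collapses the SVS gradient against $M^\top M$, while the Laplacian of $\pi_{{\rm CS}}$ is diagonal because distinct columns of $M$ decouple in $\log \pi_{{\rm CS}}$.

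Next I would combine these three matrices via Lemma~\ref{lem_mat_est2}, obtaining the leading asymptotic $2(n-p-1)D + D M^{\top} M D - 2(n-2)D = -2(p-1)D + D M^{\top} M D$, which is exactly \eqref{risk_diff_mat3}. To verify the dominance claim, write $M_{\cdot i} = \|M_{\cdot i}\| u_i$ with $\|u_i\| = 1$ and collect $U = (u_1, \ldots, u_p)$. Under $\gamma_1 = \cdots = \gamma_p = \gamma$ one has $D^{1/2} M^{\top} M D^{1/2} = \gamma\, U^{\top} U$, so factoring $D^{1/2}$ out on both sides of the limiting matrix yields
\begin{align*}
-2(p-1) D + D M^{\top} M D = D^{1/2} \bigl[-2(p-1) I_p + \gamma\, U^{\top} U \bigr] D^{1/2}.
\end{align*}
Since $U^{\top} U$ is a Gram matrix of unit vectors with trace $p$, a Cauchy--Schwarz argument on $\|Ux\|^2$ gives $U^{\top} U \preceq p\, I_p$, so the bracket is bounded above in Loewner order by $(\gamma p - 2(p-1)) I_p$, which is strictly negative definite precisely when $\gamma < 2 - 2/p$, completing the domination argument.

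The main obstacle is not in the computations, which are routine matrix-valued analogues of those in Theorem~\ref{th_MSVS2_dom}, but in identifying the sharp Loewner bound: the critical constant $2 - 2/p$ comes from $U^{\top} U \preceq p\, I_p$ (with equality when $u_1 = \cdots = u_p$), and a crude coordinate-wise estimate of $D M^{\top} M D$ would yield a suboptimal range for $\gamma$. Restricting to $\gamma_1 = \cdots = \gamma_p$ is precisely what makes $D^{1/2} M^{\top} M D^{1/2}$ proportional to a Gram matrix of unit vectors; with unequal $\gamma_i$, one would need a more delicate argument to obtain a comparable range, which is the reason the theorem is stated only for the equal-$\gamma_i$ case.
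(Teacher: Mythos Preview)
Your derivation of the three $p\times p$ matrices and the application of Lemma~\ref{lem_mat_est2} to obtain \eqref{risk_diff_mat3} is identical to the paper's proof, line for line.

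For the dominance step, your argument differs from the paper's and is in fact the cleaner one. The paper bounds the operator norm via submultiplicativity,
\[
\|D^{1/2}M^{\top}MD^{1/2}\|\le(\max_i D_{ii})\,\|M\|^2=\gamma_1\,\frac{\|M\|^2}{\min_i\|M_{\cdot i}\|^2}\le\gamma_1\,\frac{\sum_i\|M_{\cdot i}\|^2}{\min_i\|M_{\cdot i}\|^2},
\]
and then asserts this last ratio is at most $p$; but that inequality actually goes the wrong way (the ratio is always \emph{at least} $p$), so the paper's chain does not close as written. Your route avoids this entirely: by observing the exact identity $D^{1/2}M^{\top}MD^{1/2}=\gamma\,U^{\top}U$ with $U$ having unit-norm columns, you get $U^{\top}U\preceq pI_p$ immediately from $\mathrm{tr}(U^{\top}U)=p$ and positive semidefiniteness, which yields the sharp threshold $\gamma<2-2/p$ without any slack. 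So your proposal is not only correct but repairs a gap in the published argument; the first half matches the paper, and the second half is a genuinely better proof of the same bound.
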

\begin{proof}
	We use the same notation with the proof of Theorem~\ref{th_MSVS2_dom}.
	By using \eqref{nabla1}, \eqref{nabla4}, and \eqref{nabla5}, we obtain
	\begin{align*}
		\widetilde{\nabla} \log \pi_{{\rm SVS}} (M)^{\top} \widetilde{\nabla} \log \pi_{{\rm CS}} (M) = (n-p-1) D, \\
		\widetilde{\nabla} \log \pi_{{\rm CS}} (M)^{\top} \widetilde{\nabla} \log \pi_{{\rm CS}} (M) = D M^{\top} M D, \\
		\widetilde{\Delta} \log \pi_{{\rm CS}} (M) = -(n-2) D.
	\end{align*}
	Therefore, from Lemma~\ref{lem_mat_est2},
	\begin{align*}
		& {\rm E}_M [ (\hat{M}_{{{\rm MSVS2}}}-M)^{\top} (\hat{M}_{{{\rm MSVS2}}}-M) ] - {\rm E}_M [ (\hat{M}_{{{\rm SVS}}}-M)^{\top} (\hat{M}_{{{\rm SVS}}}-M) ] \\
		= & \frac{1}{N^2} ( 2 \widetilde{\nabla} \log \pi_{{\rm SVS}} (M)^{\top} \widetilde{\nabla} \log \pi_{{\rm CS}} (M) + \widetilde{\nabla} \log \pi_{{\rm CS}} (M)^{\top} \widetilde{\nabla} \log \pi_{{\rm CS}} (M) + 2 \widetilde{\Delta} \log \pi_{{\rm CS}} (M) ) \\
		& \quad + o(N^{-2}) \\
		= & -2(p-1) D + D M^{\top} M D.
	\end{align*}
	Hence, we obtain \eqref{risk_diff_mat3}.
	
	Suppose that $p \geq 2$ and $0 < \gamma_1=\dots=\gamma_p < 2-2/p$.
	Let $\| \cdot \|$ be the operater norm.
	Since $D \succeq O$ is diagonal and $\| M \|^2 \leq \| M \|_{\mathrm{F}}^2=\sum_{i=1}^p \| M_{\cdot i} \|^2$,
	\begin{align*}
		\| D^{1/2} M^{\top} M D^{1/2} \| & \leq \| D^{1/2} \| \| M^{\top} M \| \| D^{1/2} \| \\
		&= ( \max_i D_{ii} ) \| M \|^2 \\
		&= \gamma_1 \frac{\| M \|^2}{\min_i \| M_{\cdot i} \|^2} \\
		&\leq \gamma_1 \frac{\sum_{i=1}^p \| M_{\cdot i} \|^2}{\min_i \| M_{\cdot i} \|^2} \\
		&\leq \gamma_1 p \\
		&< 2(p-1),
	\end{align*}
	which yields $D^{1/2} M^{\top} M D^{1/2} \prec 2(p-1) I_p$.
	Therefore, 
	\begin{align*}
		-2(p-1) D + D M^{\top} M D = D^{1/2} (-2(p-1) I_p + D^{1/2} M^{\top} M D^{1/2}) D^{1/2} \prec O.
	\end{align*}
\end{proof}

The generalized Bayes estimator with respect to $\pi_{{\rm MSVS2}}$ in \eqref{MSVS2} attains minimaxity in some cases as follows.
It is an interesting future work to investigate its admissibility.

\begin{theorem}
	If $p \geq 3$, $p+2 \leq n < 2p$ and $0 < \gamma \leq -n+2p$, then the generalized Bayes estimator with respect to $\pi_{{\rm MSVS2}}$ in \eqref{MSVS2} is minimax under the Frobenius loss.
\end{theorem}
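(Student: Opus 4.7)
The plan is to parallel the minimaxity argument already used for $\pi_{{\rm MSVS1}}$: establish that $\pi_{{\rm MSVS2}}$ (with $\gamma_1=\dots=\gamma_p=\gamma$) is superharmonic and then invoke Stein's classical sufficient condition for minimaxity \citep{Stein74,Matsuda}. Lemma~\ref{lem_MSVS2} already guarantees that the generalized Bayes estimator is well defined, since $0 < \gamma \leq 2p-n$ automatically gives $\gamma \leq p$ when $n \geq p$.

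The key step is to compute $\Delta \pi_{{\rm MSVS2}}(M)$ on the regular locus (full column rank, no zero columns). Factoring $\pi_{{\rm MSVS2}} = \pi_{{\rm SVS}} \pi_{{\rm CS}}$ with $\pi_{{\rm CS}}(M) = \prod_{i=1}^p \|M_{\cdot i}\|^{-\gamma}$ and using the product-rule identity
\begin{align*}
\frac{\Delta \pi_{{\rm MSVS2}}}{\pi_{{\rm MSVS2}}} = \frac{\Delta \pi_{{\rm SVS}}}{\pi_{{\rm SVS}}} + \frac{\Delta \pi_{{\rm CS}}}{\pi_{{\rm CS}}} + 2 \nabla \log \pi_{{\rm SVS}} \cdot \nabla \log \pi_{{\rm CS}},
\end{align*}
the first term on the right vanishes because $\pi_{{\rm SVS}}$ is harmonic on the full-rank locus (the fact underlying Proposition~\ref{prop_MSVS1_lap}, where for $\pi_{{\rm MSVS1}}$ only the $\pi_{{\rm S}}$ and cross-term pieces survived). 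The remaining two terms are packaged in the proof of Theorem~\ref{th_MSVS2_dom}: applying $\Delta \pi/\pi = \Delta \log \pi + \|\nabla \log \pi\|^2$ to \eqref{nabla4}--\eqref{nabla5} gives $\Delta \pi_{{\rm CS}}/\pi_{{\rm CS}} = \gamma(\gamma - n + 2) \sum_{i=1}^p \|M_{\cdot i}\|^{-2}$, while \eqref{nabla6} supplies $2 \nabla \log \pi_{{\rm SVS}} \cdot \nabla \log \pi_{{\rm CS}} = 2(n-p-1)\gamma \sum_{i=1}^p \|M_{\cdot i}\|^{-2}$. Adding these collapses to the companion formula
\begin{align*}
\Delta \pi_{{\rm MSVS2}}(M) = \gamma(\gamma + n - 2p) \Bigl( \sum_{i=1}^p \|M_{\cdot i}\|^{-2} \Bigr) \pi_{{\rm MSVS2}}(M),
\end{align*}
which I would formalize as an appendix proposition analogous to Proposition~\ref{prop_MSVS1_lap}.

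Under $0 < \gamma \leq -n + 2p$ the prefactor $\gamma + n - 2p$ is nonpositive, so $\Delta \pi_{{\rm MSVS2}} \leq 0$ and $\pi_{{\rm MSVS2}}$ is superharmonic; Stein's criterion then yields minimaxity. The remaining hypotheses play only a combinatorial role: $p+2 \leq n$ delivers harmonicity of $\pi_{{\rm SVS}}$ on the full-rank locus, $n < 2p$ is the only regime admitting a positive $\gamma$ with $\gamma \leq 2p-n$, and combining these two inequalities forces $p \geq 3$. The main subtlety is the usual one around the singular set — rank-deficient $M$ and columns with $M_{\cdot i} = 0$ — where $\pi_{{\rm SVS}}$ is only distributionally superharmonic and the column factors blow up; as for $\pi_{{\rm MSVS1}}$, the singular contributions should be nonpositive, so the full distributional Laplacian stays $\leq 0$.
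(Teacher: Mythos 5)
Your proposal is correct and follows essentially the same route as the paper: you re-derive Proposition~\ref{prop_MSVS2_lap} via the product-rule identity $\Delta(\pi_1\pi_2)/(\pi_1\pi_2)=\Delta\pi_1/\pi_1+\Delta\pi_2/\pi_2+2\nabla\log\pi_1\cdot\nabla\log\pi_2$ together with $\Delta\pi_{\rm SVS}=0$ and \eqref{nabla6}--\eqref{nabla8}, arrive at $\Delta\pi_{\rm MSVS2}=\gamma(\gamma+n-2p)\bigl(\sum_i\|M_{\cdot i}\|^{-2}\bigr)\pi_{\rm MSVS2}\leq 0$, and then invoke Stein's superharmonicity criterion, exactly as the paper does. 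The only cosmetic difference is that you inline what the paper delegates to the appendix proposition, and you flag the singular-set subtlety explicitly, which the paper handles implicitly via the distributional form of Stein's result.
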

\begin{proof}
	From Proposition~\ref{prop_MSVS2_lap}, 
	\begin{align*}
		\Delta \pi_{{\rm MSVS2}} (M) &= \gamma (\gamma+n-2p) \left( \sum_{i=1}^p \| M_{\cdot i} \|^{-2} \right) \pi_{{\rm MSVS2}} (M) \leq 0.
	\end{align*}
	Thus, $\pi_{{\rm MSVS2}} (M)$ is superharmonic, which indicates the minimaxity of the generalized Bayes estimator with respect to $\pi_{{\rm MSVS2}}$ in \eqref{MSVS2} under the Frobenius loss from Stein's classical result \citep{Stein74,Matsuda}.
\end{proof}

\section{Bayesian prediction}\label{sec_pred}
Here, we consider Bayesian prediction and provide parallel results to those in Sections~\ref{sec_scalar} and \ref{sec_column}.
Suppose that we observe $Y \sim {\rm N}_{n,p} (M, I_n, N^{-1} I_p)$ and predict $\widetilde{Y} \sim {\rm N}_{n,p} (M, I_n, I_p)$ by a predictive density $\hat{p}(\widetilde{Y} \mid Y)$. 
We evaluate predictive densities by the Kullback--Leibler loss:
\begin{align*}
	D ({p}(\cdot \mid M), \hat{p} (\cdot \mid Y))
	= \int {p}(\widetilde{Y} \mid M) \log \frac{{p}(\widetilde{Y} \mid M)}{\hat{p} (\widetilde{Y} \mid Y)} {\rm d} \widetilde{Y}. 
\end{align*}
The Bayesian predictive density based on a prior $\pi(M)$ is defined as
\begin{align*}
	\hat{p}_{\pi} (\widetilde{Y} \mid Y) & = \int {p} (\widetilde{Y} \mid M) \pi (M \mid Y) {\rm d} M,
\end{align*}
where $\pi(M \mid Y)$ is the posterior distribution of $M$ given $Y$, and it minimizes the Bayes risk \citep{Aitchison}:
\begin{align*}
	\hat{p}_{\pi} (\widetilde{Y} \mid Y) & = \argmin_{\hat{p}} \int D ({p}(\cdot \mid M), \hat{p} (\cdot \mid Y)) p(Y \mid M) \pi (M) {\rm d} Y {\rm d} M.
\end{align*}
The Bayesian predictive density with respect to the uniform prior is minimax.
However, it is inadmissible and dominated by Bayesian predictive densities based on superharmonic priors \citep{Komaki01,George06}.
In particular, the Bayesian predictive density based on the singular value shrinkage prior $\pi_{{\rm SVS}}$ in \eqref{SVS} is minimax and dominates that based on the uniform prior \citep{Matsuda}.

The asymptotic expansion of the difference between the Kullback--Leibler risk of two Bayesian predictive densities is obtained as follows.

\begin{lemma}\label{lem_mat_pred1}
	As $N \to \infty$, the difference between the Kullback--Leibler risk of $p_{\pi_1} (\widetilde{Y} \mid Y)$ and $p_{\pi_1 \pi_2} (\widetilde{Y} \mid Y)$ is expanded as
	\begin{align}
		&  {\rm E}_{M} [ D(p(\widetilde{Y} \mid M), p_{\pi_1 \pi_2}(\widetilde{Y} \mid Y)) ] - {\rm E}_{M} [ D(p(\widetilde{Y} \mid M), p_{\pi_1}(\widetilde{Y} \mid Y)) ] \nonumber \\
		= & \frac{1}{2 N^2} {\rm tr} ( 2 (\widetilde{\nabla} \log \pi_1 (M))^{\top} (\widetilde{\nabla} \log \pi_2 (M)) + (\widetilde{\nabla} \log \pi_2 (M))^{\top} (\widetilde{\nabla} \log \pi_2 (M)) + 2 \widetilde{\Delta} \log \pi_2 (M) ) \nonumber \\
		& \quad + o(N^{-2}). \label{pred_risk_diff}
	\end{align}
\end{lemma}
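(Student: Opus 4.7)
The plan is to mirror the derivation of the estimation-risk expansion in Lemma~\ref{lem_mat_est}: reduce the KL-risk difference to a comparison of log-marginal densities at two different precisions, asymptotically expand in $1/N$, and subtract.

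First, I would exploit that $W=(NY+\widetilde{Y})/(N+1)\sim \mathrm{N}_{n,p}(M,I_n,(N+1)^{-1}I_p)$ is sufficient for $(Y,\widetilde{Y})\mid M$, with $\widetilde{Y}-Y$ ancillary. Writing $m_\pi^{(v)}(X)=\int p(X\mid M,v)\pi(M)\,{\rm d}M$ for the marginal density of $X\sim \mathrm{N}_{n,p}(M,I_n,v^{-1}I_p)$, one has $p_\pi(\widetilde{Y}\mid Y)=\bigl(m_\pi^{(N+1)}(W)/m_\pi^{(N)}(Y)\bigr)\,h(\widetilde{Y}-Y)$ with $h$ independent of $\pi$. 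Hence
\begin{equation*}
\mathrm{E}_M[D(p(\cdot\mid M),p_{\pi_1\pi_2})]-\mathrm{E}_M[D(p(\cdot\mid M),p_{\pi_1})]
= \mathrm{E}_M\!\left[\log\frac{m_{\pi_1}^{(N+1)}(W)}{m_{\pi_1\pi_2}^{(N+1)}(W)}\right]-\mathrm{E}_M\!\left[\log\frac{m_{\pi_1}^{(N)}(Y)}{m_{\pi_1\pi_2}^{(N)}(Y)}\right].
\end{equation*}

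Next, I would derive an asymptotic expansion of $\mathrm{E}_M[\log m_\pi^{(v)}(X)]$ for large $v$. The matrix heat equation in $v^{-1}$ satisfied by $m_\pi^{(v)}$, combined with the identity $\widetilde{\Delta}m/m=\mathrm{tr}(\widetilde{\Delta}\log m)+\|\widetilde{\nabla}\log m\|_{\mathrm{F}}^{2}$, produces the pointwise expansion of $\log m_\pi^{(v)}(M)$, and a second-order Taylor expansion of $\log m_\pi^{(v)}$ about $M$ in expectation under $X\sim \mathrm{N}_{n,p}(M,I_n,v^{-1}I_p)$ yields
\begin{equation*}
\mathrm{E}_M[\log m_\pi^{(v)}(X)]=\log\pi(M)+\frac{1}{2v}\Bigl(2\,\mathrm{tr}(\widetilde{\Delta}\log\pi(M))+\|\widetilde{\nabla}\log\pi(M)\|_{\mathrm{F}}^{2}\Bigr)+O(v^{-2}).
\end{equation*}
Applying this with $\pi=\pi_1\pi_2$ and $\pi=\pi_1$, and using $\log(\pi_1\pi_2)=\log\pi_1+\log\pi_2$ together with $\|\widetilde{\nabla}\log(\pi_1\pi_2)\|_{\mathrm{F}}^2-\|\widetilde{\nabla}\log\pi_1\|_{\mathrm{F}}^2=2\,\mathrm{tr}((\widetilde{\nabla}\log\pi_1)^\top\widetilde{\nabla}\log\pi_2)+\|\widetilde{\nabla}\log\pi_2\|_{\mathrm{F}}^2$, the difference at precision $v$ becomes $\log\pi_2(M)+(2v)^{-1}T(M)+O(v^{-2})$, where $T(M)$ is exactly the trace appearing in \eqref{pred_risk_diff}.

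Finally, evaluating at $v=N$ (against $Y$) and $v=N+1$ (against $W$) and subtracting according to the identity displayed in the first step cancels the $\log\pi_2(M)$ contribution. The surviving $(2v)^{-1}T(M)$ pieces combine as $\bigl(1/(2N)-1/(2(N+1))\bigr)T(M)=T(M)/(2N^2)+O(N^{-3})$, while the $O(v^{-2})$ remainders (to leading order of the form $v^{-2}B(M)$ with $B$ independent of $v$) reduce after differencing to $O(N^{-3})$. Since $D=-\mathrm{E}[\log p]+\mathrm{const}$, this produces \eqref{pred_risk_diff}. The main obstacle is not the algebraic bookkeeping but controlling the $O(v^{-2})$ remainder in the log-marginal expansion uniformly enough in $M$ that the subtraction genuinely yields $o(N^{-2})$; this requires integrability of higher-order derivatives of $\log\pi_1$ and $\log\pi_2$ under the matrix-normal sampling distribution, paralleling the regularity tacitly used in Lemma~\ref{lem_mat_est}.
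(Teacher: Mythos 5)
Your proposal is correct, and it takes a genuinely different route from the paper. The paper's proof is short because it invokes equation~(3) of \cite{Komaki06} --- the general information-geometric asymptotic expansion of the Kullback--Leibler risk of a Bayesian predictive density in a curved exponential family --- and simply specializes to the normal model where the metric is flat, $\Gamma_{ij}^k = T_{ijk}=0$, and the Jeffreys prior is uniform, then subtracts. You instead give a self-contained derivation: the sufficiency decomposition $W=(NY+\widetilde{Y})/(N+1)$ and the representation $p_\pi(\widetilde{Y}\mid Y)=\bigl(m_\pi^{(N+1)}(W)/m_\pi^{(N)}(Y)\bigr)h(\widetilde{Y}-Y)$ (the George--Liang--Xu factorization), combined with a direct heat-kernel/Taylor expansion of $\mathrm{E}_M[\log m_\pi^{(v)}(X)]$ in powers of $v^{-1}$, reproduce the Gaussian specialization of Komaki's formula from scratch. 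The advantage of your route is that it avoids the differential-geometric machinery and makes the cancellation of the $O(1)$ term $\log\pi_2(M)$ and the telescoping $1/(2N)-1/(2(N+1))=1/(2N^2)+O(N^{-3})$ completely explicit; the cost is that you must track the $O(v^{-2})$ remainder carefully enough to certify that its contribution after differencing is $O(N^{-3})$, which --- as you correctly note --- requires that the remainder admit a leading $v^{-2}B(M)$ term with $B$ independent of $v$, a regularity condition paralleling what is tacitly assumed both in Lemma~\ref{lem_mat_est} and in Komaki's expansion. Your identity for $T(M)$, namely $\|\widetilde{\nabla}\log(\pi_1\pi_2)\|_{\mathrm{F}}^2-\|\widetilde{\nabla}\log\pi_1\|_{\mathrm{F}}^2=2\,\mathrm{tr}((\widetilde{\nabla}\log\pi_1)^\top\widetilde{\nabla}\log\pi_2)+\|\widetilde{\nabla}\log\pi_2\|_{\mathrm{F}}^2$ together with $\widetilde{\Delta}\log(\pi_1\pi_2)-\widetilde{\Delta}\log\pi_1=\widetilde{\Delta}\log\pi_2$, matches the trace in \eqref{pred_risk_diff}, so the bookkeeping is sound.
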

\begin{proof}
	For the normal model with known covariance, the information geometrical quantities \citep{Amari} are given by
	\begin{align*}
		g_{i j} = g^{i j} = \delta_{i j}, \quad \Gamma_{i j}^k = 0, \quad T_{i j k} = 0.
	\end{align*}
	Also, the Jeffreys prior coincides with the uniform prior $\pi (M) \equiv 1$.
	Therefore, from equation (3) of \cite{Komaki06}, the Kullback--Leibler risk of the Bayesian predictive density 
	$p_{\pi}(\widetilde{Y} \mid Y)$ based on a prior $\pi(M)$ is expanded as
	\begin{align}
		& {\rm E}_{M} [ D(p(\widetilde{Y} \mid M), p_{\pi}(\widetilde{Y} \mid Y)) ] \nonumber \\
		= & \frac{np}{2 N} +\frac{1}{2 N^2} {\rm tr} ((\widetilde{\nabla} \log \pi(M))^{\top} (\widetilde{\nabla} \log \pi(M)) + 2 \widetilde{\Delta} \log \pi(M)) + g(M) + o(N^{-2}), \label{pred_asymp_risk}
	\end{align}
	where $g(M)$ is a function independent of $\pi(M)$.
	Substituting $\pi=\pi_1 \pi_2$ and $\pi=\pi_1$ into \eqref{pred_asymp_risk} and taking difference, we obtain \eqref{pred_risk_diff}.
\end{proof}

By comparing Lemma~\ref{lem_mat_pred1} to Lemma~\ref{lem_mat_est}, we obtain the following connection between estimation and prediction.

\begin{proposition}\label{lem_equiv}
	For every $M$,
	\begin{align*}
		&\lim_{N \to \infty} N^2 ( {\rm E}_{M} [ D(p(\widetilde{Y} \mid M), \hat{p}_{\pi_1 \pi_2}(\widetilde{Y} \mid Y)) ] - {\rm E}_{M} [ D(p(\widetilde{Y} \mid M), \hat{p}_{\pi_1}(\widetilde{Y} \mid Y)) ] ) \\
		=& \frac{1}{2} \lim_{N \to \infty} N^2 ( {\rm E}_{M} [\| \hat{M}^{\pi_1 \pi_2}-M \|_{\mathrm{F}}^2] - {\rm E}_{M} [\| \hat{M}^{\pi_1}-M \|_{\mathrm{F}}^2] ).
	\end{align*}
	Therefore, if $\hat{\theta}^{\pi_1 \pi_2}$ asymptotically dominates $\hat{\theta}^{\pi_1}$ under the quadratic loss, then $\hat{p}_{\pi_1 \pi_2}(\widetilde{Y} \mid Y)$ asymptotically dominates $\hat{p}_{\pi_1}(\widetilde{Y} \mid Y)$ under the Kullback--Leibler loss.
\end{proposition}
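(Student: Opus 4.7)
The plan is to prove this by directly comparing the two asymptotic expansions: Lemma~\ref{lem_mat_pred1} for the prediction risk difference and Lemma~\ref{lem_mat_est} for the estimation risk difference (the latter being the expansion used throughout the proofs of Theorems~\ref{th_MSVS1_dom} and \ref{th_MSVS2_dom}). The key observation is that the trace expressions appearing in both expansions are \emph{identical}, and only the overall prefactors differ by exactly a factor of $2$.

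More precisely, I would first write down Lemma~\ref{lem_mat_pred1} specialized to $\pi = \pi_1 \pi_2$ versus $\pi = \pi_1$, yielding
\begin{align*}
&{\rm E}_M [D(p(\widetilde{Y} \mid M), \hat{p}_{\pi_1 \pi_2}(\widetilde{Y} \mid Y))] - {\rm E}_M [D(p(\widetilde{Y} \mid M), \hat{p}_{\pi_1}(\widetilde{Y} \mid Y))] \\
&= \frac{1}{2N^2} {\rm tr}\bigl( 2 \widetilde{\nabla} \log \pi_1(M)^{\top} \widetilde{\nabla} \log \pi_2(M) + \widetilde{\nabla} \log \pi_2(M)^{\top} \widetilde{\nabla} \log \pi_2(M) + 2 \widetilde{\Delta} \log \pi_2(M) \bigr) + o(N^{-2}).
\end{align*}
Then I would recall the analogous expansion from Lemma~\ref{lem_mat_est}, which gives the estimation risk difference as exactly twice the same trace expression divided by $N^2$, i.e.\ without the leading factor $1/2$.

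Multiplying both sides by $N^2$ and passing to the limit as $N \to \infty$, the $o(N^{-2})$ remainders vanish, and the two limits coincide up to the factor of $1/2$. This immediately gives the asserted identity. The dominance statement follows as a direct corollary: if the right-hand side is nonpositive for every $M$ and strictly negative for some $M$, the same holds on the left, which is precisely the definition of asymptotic dominance of $\hat{p}_{\pi_1 \pi_2}$ over $\hat{p}_{\pi_1}$ under the Kullback--Leibler loss.

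There is essentially no hard step here; the work is already done inside Lemmas~\ref{lem_mat_pred1} and \ref{lem_mat_est}, both of which trace back to the general asymptotic expansion of Bayesian predictive Kullback--Leibler risk due to \cite{Komaki06}. The only thing to verify carefully is that the two trace expressions match term by term (the cross term $2 \widetilde{\nabla} \log \pi_1^\top \widetilde{\nabla} \log \pi_2$, the quadratic term in $\pi_2$, and the Laplacian term $2 \widetilde{\Delta} \log \pi_2$), which is immediate from inspection. No additional regularity assumptions beyond those implicit in the two lemmas are needed.
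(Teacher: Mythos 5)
Your proposal matches the paper's own argument exactly: the paper states that the proposition is obtained "by comparing Lemma~\ref{lem_mat_pred1} to Lemma~\ref{lem_mat_est}," and you do precisely that, noting the identical trace expressions and the extra factor of $1/2$ in the prediction expansion. The proof is correct and complete.
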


Therefore, Theorems~\ref{th_MSVS1_dom} and \ref{th_MSVS2_dom} are extended to Bayesian prediction as follows.
Other results in the previous sections can be extended to Bayesian prediction similarly.

\begin{theorem}\label{th_MSVS1_pred}
	For every $M$,
	\begin{align*}
		& N^2 ({\rm E}_M [ D(p(\cdot \mid M),\hat{p}_{{\rm MSVS1}}(\cdot \mid Y) ) ] - {\rm E}_M [ D(p(\cdot \mid M),\hat{p}_{{\rm SVS}}(\cdot \mid Y) ) ] ) \\
		\to& \frac{\gamma (\gamma-2p^2-2p+4)}{2 {\rm tr} (M^{\top} M)}
	\end{align*}
	as $N \to \infty$.
	Therefore, if $p \geq 2$ and $0 < \gamma < p^2+p$, then the Bayesian predictive density with respect to $\pi_{{\rm MSVS1}}$ in \eqref{MSVS1} asymptotically dominates that with respect to $\pi_{{\rm SVS}}$ in \eqref{SVS} under the Kullback--Leibler loss.
\end{theorem}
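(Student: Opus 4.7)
The plan is to apply Proposition~\ref{lem_equiv} directly as a transfer principle and piggyback on the estimation result of Theorem~\ref{th_MSVS1_dom}. Setting $\pi_1 = \pi_{{\rm SVS}}$ and $\pi_2(M) = \|M\|_{{\rm F}}^{-\gamma}$, the product $\pi_1\pi_2$ is exactly $\pi_{{\rm MSVS1}}$ as defined in \eqref{MSVS1}. Lemma~\ref{lem_MSVS1} guarantees that $\pi_{{\rm MSVS1}}$ yields a well-defined marginal on the range $0 \leq \gamma < p^2+p$, so the Bayesian predictive density $\hat{p}_{{\rm MSVS1}}$ is itself well-defined in this range.

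First, I would invoke Proposition~\ref{lem_equiv} with this choice of $(\pi_1,\pi_2)$, which asserts that
\[
\lim_{N \to \infty} N^2 \bigl( {\rm E}_{M}[D(p(\cdot \mid M), \hat{p}_{{\rm MSVS1}}(\cdot \mid Y))] - {\rm E}_{M}[D(p(\cdot \mid M), \hat{p}_{{\rm SVS}}(\cdot \mid Y))] \bigr)
\]
equals one half of the corresponding Frobenius-risk limit. Second, I would quote \eqref{risk_diff} from Theorem~\ref{th_MSVS1_dom}, which identifies the latter Frobenius-risk limit as $\gamma(\gamma - 2p^2 - 2p + 4)/\operatorname{tr}(M^{\top} M)$. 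Dividing by $2$ produces the stated formula, completing the convergence part of the theorem.

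For the asymptotic dominance conclusion, I would verify that the limit is strictly negative whenever $p \geq 2$, $M \neq O$, and $0 < \gamma < p^2+p$. Since $\gamma > 0$ and $\operatorname{tr}(M^{\top} M) > 0$, it suffices to check $\gamma - 2p^2 - 2p + 4 < 0$. A quick algebraic comparison shows $p^2 + p < 2p^2 + 2p - 4$ for every $p \geq 2$, so the hypothesis $\gamma < p^2+p$ automatically forces $\gamma < 2p^2+2p-4$, making the limit strictly negative. Hence for each such $M$, the KL risk of $\hat{p}_{{\rm MSVS1}}$ is strictly smaller than that of $\hat{p}_{{\rm SVS}}$ for all sufficiently large $N$, which is the asymptotic dominance claim.

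There is no substantive obstacle in this argument because all the analytic work — the asymptotic expansion of the KL risk, the estimation-prediction correspondence, and the explicit evaluation of the gradient/Laplacian terms for $\pi_{{\rm SVS}}$ and $\pi_{{\rm S}}$ — has already been carried out in Lemma~\ref{lem_mat_pred1}, Proposition~\ref{lem_equiv}, and the proof of Theorem~\ref{th_MSVS1_dom}. The only point that deserves explicit mention is the sign check $\gamma < p^2+p \Rightarrow \gamma < 2p^2+2p-4$ for $p \geq 2$, which is elementary.
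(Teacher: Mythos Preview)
Your proposal is correct and follows exactly the route the paper takes: it states Theorem~\ref{th_MSVS1_pred} (and its companion) as immediate consequences of Proposition~\ref{lem_equiv} applied with $\pi_1=\pi_{{\rm SVS}}$, $\pi_2=\|M\|_{\mathrm F}^{-\gamma}$, together with the Frobenius-risk limit \eqref{risk_diff} from Theorem~\ref{th_MSVS1_dom}. The paper does not even write out a separate proof, so your added sign check $p^2+p<2p^2+2p-4$ for $p\geq 2$ and the remark on well-definedness via Lemma~\ref{lem_MSVS1} are slightly more explicit than what the paper records.
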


\begin{theorem}\label{th_MSVS2_pred}
	For every $M$,
	\begin{align*}
		& N^2 ({\rm E}_M [ D(p(\cdot \mid M),\hat{p}_{{\rm MSVS2}}(\cdot \mid Y) ) ] - {\rm E}_M [ D(p(\cdot \mid M),\hat{p}_{{\rm SVS}}(\cdot \mid Y) ) ] ) \\
		\to& \frac{1}{2} \sum_{i=1}^p \gamma_i (\gamma_i-2p+2) \| M_{\cdot i} \|^{-2}
	\end{align*}
	as $N \to \infty$.
	Therefore, if $p \geq 2$ and $0 < \gamma \leq p$, then the Bayesian predictive density with respect to $\pi_{{\rm MSVS2}}$ in \eqref{MSVS2} asymptotically dominates that with respect to $\pi_{{\rm SVS}}$ in \eqref{SVS} under the Kullback--Leibler loss.
\end{theorem}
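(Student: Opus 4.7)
The plan is to obtain both claims as direct corollaries of Proposition~\ref{lem_equiv} applied with $\pi_1 = \pi_{{\rm SVS}}$ and $\pi_2 = \pi_{{\rm CS}}$, where $\pi_{{\rm CS}}(M) = \prod_{i=1}^p \|M_{\cdot i}\|^{-\gamma_i}$ is exactly the column-wise shrinkage factor already isolated in the proof of Theorem~\ref{th_MSVS2_dom}. The key identification is that $\pi_{{\rm MSVS2}} = \pi_{{\rm SVS}} \cdot \pi_{{\rm CS}} = \pi_1 \pi_2$, so Proposition~\ref{lem_equiv} identifies the asymptotic KL risk difference between $\hat{p}_{{\rm MSVS2}}$ and $\hat{p}_{{\rm SVS}}$ with exactly one half of the asymptotic Frobenius risk difference between $\hat{M}_{{\rm MSVS2}}$ and $\hat{M}_{{\rm SVS}}$.

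For the limit formula, I would simply invoke Theorem~\ref{th_MSVS2_dom}, which gives
\begin{align*}
N^2 \bigl( {\rm E}_M[\|\hat{M}_{{\rm MSVS2}} - M\|_{{\rm F}}^2] - {\rm E}_M[\|\hat{M}_{{\rm SVS}} - M\|_{{\rm F}}^2] \bigr) \to \sum_{i=1}^p \gamma_i(\gamma_i - 2p + 2) \|M_{\cdot i}\|^{-2}.
\end{align*}
Halving this in accordance with Proposition~\ref{lem_equiv} yields the stated formula for the predictive KL risk difference. For the dominance conclusion, note that under $p \geq 2$ and $0 < \gamma_i \leq p$ one has $\gamma_i \leq p \leq 2p - 2$, so each summand $\gamma_i(\gamma_i - 2p + 2)$ is nonpositive. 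Hence $\hat{M}_{{\rm MSVS2}}$ asymptotically dominates $\hat{M}_{{\rm SVS}}$ under Frobenius loss, and the dominance-transfer part of Proposition~\ref{lem_equiv} propagates this to asymptotic dominance of $\hat{p}_{{\rm MSVS2}}$ over $\hat{p}_{{\rm SVS}}$ under the Kullback--Leibler loss.

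There is essentially no obstacle here: the analytic substance, namely the gradient and Laplacian identities \eqref{nabla1}, \eqref{nabla4}, \eqref{nabla5} and the application of Lemma~\ref{lem_mat_est}, was already carried out when proving Theorem~\ref{th_MSVS2_dom}, and well-definedness of $\hat{p}_{{\rm MSVS2}}$ on the stated range of $\gamma_i$ is guaranteed by Lemma~\ref{lem_MSVS2}. The only mildly delicate point is to record that Lemma~\ref{lem_mat_pred1} (the predictive analogue of Lemma~\ref{lem_mat_est}) legitimately applies to the factorization $\pi_{{\rm SVS}} \cdot \pi_{{\rm CS}}$ at points $M$ where $\log \pi_{{\rm SVS}}$ and $\log \pi_{{\rm CS}}$ are smooth, i.e.\ away from the singular locus $\det(M^\top M) = 0$, which includes the condition $\|M_{\cdot i}\| \neq 0$ implicit in the asymptotic formula. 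This is exactly the regime in which the expansion of Theorem~\ref{th_MSVS2_dom} was already justified, so the proof reduces to a brief citation of the two preceding results.
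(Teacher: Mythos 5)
Your proposal is correct and matches the paper's own approach exactly: the paper presents Theorem~\ref{th_MSVS2_pred} as an immediate corollary of Proposition~\ref{lem_equiv} combined with Theorem~\ref{th_MSVS2_dom}, which is precisely your argument.
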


Figures~\ref{fig_MSVS1_pred} and \ref{fig_MSVS2_pred} plot the Kullback--Leibler risk of Bayesian predictive densities in similar settings to Figures~\ref{fig_MSVS1} and \ref{fig_MSVS2}, respectively.
They show that the risk behavior in prediction is qualitatively the same with that in estimation, which is compatible with Theorems~\ref{th_MSVS1_pred} and \ref{th_MSVS2_pred}.

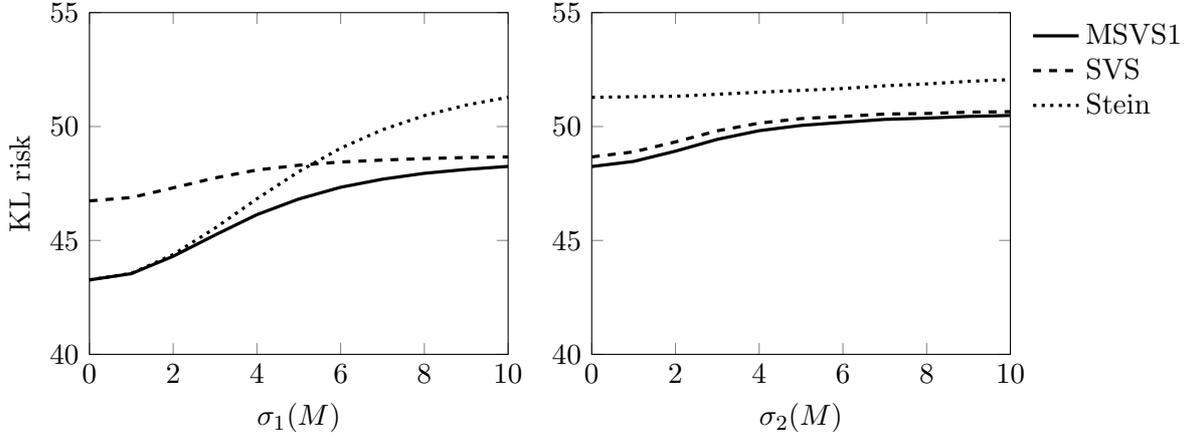
\begin{figure}[htbp]
	\centering
\begin{tikzpicture}
\tikzstyle{every node}=[]
	\begin{axis}[
		width=0.45\linewidth,
		xmax=10,xmin=0,
		ymax=55, ymin = 40,
xlabel={$\sigma_1(M)$},ylabel={KL risk},
ylabel near ticks,
]
\addplot[very thick,
		filter discard warning=false, unbounded coords=discard
		] table {
         0   43.2673
1.0000   43.5413
2.0000   44.3043
3.0000   45.2369
4.0000   46.1322
5.0000   46.8175
6.0000   47.3343
7.0000   47.6883
8.0000   47.9465
9.0000   48.1175
10.0000   48.2497
		};
		\addplot[very thick, dashed,
		filter discard warning=false, unbounded coords=discard
		] table {
         0   46.7354
1.0000   46.8835
2.0000   47.3083
3.0000   47.7431
4.0000   48.0924
5.0000   48.3012
6.0000   48.4424
7.0000   48.5275
8.0000   48.5927
9.0000   48.6398
10.0000   48.6652
		};
		\addplot[very thick, dotted,
		filter discard warning=false, unbounded coords=discard
		] table {
         0   43.2675
1.0000   43.5546
2.0000   44.3793
3.0000   45.5416
4.0000   46.8282
5.0000   48.0213
6.0000   49.0499
7.0000   49.8597
8.0000   50.4799
9.0000   50.9415
10.0000   51.2860
		};
	\end{axis}
\end{tikzpicture} 
\begin{tikzpicture}
\begin{axis}[
	width=0.45\linewidth,
	xmax=10,xmin=0,
	ymax=55, ymin = 40,
xlabel={$\sigma_2(M)$},
ylabel near ticks,
	legend pos=outer north east,
	legend entries={MSVS1,SVS,Stein},
	legend style={legend cell align=left,draw=none,fill=white,fill opacity=0.8,text opacity=1,},
	]
\addplot[very thick, 
	filter discard warning=false, unbounded coords=discard
	] table {
         0   48.2434
1.0000   48.4682
2.0000   48.9182
3.0000   49.4368
4.0000   49.8171
5.0000   50.0477
6.0000   50.1834
7.0000   50.3135
8.0000   50.3681
9.0000   50.4446
10.0000   50.4869
	};
	\addplot[very thick, dashed,
	filter discard warning=false, unbounded coords=discard
	] table {
         0   48.6659
1.0000   48.8872
2.0000   49.3236
3.0000   49.8108
4.0000   50.1478
5.0000   50.3484
6.0000   50.4432
7.0000   50.5478
8.0000   50.5723
9.0000   50.6297
10.0000   50.6522
	};
	\addplot[very thick, dotted,
	filter discard warning=false, unbounded coords=discard
	] table {
         0   51.2798
1.0000   51.3098
2.0000   51.3259
3.0000   51.4162
4.0000   51.5062
5.0000   51.5871
6.0000   51.6690
7.0000   51.7891
8.0000   51.8707
9.0000   51.9839
10.0000   52.0612
	};
\end{axis}
\end{tikzpicture} 
	\caption{Kullback--Leibler risk of Bayesian predictive densities for $n=10$, $p=3$ and $N=1$. Left: $\sigma_2=\sigma_3=0$. Right: $\sigma_1=10$, $\sigma_3=0$. solid: $\pi_{{\rm MSVS1}}$ with $\gamma=p^2+p-2$, dashed: $\pi_{{\rm SVS}}$, dotted: Stein's prior $\pi_{{\rm S}}$.}
	\label{fig_MSVS1_pred}
\end{figure}
 
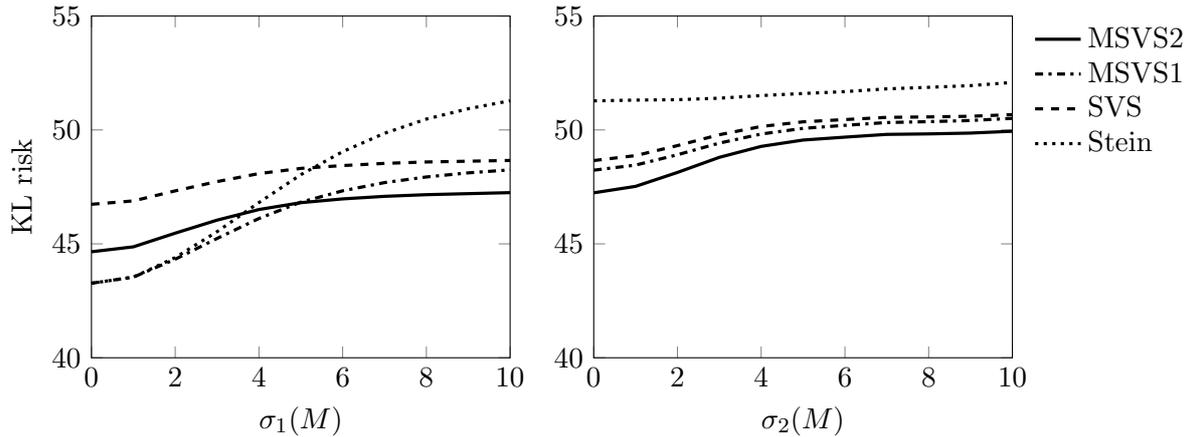
\begin{figure}[htbp]
	\centering
\begin{tikzpicture}
\tikzstyle{every node}=[]
	\begin{axis}[
		width=0.45\linewidth,
		xmax=10,xmin=0,
		ymax=55, ymin = 40,
xlabel={$\sigma_1(M)$},ylabel={KL risk},
ylabel near ticks,
		]
\addplot[very thick,
		filter discard warning=false, unbounded coords=discard
		] table {
         0   44.6530
1.0000   44.8694
2.0000   45.4691
3.0000   46.0414
4.0000   46.5070
5.0000   46.8082
6.0000   46.9733
7.0000   47.0857
8.0000   47.1592
9.0000   47.2040
10.0000   47.2487
		};
		\addplot[very thick, dash dot,
filter discard warning=false, unbounded coords=discard
] table {
         0   43.2723
1.0000   43.5388
2.0000   44.3249
3.0000   45.2368
4.0000   46.1135
5.0000   46.8304
6.0000   47.3285
7.0000   47.6891
8.0000   47.9364
9.0000   48.1168
10.0000   48.2475
};
		\addplot[very thick, dashed,
		filter discard warning=false, unbounded coords=discard
		] table {
         0   46.7366
1.0000   46.8852
2.0000   47.3257
3.0000   47.7332
4.0000   48.0839
5.0000   48.3121
6.0000   48.4365
7.0000   48.5318
8.0000   48.5946
9.0000   48.6277
10.0000   48.6616
		};
		\addplot[very thick, dotted,
		filter discard warning=false, unbounded coords=discard
		] table {
         0   43.2691
1.0000   43.5403
2.0000   44.3982
3.0000   45.5341
4.0000   46.8124
5.0000   48.0351
6.0000   49.0468
7.0000   49.8543
8.0000   50.4771
9.0000   50.9321
10.0000   51.2809
		};
	\end{axis}
\end{tikzpicture} 
\begin{tikzpicture}
\tikzstyle{every node}=[]
	\begin{axis}[
		width=0.45\linewidth,
		xmax=10,xmin=0,
		ymax=55, ymin = 40,
xlabel={$\sigma_2(M)$},ylabel near ticks,
		legend pos=outer north east,
		legend entries={MSVS2,MSVS1,SVS,Stein},
		legend style={legend cell align=left,draw=none,fill=white,fill opacity=0.8,text opacity=1,},
		]
\addplot[very thick,
		filter discard warning=false, unbounded coords=discard
		] table {
         0   47.2404
1.0000   47.5242
2.0000   48.1350
3.0000   48.7976
4.0000   49.2783
5.0000   49.5568
6.0000   49.6848
7.0000   49.8056
8.0000   49.8269
9.0000   49.8630
10.0000   49.9430
		};
		\addplot[very thick, dash dot,
filter discard warning=false, unbounded coords=discard
] table {
         0   48.2334
1.0000   48.4574
2.0000   48.9118
3.0000   49.4227
4.0000   49.8216
5.0000   50.0692
6.0000   50.2025
7.0000   50.3256
8.0000   50.3642
9.0000   50.4143
10.0000   50.5094
};
		\addplot[very thick, dashed,
		filter discard warning=false, unbounded coords=discard
		] table {
         0   48.6571
1.0000   48.8745
2.0000   49.3126
3.0000   49.7901
4.0000   50.1531
5.0000   50.3569
6.0000   50.4559
7.0000   50.5527
8.0000   50.5725
9.0000   50.6006
10.0000   50.6734
		};
		\addplot[very thick, dotted,
		filter discard warning=false, unbounded coords=discard
		] table {
         0   51.2755
1.0000   51.3107
2.0000   51.3279
3.0000   51.3931
4.0000   51.5088
5.0000   51.5996
6.0000   51.6823
7.0000   51.8037
8.0000   51.8708
9.0000   51.9453
10.0000   52.0850
		};
	\end{axis}
\end{tikzpicture} 
	\caption{Kullback--Leibler risk of Bayesian predictive densities for $n=10$, $p=3$ and $N=1$ where $M=U \Sigma$ with $U^{\top} U=I_p$ and $\Sigma={\rm diag}(\sigma_1,\dots,\sigma_p)$. Left: $\sigma_2=\sigma_3=0$. Right: $\sigma_1=10$, $\sigma_3=0$. solid: $\pi_{{\rm MSVS2}}$ with $\gamma_1=\dots=\gamma_p=p-1$, dash-dotted: $\pi_{{\rm MSVS1}}$ with $\gamma=p^2+p-2$, dashed: $\pi_{{\rm SVS}}$, dotted: Stein's prior $\pi_{{\rm S}}$.}
	\label{fig_MSVS2_pred}
\end{figure}

\section*{Acknowledgments}
	We thank the referees for helpful comments.
	This work was supported by JSPS KAKENHI Grant Numbers 19K20220, 21H05205, 22K17865 and JST Moonshot Grant Number JPMJMS2024.

\appendix

\section{Asymptotic expansion of risk}
Here, we provide asymptotic expansion formulas for estimators of a normal mean vector.
Consider the problem of estimating $\theta$ from the observation $Y \sim {\rm N}_d ( \theta, N^{-1} I_d )$ under the quadratic loss $l(\theta,\hat{\theta})=\| \hat{\theta}-\theta \|^2$.
As shown in \cite{Stein74}, the generalized Bayes estimator $\hat{\theta}^{\pi}$ with respect to a prior $\pi(\theta)$ is expressed as
\begin{align*}
	\hat{\theta}^{\pi} (y) = y + \frac{1}{N} \nabla_{y} \log m_{\pi} (y),
\end{align*}
where
\begin{align*}
	m_{\pi} (y) = \int p(y \mid \theta) \pi(\theta) {\rm d} \theta.
\end{align*}
The asymptotic difference between the quadratic risk of two generalized Bayes estimators as $N \to \infty$ is given as follows.

\begin{lemma}\label{lem_vec_est}
	As $N \to \infty$, the difference between the quadratic risk of $\hat{\theta}^{\pi_1}$ and $\hat{\theta}^{\pi_1 \pi_2}$ is expanded as
	\begin{align}
		& {\rm E}_{\theta} [\| \hat{\theta}^{\pi_1 \pi_2}-\theta \|^2] - {\rm E}_{\theta} [\| \hat{\theta}^{\pi_1}-\theta \|^2] \nonumber \\
		= & \frac{1}{N^2} ( 2 \nabla \log \pi_1 (\theta)^{\top} \nabla \log \pi_2 (\theta) + \| \nabla \log \pi_2 (\theta) \|^2 + 2 \Delta \log \pi_2 (\theta) ) + o(N^{-2}). \label{est_risk_diff}
	\end{align}
\end{lemma}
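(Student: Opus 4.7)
The plan is to derive an asymptotic expansion of the quadratic risk of a single generalized Bayes estimator $\hat{\theta}^{\pi}$ in terms of $\pi$, apply it to $\pi = \pi_1 \pi_2$ and $\pi = \pi_1$, and subtract. Starting from $\hat{\theta}^{\pi}(y) = y + N^{-1} \nabla \log m_{\pi}(y)$, expanding the squared loss and applying Stein's identity (integration by parts against the Gaussian density of $Y - \theta \sim {\rm N}_d(0, N^{-1} I_d)$) give
\begin{align*}
{\rm E}_{\theta}[\| \hat{\theta}^{\pi} - \theta \|^2] = \frac{d}{N} + \frac{1}{N^2} {\rm E}_{\theta}\!\left[ \| \nabla \log m_{\pi}(Y) \|^2 + 2 \Delta \log m_{\pi}(Y) \right].
\end{align*}

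Next I would perform a Laplace-type expansion of $m_{\pi}$: the substitution $\theta' = y + z/\sqrt{N}$ in the integral defining $m_{\pi}(y)$ followed by Taylor expansion of $\pi(y + z/\sqrt{N})$ around $y$ and integration against the standard normal density of $z$ yields
\begin{align*}
m_{\pi}(y) = \pi(y) + \frac{1}{2N} \Delta \pi(y) + O(N^{-2}),
\end{align*}
and therefore, using $\Delta \pi / \pi = \Delta \log \pi + \| \nabla \log \pi \|^2$, $\log m_{\pi}(y) = \log \pi(y) + O(N^{-1})$. Differentiating this expansion and Taylor-expanding $\| \nabla \log \pi(\cdot) \|^2$ and $\Delta \log \pi(\cdot)$ at $\theta$ (using $Y - \theta = O_P(N^{-1/2})$) produces a Komaki-type expansion
\begin{align*}
{\rm E}_{\theta}[\| \hat{\theta}^{\pi} - \theta \|^2] = \frac{d}{N} + \frac{1}{N^2}\left( \| \nabla \log \pi(\theta) \|^2 + 2 \Delta \log \pi(\theta) \right) + g(\theta) + o(N^{-2}),
\end{align*}
where $g(\theta)$ collects higher-order corrections that do not depend on $\pi$.

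Substituting $\pi = \pi_1 \pi_2$ and $\pi = \pi_1$ and subtracting, the $d/N$ and $g(\theta)$ terms cancel. Using $\log(\pi_1 \pi_2) = \log \pi_1 + \log \pi_2$, the elementary identity
\begin{align*}
\| \nabla \log \pi_1 + \nabla \log \pi_2 \|^2 - \| \nabla \log \pi_1 \|^2 = 2 (\nabla \log \pi_1)^{\top} (\nabla \log \pi_2) + \| \nabla \log \pi_2 \|^2,
\end{align*}
together with $\Delta \log(\pi_1 \pi_2) - \Delta \log \pi_1 = \Delta \log \pi_2$, reduces the difference to the right-hand side of \eqref{est_risk_diff}. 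The main obstacle I expect is rigorously justifying that the formal Laplace/Taylor expansion yields a genuine $o(N^{-2})$ remainder after passing through ${\rm E}_{\theta}$, which requires mild integrability and growth control on $\pi$ and its derivatives; the cleanest presentation is probably to deduce this lemma directly from the vector analogue of equation~(3) of \cite{Komaki06}, which was already invoked in the proof of Lemma~\ref{lem_mat_pred1}, rather than reprove the expansion in full.
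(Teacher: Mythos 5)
Your argument matches the paper's proof: Stein's identity gives ${\rm E}_{\theta}[\|\hat{\theta}^{\pi}-\theta\|^2] = \tfrac{d}{N} + \tfrac{1}{N^2}{\rm E}_{\theta}[\|\nabla\log m_{\pi}(Y)\|^2 + 2\Delta\log m_{\pi}(Y)]$, then the Laplace-type asymptotic $m_{\pi}(y)=\pi(y)+o(1)$ as $N\to\infty$ reduces the bracket to $\|\nabla\log\pi(\theta)\|^2 + 2\Delta\log\pi(\theta) + o(1)$, and substituting $\pi=\pi_1\pi_2$, $\pi=\pi_1$ and subtracting finishes the proof (the paper states the $m_\pi\to\pi$ step more tersely, and does not introduce a separate $g(\theta)$ term, but that is only a presentational difference since any further $\pi$-dependent corrections are $O(N^{-3})=o(N^{-2})$ and cancel to the stated order anyway). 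One caveat on your closing remark: equation (3) of \cite{Komaki06} expands the Kullback--Leibler risk of Bayesian predictive densities, not the quadratic risk of point estimators, so it is not directly a ``vector analogue'' of this lemma; the paper invokes that formula only for the prediction counterpart (Lemma~\ref{lem_mat_pred1}) and proves the estimation expansion directly from Stein's lemma, exactly as you do in the main body of your argument.
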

\begin{proof}
	By using Stein's lemma \citep{shr_book} and $m_{\pi}(y)=\pi(y)+o(1)$ as $N \to \infty$, the quadratic risk of the generalized Bayes estimator $\hat{\theta}^{\pi}$ is calculated as
	\begin{align}
		&{\rm E}_{\theta} [\| \hat{\theta}^{\pi}(y)-\theta \|^2] \nonumber \\
		=& {\rm E}_{\theta} [\| y - \theta \|^2] + \frac{2}{N} {\rm E}_{\theta} [(y - \theta)^{\top} \nabla \log m_{\pi} (y)] + \frac{1}{N^2} {\rm E}_{\theta} [\| \nabla \log m_{\pi} (y) \|^2] \nonumber \\
		=& \frac{d}{N} + \frac{1}{N^2} {\rm E}_\theta [ \| \nabla \log m_{\pi}(y) \|^2 + 2 \Delta \log m_{\pi} (y) ] \nonumber \\
		=& \frac{d}{N} + \frac{1}{N^2} \left( \| \nabla \log \pi (\theta) \|^2 + 2 \Delta \log \pi (\theta) \right) + o(N^{-2}). \label{est_asymp_risk}
	\end{align}
	Substituting $\pi=\pi_1 \pi_2$ and $\pi=\pi_1$ into \eqref{est_asymp_risk} and taking difference, we obtain \eqref{est_risk_diff}.
\end{proof}

We extend the above formula to matrices by using the matrix derivative notations from \cite{Matsuda22}.
For a function $f: \mathbb{R}^{n \times p} \to \mathbb{R}$, its {matrix gradient} $\widetilde{\nabla} f: \mathbb{R}^{n \times p} \to \mathbb{R}^{n \times p}$ is defined as 
\begin{align}
	(\widetilde{\nabla} f(X))_{ai} = \frac{\partial}{\partial X_{ai}} f(X). \label{matrix_grad}
\end{align}
For a $C^2$ function $f: \mathbb{R}^{n \times p} \to \mathbb{R}$, its {matrix Laplacian} $\widetilde{\Delta} f: \mathbb{R}^{n \times p} \to \mathbb{R}^{p \times p}$ is defined as 
\begin{align}
	(\widetilde{\Delta} f(X))_{ij} = \sum_{a=1}^n \frac{\partial^2}{\partial X_{ai} \partial X_{aj}} f(X).  \label{matrix_lap}
\end{align}
Then, the above formulas can be straightforwardly extended to matrix-variate normal distributions as follows.

\begin{lemma}\label{lem_mat_est}
	As $N \to \infty$, the difference between the Frobenius risk of $\hat{M}^{\pi_1}$ and $\hat{M}^{\pi_1 \pi_2}$ is expanded as
	\begin{align*}
		& {\rm E}_{M} [ \| \hat{M}^{\pi_1 \pi_2}-M \|_{\mathrm{F}}^2 ] - {\rm E}_{M} [ \| \hat{M}^{\pi_1}-M \|_{\mathrm{F}}^2 ] \nonumber \\
		= & \frac{1}{N^2} {\rm tr} ( 2 \widetilde{\nabla} \log \pi_1 (M)^{\top} \widetilde{\nabla} \log \pi_2 (M) + \widetilde{\nabla} \log \pi_2 (M)^{\top} \widetilde{\nabla} \log \pi_2 (M) + 2 \widetilde{\Delta} \log \pi_2 (M) ) \nonumber \\
		& \quad + o(N^{-2}).
	\end{align*}
\end{lemma}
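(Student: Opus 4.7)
The plan is to reduce the matrix statement to the vector statement already established in Lemma~\ref{lem_vec_est}. The key observation is that vectorization turns the matrix-variate model $Y \sim \mathrm{N}_{n,p}(M, I_n, N^{-1} I_p)$ into the vector model $\mathrm{vec}(Y) \sim \mathrm{N}_{np}(\mathrm{vec}(M), N^{-1} I_{np})$, and that the Frobenius loss on $M$ equals the quadratic loss on $\mathrm{vec}(M)$. Moreover, $\mathrm{vec}(\hat{M}^{\pi}) = \hat{\theta}^{\pi \circ \mathrm{vec}^{-1}}(\mathrm{vec}(Y))$ by the invariance of the posterior under linear reparametrization, so the generalized Bayes estimators correspond exactly on the two sides.

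First I would write down this dictionary precisely and apply Lemma~\ref{lem_vec_est} with $d = np$, $\theta = \mathrm{vec}(M)$, and priors $\pi_1 \circ \mathrm{vec}^{-1}$, $\pi_2 \circ \mathrm{vec}^{-1}$. This gives an asymptotic expansion whose right-hand side is expressed in terms of vector gradients and the scalar Laplacian with respect to $\mathrm{vec}(M)$. The remaining task is to translate those quantities into the matrix-derivative notation from \eqref{matrix_grad} and \eqref{matrix_lap}.

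The translation step is routine bookkeeping via the chain rule on indices. Using $(\widetilde{\nabla} f(M))_{ai} = \partial f/\partial M_{ai}$, I would verify
\[
\nabla_{\mathrm{vec}(M)} f \cdot \nabla_{\mathrm{vec}(M)} g = \sum_{a,i} \frac{\partial f}{\partial M_{ai}} \frac{\partial g}{\partial M_{ai}} = \mathrm{tr}\bigl(\widetilde{\nabla} f(M)^{\top} \widetilde{\nabla} g(M)\bigr),
\]
and similarly
\[
\Delta_{\mathrm{vec}(M)} f = \sum_{a,i} \frac{\partial^2 f}{\partial M_{ai}^2} = \sum_i (\widetilde{\Delta} f(M))_{ii} = \mathrm{tr}\bigl(\widetilde{\Delta} f(M)\bigr),
\]
so the three terms on the right-hand side of \eqref{est_risk_diff} transform precisely into the three trace terms claimed in Lemma~\ref{lem_mat_est}.

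I do not anticipate any real obstacle here: the result is essentially a notational repackaging of Lemma~\ref{lem_vec_est}. The only care needed is to make sure that the identification of matrix gradient and Laplacian under vectorization is stated unambiguously, and that the local integrability / differentiability assumptions used implicitly in Lemma~\ref{lem_vec_est} (finiteness of $m_\pi$, validity of Stein's identity, and $m_\pi(y) = \pi(y) + o(1)$ as $N \to \infty$) transfer verbatim to the matrix setting, which they do because vectorization preserves all smoothness and integrability conditions pointwise.
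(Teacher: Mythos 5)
Your proposal is correct and matches the paper's intent: the paper states Lemma~\ref{lem_mat_est} as a "straightforward extension" of Lemma~\ref{lem_vec_est} to the matrix-variate normal, which is precisely the vectorization-plus-notation-translation argument you give. The two identities $\sum_{a,i}\partial_{ai}f\,\partial_{ai}g = \mathrm{tr}(\widetilde{\nabla}f^{\top}\widetilde{\nabla}g)$ and $\sum_{a,i}\partial^2_{ai}f = \mathrm{tr}(\widetilde{\Delta}f)$ are exactly the needed dictionary.
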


\begin{lemma}\label{lem_mat_est2}
	As $N \to \infty$, the difference between the matrix quadratic risk of $\hat{M}^{\pi_1}$ and $\hat{M}^{\pi_1 \pi_2}$ is expanded as
	\begin{align*}
		& {\rm E}_{M} [ (\hat{M}^{\pi_1 \pi_2}-M)^{\top} (\hat{M}^{\pi_1 \pi_2}-M) - (\hat{M}^{\pi_1}-M)^{\top} (\hat{M}^{\pi_1}-M) ] \nonumber \\
		= & \frac{1}{N^2} ( 2 \widetilde{\nabla} \log \pi_1 (M)^{\top} \widetilde{\nabla} \log \pi_2 (M) + \widetilde{\nabla} \log \pi_2 (M)^{\top} \widetilde{\nabla} \log \pi_2 (M) + 2 \widetilde{\Delta} \log \pi_2 (M) ) \nonumber \\
		& \quad + o(N^{-2}).
	\end{align*}
\end{lemma}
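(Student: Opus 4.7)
The plan is to mimic the derivation of Lemma~\ref{lem_mat_est}, simply not taking the final trace, so that the conclusion is a $p\times p$ matrix identity rather than a scalar one. The starting point is the Tweedie-type representation $\hat{M}^{\pi}(Y) = Y + N^{-1}\widetilde{\nabla}\log m_{\pi}(Y)$, which gives
\begin{align*}
(\hat{M}^{\pi}-M)^{\top}(\hat{M}^{\pi}-M) = (Y-M)^{\top}(Y-M)
&+ N^{-1}\!\left[(Y-M)^{\top}\widetilde{\nabla}\log m_{\pi}(Y) + \widetilde{\nabla}\log m_{\pi}(Y)^{\top}(Y-M)\right] \\
&+ N^{-2}\widetilde{\nabla}\log m_{\pi}(Y)^{\top}\widetilde{\nabla}\log m_{\pi}(Y).
\end{align*}

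Next I would establish a matrix-valued Stein identity: for $Y\sim\mathrm{N}_{n,p}(M,I_n,N^{-1}I_p)$ and differentiable $g:\mathbb{R}^{n\times p}\to\mathbb{R}^{n\times p}$, entry-wise integration by parts against the matrix-normal density yields
\[
\mathrm{E}_M\!\left[(Y-M)^{\top}g(Y)\right]_{ij} = N^{-1}\,\mathrm{E}_M\!\left[\sum_{a=1}^n \frac{\partial g_{aj}}{\partial Y_{ai}}(Y)\right].
\]
Applied to $g = \widetilde{\nabla}\log m_{\pi}$, the right-hand side equals $N^{-1}\mathrm{E}_M[\widetilde{\Delta}\log m_{\pi}(Y)]$, which is symmetric because mixed partials commute. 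The two cross terms in the decomposition are transposes of each other, so their expectations sum to $2N^{-1}\mathrm{E}_M[\widetilde{\Delta}\log m_{\pi}(Y)]$. Combined with the Laplace-type asymptotic $\log m_{\pi}(Y) = \log\pi(Y) + O(N^{-1})$ (with corresponding control on its first two matrix-derivatives) and $Y = M + O_P(N^{-1/2})$, this produces the direct matrix analog of equation~\eqref{est_asymp_risk}:
\[
\mathrm{E}_M\!\left[(\hat{M}^{\pi}-M)^{\top}(\hat{M}^{\pi}-M)\right] = \frac{n}{N}I_p + \frac{1}{N^2}\!\left(\widetilde{\nabla}\log\pi(M)^{\top}\widetilde{\nabla}\log\pi(M) + 2\widetilde{\Delta}\log\pi(M)\right) + o(N^{-2}).
\]

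Substituting $\pi = \pi_1\pi_2$ and $\pi = \pi_1$ and subtracting, using $\widetilde{\nabla}\log(\pi_1\pi_2) = \widetilde{\nabla}\log\pi_1 + \widetilde{\nabla}\log\pi_2$ together with the analogous identity for $\widetilde{\Delta}$, and expanding $(A+B)^{\top}(A+B) - A^{\top}A = A^{\top}B + B^{\top}A + B^{\top}B$, yields the claimed formula. (The displayed term $2\widetilde{\nabla}\log\pi_1^{\top}\widetilde{\nabla}\log\pi_2$ should be understood as the symmetrized combination $\widetilde{\nabla}\log\pi_1^{\top}\widetilde{\nabla}\log\pi_2 + \widetilde{\nabla}\log\pi_2^{\top}\widetilde{\nabla}\log\pi_1$; in both applications of the lemma in Sections~\ref{sec_scalar} and~\ref{sec_column} the cross product is itself symmetric, so the two terms coincide.) The main obstacle is making the Laplace step quantitative enough to guarantee the $o(N^{-2})$ remainder, requiring uniform control of $m_{\pi}$ and its first two matrix-derivatives in an $N^{-1/2}$-neighborhood of $M$; since estimation of $M$ from $Y\sim\mathrm{N}_{n,p}(M,I_n,N^{-1}I_p)$ is equivalent by vectorization to estimation of $\mathrm{vec}(M)$ from $\mathrm{vec}(Y)\sim\mathrm{N}_{np}(\mathrm{vec}(M),N^{-1}I_{np})$, this control can be transferred directly from the scalar analysis already invoked for Lemma~\ref{lem_vec_est} and in \cite{Komaki06}.
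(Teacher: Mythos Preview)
Your proposal is correct and matches the paper's intended approach: the paper does not give an explicit proof of Lemma~\ref{lem_mat_est2} but states that it (together with Lemma~\ref{lem_mat_est}) follows ``straightforwardly'' by extending the vector-case argument of Lemma~\ref{lem_vec_est} to matrix-variate normals, which is precisely what you carry out by replacing the scalar Stein identity with its entrywise matrix version and omitting the final trace. Your parenthetical remark that the cross term should be read as the symmetrized combination $\widetilde{\nabla}\log\pi_1^{\top}\widetilde{\nabla}\log\pi_2 + \widetilde{\nabla}\log\pi_2^{\top}\widetilde{\nabla}\log\pi_1$ is a useful clarification the paper leaves implicit; as you note, in both applications (Theorems~3.2 and~4.2) the cross product is already symmetric, so the distinction is immaterial there.
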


\cite{Komaki06} derived the asymptotic expansion of the Kullback--Leibler risk of Bayesian predictive densities.
For the normal model as discussed in Section~\ref{sec_pred}, the result shows that Stein's prior dominates the Jeffreys prior in $O(N^{-1})$ term at the origin and $O(N^{-2})$ term at other points, which is reminiscent of superefficiency theory.
A similar phenomenon should exist in estimation as well.
Unlike Stein's prior, the priors for a normal mean matrix such as $\pi_{\mathrm{SVS}}$ diverge at many points such as low-rank matrices.
It is an interesting future problem to investigate the asymptotic risk of such priors in detail.

\section{Laplacian of $\pi_{{\rm MSVS1}}$ and $\pi_{{\rm MSVS2}}$}

\begin{lemma}\label{lem_laplacian}\citep{Stein74,Matsuda19}
	Suppose that $f:\mathbb{R}^{n \times p} \to \mathbb{R}$ is represented as $f(X) = \widetilde{f}(\sigma)$, where $n \geq p$ and $\sigma=(\sigma_1(X),\ldots,\sigma_p(X))$ denotes the singular values of $X$. If $f$ is twice weakly differentiable, then its Laplacian is
	\begin{align*}
		\Delta f &= \sum_{a=1}^n \sum_{i=1}^p \frac{\partial^2 f}{\partial X_{ai}^2} = 2 \sum_{i<j} \frac{\sigma_i {\partial \widetilde{f}}/{\partial \sigma_i} - \sigma_j  {\partial \widetilde{f}}/{\partial \sigma_j}}{\sigma_i^2 - \sigma_j^2} + (n-p) \sum_{i=1}^p \frac{1}{\sigma_i} \frac{\partial \widetilde{f}}{\partial \sigma_i} + \sum_{i=1}^p \frac{\partial^2 \widetilde{f}}{\partial \sigma_i^2}.
	\end{align*}
\end{lemma}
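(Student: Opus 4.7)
The plan is to exploit left-right orthogonal invariance of $f$ together with singular value perturbation theory, reducing the computation to a calculation at a diagonal base point. Since $\sigma(UXV^\top) = \sigma(X)$ for every $U \in O(n)$ and $V \in O(p)$, the scalar Laplacian $\Delta f$ is also bi-orthogonally invariant, so it suffices to verify the formula at $X = \Sigma$, the $n \times p$ matrix with $\Sigma_{ii} = \sigma_i$ for $i \leq p$ and zeros elsewhere; a general $X$ is handled by applying the identity to the middle factor of its SVD. At the base point $\Sigma$, I apply the chain rule
\[
\frac{\partial^2 f}{\partial X_{ai}^2}\bigg|_{\Sigma} = \sum_{k,l} \frac{\partial^2 \widetilde{f}}{\partial \sigma_k \partial \sigma_l}\,\frac{\partial \sigma_k}{\partial X_{ai}}\,\frac{\partial \sigma_l}{\partial X_{ai}} + \sum_k \frac{\partial \widetilde{f}}{\partial \sigma_k}\,\frac{\partial^2 \sigma_k}{\partial X_{ai}^2}
\]
and read off the first and second derivatives of $\sigma_k = \sqrt{\lambda_k(X^\top X)}$ from the standard second-order eigenvalue expansion applied to $X^\top X = \Sigma^\top\Sigma + \varepsilon A + \varepsilon^2 B$, where $A = \Sigma^\top E^{(ai)} + (E^{(ai)})^\top \Sigma$ and $B = (E^{(ai)})^\top E^{(ai)}$ come from the perturbation $X = \Sigma + \varepsilon E^{(ai)}$.

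The double sum over $(a,i)$ then splits into three natural pieces. First, for the diagonal entries $a = i \leq p$, the matrix stays diagonal and only $\sigma_i$ shifts by $\varepsilon$, giving $\partial \sigma_k/\partial X_{ii} = \delta_{ik}$ and $\partial^2 \sigma_k/\partial X_{ii}^2 = 0$; summing produces the pure Hessian term $\sum_i \partial^2 \widetilde{f}/\partial \sigma_i^2$. Second, for off-diagonal entries with $a \neq i$ and $a,i \leq p$, the first-order eigenvalue corrections vanish, while second-order perturbation theory yields $\partial^2 \sigma_i/\partial X_{ai}^2 = \sigma_i/(\sigma_i^2 - \sigma_a^2)$ and $\partial^2 \sigma_a/\partial X_{ai}^2 = -\sigma_a/(\sigma_i^2 - \sigma_a^2)$; pairing the $(a,i)$ and $(i,a)$ contributions for each unordered $\{a,i\}$ assembles the term $2\sum_{i<j}(\sigma_i \partial_i\widetilde{f} - \sigma_j \partial_j\widetilde{f})/(\sigma_i^2 - \sigma_j^2)$. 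Third, for the bottom rows $a > p$, column $i$ has new length $\sqrt{\sigma_i^2 + \varepsilon^2}$ while all other columns and orthogonality relations are preserved, so $\partial^2 \sigma_i/\partial X_{ai}^2 = 1/\sigma_i$ with all other derivatives zero; summing over the $n - p$ such rows yields $(n-p)\sum_i (1/\sigma_i)\,\partial\widetilde{f}/\partial\sigma_i$.

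The main obstacle is the regularity of these identities at points where two singular values coincide or where some $\sigma_i = 0$, since both the second-order eigenvalue formula and the denominator $\sigma_i^2 - \sigma_j^2$ are singular there. I would address this by noting that orthogonal invariance forces $\widetilde{f}$ to extend to a function symmetric under permutations and sign changes of its arguments, so $\sigma_i\partial_i\widetilde{f} - \sigma_j\partial_j\widetilde{f}$ vanishes whenever $\sigma_i = \sigma_j$ and the off-diagonal ratio remains bounded; together with the twice weak differentiability hypothesis, this establishes the formula on the open dense set of matrices with distinct positive singular values and then extends it in the weak sense to all of $\mathbb{R}^{n \times p}$.
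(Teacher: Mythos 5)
The paper does not prove this lemma; it cites it to Stein (1974) and Matsuda--Strawderman (2019), so there is no in-paper argument to compare against. Your proof is a correct, self-contained derivation. The reduction to a diagonal base point $\Sigma$ via bi-orthogonal invariance of the Laplacian is legitimate, the second-order chain rule is applied correctly, and the case analysis matches what the formula requires: for $a=i\le p$ the map $X_{ii}\mapsto\sigma_i$ is affine so only the pure Hessian term survives; for $a\neq i$ with both $\le p$ the first-order eigenvalue perturbation of $X^\top X$ vanishes and the Rayleigh--Schr\"odinger second-order formula (applied to $\Sigma^\top\Sigma + \varepsilon A + \varepsilon^2 B$ with $A$ having entries $\sigma_a$ at $(a,i)$ and $(i,a)$ and $B=e_ie_i^\top$) gives exactly $\partial^2\sigma_i/\partial X_{ai}^2 = \sigma_i/(\sigma_i^2-\sigma_a^2)$ and $\partial^2\sigma_a/\partial X_{ai}^2 = -\sigma_a/(\sigma_i^2-\sigma_a^2)$, whose ordered sum over $(a,i)$ and $(i,a)$ produces the $2\sum_{i<j}$ term; and for $a>p$ the cross term $A$ vanishes identically so $\sigma_i(\varepsilon)=\sqrt{\sigma_i^2+\varepsilon^2}$, yielding $1/\sigma_i$ and the factor $(n-p)$. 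Your closing remarks on the singular set (coincident or zero singular values) are the standard way such formulas are justified for weakly differentiable $f$: establish the identity on the dense open regular set and extend by a mollification/measure-zero argument, using the permutation symmetry of $\widetilde f$ to keep the divided-difference term bounded. That part is stated at the level of a sketch, but it is the right idea and is consistent with how the cited sources handle it.
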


\begin{proposition}\label{prop_MSVS1_lap}
	The Laplacian of $\pi_{{\rm MSVS1}}$ in \eqref{MSVS1} is given by
	\begin{align*}
		\Delta \pi_{{\rm MSVS1}} (M) &= \gamma (\gamma+np-2p^2-2p+2) \| M \|_{{\rm F}}^{-2} \pi_{{\rm MSVS1}} (M).
	\end{align*}
\end{proposition}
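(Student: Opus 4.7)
The plan is to factor $\pi_{{\rm MSVS1}}(M) = \pi_{{\rm SVS}}(M)\, h(M)$ with $h(M) = \|M\|_{{\rm F}}^{-\gamma}$ and apply the Leibniz rule for the Laplacian:
\[
\Delta(\pi_{{\rm SVS}}\, h) \;=\; h\,\Delta \pi_{{\rm SVS}} \;+\; 2\,{\rm tr}\bigl(\widetilde{\nabla}\pi_{{\rm SVS}}^{\top}\,\widetilde{\nabla} h\bigr) \;+\; \pi_{{\rm SVS}}\,\Delta h.
\]
This splits the problem into three independently tractable computations, two of which reuse formulas already established in the paper.

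The first step is to show $\Delta\pi_{{\rm SVS}} = 0$, i.e.\ that the singular value shrinkage prior is harmonic. I would verify this by applying Lemma~\ref{lem_laplacian} to $\widetilde{f}(\sigma) = \prod_i \sigma_i^{-(n-p-1)}$: since $\sigma_i\,\partial\widetilde{f}/\partial\sigma_i = -(n-p-1)\widetilde{f}$ is \emph{independent} of $i$, the off-diagonal pairwise sum $\sum_{i<j}(\sigma_i\partial_i\widetilde{f}-\sigma_j\partial_j\widetilde{f})/(\sigma_i^2-\sigma_j^2)$ vanishes identically, and a direct computation gives $\partial^2\widetilde{f}/\partial\sigma_i^2 = (n-p-1)(n-p)\sigma_i^{-2}\widetilde{f}$, which cancels $(n-p)\sum_i \sigma_i^{-1}\partial_i\widetilde{f} = -(n-p)(n-p-1)\sum_i\sigma_i^{-2}\widetilde{f}$ exactly. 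The exponent $(n-p-1)/2$ is tuned precisely to produce this cancellation.

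The second step is $\Delta h$: because $h$ depends on $M$ only through $\|M\|_{{\rm F}}$, I regard $M$ as a vector in $\mathbb{R}^{np}$ and invoke the standard identity $\Delta\|x\|^{-\gamma} = \gamma(\gamma - d + 2)\|x\|^{-\gamma-2}$ with $d = np$, yielding
\[
\Delta h \;=\; \gamma(\gamma - np + 2)\,\|M\|_{{\rm F}}^{-\gamma-2}.
\]
The third step, the cross term, reuses \eqref{nabla1} and \eqref{nabla2}: writing $K = M^{\top}M$, one has $\partial_{M_{ai}}\log\pi_{{\rm SVS}} = -(n-p-1)\sum_j M_{aj}K^{ij}$ and $\partial_{M_{ai}}\log h = -\gamma\, M_{ai}({\rm tr}\,K)^{-1}$. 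Multiplying and summing over $a,i$ collapses via $\sum_a M_{aj}M_{ai} = K_{ij}$ and $\sum_{i,j} K_{ij} K^{ij} = {\rm tr}\,I_p = p$, giving ${\rm tr}(\widetilde{\nabla}\log\pi_{{\rm SVS}}^{\top}\widetilde{\nabla}\log h) = \gamma p(n-p-1)({\rm tr}\,K)^{-1}$; multiplication by $\pi_{{\rm MSVS1}} = \pi_{{\rm SVS}}h$ converts this back to $2\,{\rm tr}(\widetilde{\nabla}\pi_{{\rm SVS}}^{\top}\widetilde{\nabla} h) = 2\gamma p(n-p-1)\|M\|_{{\rm F}}^{-2}\pi_{{\rm MSVS1}}$.

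Summing the three pieces gives the coefficient $\gamma\bigl[2p(n-p-1) + \gamma - np + 2\bigr] = \gamma(\gamma + np - 2p^2 - 2p + 2)$, proving the claim. The only genuinely delicate step is the harmonicity of $\pi_{{\rm SVS}}$; everything else is routine use of the product rule together with formulas already derived in the proof of Theorem~\ref{th_MSVS1_dom}. An alternative one-shot route applies Lemma~\ref{lem_laplacian} directly to $\widetilde{f}(\sigma) = (\prod_i\sigma_i^2)^{-(n-p-1)/2}(\sum_j\sigma_j^2)^{-\gamma/2}$, but the factored approach isolates the source of each term more transparently.
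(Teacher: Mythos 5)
Your proof is correct, and it takes a genuinely different route from the paper. The paper proves this proposition in one shot by applying Lemma~\ref{lem_laplacian} directly to $\widetilde{f}(\sigma) = (\prod_i \sigma_i^{-(n-p-1)})(\sum_j \sigma_j^2)^{-\gamma/2}$, computing $\partial\widetilde f/\partial\sigma_i$ and $\partial^2\widetilde f/\partial\sigma_i^2$ for the full product and substituting into the singular-value Laplacian formula. You instead factor $\pi_{\rm MSVS1}=\pi_{\rm SVS}\,h$ and use the Leibniz identity $\Delta(\pi_{\rm SVS} h)=h\,\Delta\pi_{\rm SVS}+2\,{\rm tr}(\widetilde\nabla\pi_{\rm SVS}^\top\widetilde\nabla h)+\pi_{\rm SVS}\,\Delta h$, then dispatch the three terms separately: harmonicity of $\pi_{\rm SVS}$ kills the first, the elementary identity $\Delta\|x\|^{-\gamma}=\gamma(\gamma-d+2)\|x\|^{-\gamma-2}$ with $d=np$ gives the third, and the cross term is exactly the quantity ${\rm tr}(\widetilde\nabla\log\pi_{\rm SVS}^\top\widetilde\nabla\log\pi_{\rm S})=\gamma p(n-p-1)({\rm tr}\,K)^{-1}$ already computed in the proof of Theorem~\ref{th_MSVS1_dom}, converted back from log-scale by multiplying by $\pi_{\rm MSVS1}$. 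All three pieces check out, and the coefficients sum to $\gamma[2p(n-p-1)+\gamma-np+2]=\gamma(\gamma+np-2p^2-2p+2)$ as required. Your approach is more modular and makes the structure of the constant transparent (it cleanly separates the contribution of the cross term from the scalar-shrinkage Laplacian); the paper's direct route avoids invoking harmonicity as a separate fact and keeps everything inside Lemma~\ref{lem_laplacian}. It is also worth noting that your method is closer in spirit to the paper's own proof of the companion result (Proposition~\ref{prop_MSVS2_lap}), which similarly exploits $\Delta\pi_{\rm SVS}=0$ together with a product/log decomposition rather than a direct singular-value computation.
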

\begin{proof}
	Let
	\begin{align*}
		\widetilde{f} (\sigma) = \left( \prod_{i=1}^p \sigma_i^{-(n-p-1)} \right) \left(\sum_{i=1}^p \sigma_i^2 \right)^{-\gamma/2}
	\end{align*}
	so that $\pi_{{\rm MSVS1}} (M)=\widetilde{f}(\sigma)$ with $\sigma=(\sigma_1(M),\dots,\sigma_p(M))$.
	From Lemma~\ref{lem_laplacian} and
	\begin{align*}
		\frac{\partial \widetilde{f}}{\partial \sigma_i} = -(n-p-1)\sigma_i^{-1} \widetilde{f} - \gamma \sigma_i \left(\sum_{j=1}^p \sigma_j^2 \right)^{-1} \widetilde{f},
	\end{align*}
	\begin{align*}
		\frac{\partial^2 \widetilde{f}}{\partial \sigma_i^2} = (n-p)(n-p-1)\sigma_i^{-2} \widetilde{f} + (2n-2p-3) \gamma \left(\sum_{j=1}^p \sigma_j^2 \right)^{-1} \widetilde{f} + \gamma (\gamma+2) \sigma_i^2 \left(\sum_{j=1}^p \sigma_j^2 \right)^{-2} \widetilde{f},
	\end{align*}
	we have
	\begin{align*}
		\Delta \pi_{{\rm MSVS1}}(M) &= 2 \sum_{i<j} \frac{\sigma_i {\partial \widetilde{f}}/{\partial \sigma_i} - \sigma_j  {\partial \widetilde{f}}/{\partial \sigma_j}}{\sigma_i^2 - \sigma_j^2} + (n-p) \sum_{i=1}^p \frac{1}{\sigma_i} \frac{\partial \widetilde{f}}{\partial \sigma_i} + \sum_{i=1}^p \frac{\partial^2 \widetilde{f}}{\partial \sigma_i^2} \\
		&= -2 \cdot \frac{p(p-1)}{2} \gamma \left(\sum_{i=1}^p \sigma_i^2 \right)^{-1} \widetilde{f} - (n-p) (n-p-1) \left( \sum_{i=1}^p \sigma_i^{-2} \right) \widetilde{f} \\
		& \quad - p(n-p) \gamma \left(\sum_{i=1}^p \sigma_i^2 \right)^{-1} \widetilde{f} + (n-p) (n-p-1) \left( \sum_{i=1}^p \sigma_i^{-2} \right) \widetilde{f} \\
		& \quad + \gamma (p(2n-2p-3) + \gamma+2) \left(\sum_{i=1}^p \sigma_i^2 \right)^{-1} \widetilde{f}\\
		& = \gamma (\gamma+np-2p^2-2p+2) \left(\sum_{i=1}^p \sigma_i^2 \right)^{-1} \widetilde{f}.
	\end{align*}
\end{proof}

\begin{proposition}\label{prop_MSVS2_lap}
	The Laplacian of $\pi_{{\rm MSVS2}}$ in \eqref{MSVS2} is given by
	\begin{align*}
		\Delta \pi_{{\rm MSVS2}} (M) &= \gamma (\gamma+n-2p) \left( \sum_{i=1}^p \| M_{\cdot i} \|^{-2} \right) \pi_{{\rm MSVS2}} (M).
	\end{align*}
\end{proposition}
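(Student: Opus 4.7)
The plan is to exploit the factorization $\pi_{\mathrm{MSVS2}}(M) = \pi_{\mathrm{SVS}}(M) \cdot \pi_{\mathrm{CS}}(M)$ with $\pi_{\mathrm{CS}}(M) = \prod_{i=1}^p \|M_{\cdot i}\|^{-\gamma}$, combined with the pointwise identity
\[
\frac{\Delta \pi}{\pi} = \Delta \log \pi + \|\widetilde{\nabla} \log \pi\|_{\mathrm F}^2
\]
applied to $\pi = \pi_{\mathrm{MSVS2}}$. Since the logarithm splits as $\log \pi_{\mathrm{MSVS2}} = \log \pi_{\mathrm{SVS}} + \log \pi_{\mathrm{CS}}$, this gives
\[
\frac{\Delta \pi_{\mathrm{MSVS2}}}{\pi_{\mathrm{MSVS2}}} = \bigl(\Delta \log \pi_{\mathrm{SVS}} + \|\widetilde{\nabla} \log \pi_{\mathrm{SVS}}\|_{\mathrm F}^2\bigr) + \Delta \log \pi_{\mathrm{CS}} + 2\,\widetilde{\nabla} \log \pi_{\mathrm{SVS}} \cdot \widetilde{\nabla} \log \pi_{\mathrm{CS}} + \|\widetilde{\nabla} \log \pi_{\mathrm{CS}}\|_{\mathrm F}^2.
\]

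The first step is to show that the parenthesized contribution vanishes, i.e.\ that $\pi_{\mathrm{SVS}}$ is in fact harmonic: $\Delta \pi_{\mathrm{SVS}} \equiv 0$. This follows from Lemma~\ref{lem_laplacian} applied to $\widetilde f(\sigma) = \prod_i \sigma_i^{-(n-p-1)}$: the first summand in the Laplacian formula vanishes because $\sigma_i\,\partial\widetilde f/\partial\sigma_i = -(n-p-1)\widetilde f$ is independent of $i$, and the remaining two summands cancel exactly. (Equivalently, this is Proposition~\ref{prop_MSVS1_lap} at $\gamma=0$.) With this in hand, the expression for $\Delta \pi_{\mathrm{MSVS2}}/\pi_{\mathrm{MSVS2}}$ reduces to the last three terms.

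The second step is to substitute the three trace quantities already computed in the proof of Theorem~\ref{th_MSVS2_dom}. Specializing \eqref{nabla6}, \eqref{nabla7}, \eqref{nabla8} to $\gamma_1=\dots=\gamma_p=\gamma$ gives
\[
\widetilde{\nabla} \log \pi_{\mathrm{SVS}} \cdot \widetilde{\nabla} \log \pi_{\mathrm{CS}} = (n-p-1)\gamma \sum_{i=1}^p \|M_{\cdot i}\|^{-2},
\]
\[
\|\widetilde{\nabla} \log \pi_{\mathrm{CS}}\|_{\mathrm F}^2 = \gamma^2 \sum_{i=1}^p \|M_{\cdot i}\|^{-2}, \qquad \Delta \log \pi_{\mathrm{CS}} = -(n-2)\gamma \sum_{i=1}^p \|M_{\cdot i}\|^{-2}.
\]
Summing with coefficients $2,1,1$ yields the bracket $\gamma\bigl[-(n-2)+2(n-p-1)+\gamma\bigr] = \gamma(\gamma+n-2p)$, producing the claimed formula after multiplying by $\pi_{\mathrm{MSVS2}}$.

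The only delicate point is the harmonicity of $\pi_{\mathrm{SVS}}$, which is essential for eliminating the $\pi_{\mathrm{SVS}}$ self-interaction terms. Everything else is algebraic bookkeeping relying on formulas already established earlier in the paper, so no genuinely new calculation is needed beyond verifying the $\gamma=0$ specialization.
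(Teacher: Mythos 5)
Your proof is correct and follows essentially the same route as the paper: both exploit the factorization $\pi_{\mathrm{MSVS2}}=\pi_{\mathrm{SVS}}\pi_{\mathrm{CS}}$, the identity $\Delta\pi/\pi=\Delta\log\pi+\|\nabla\log\pi\|^2$, the harmonicity of $\pi_{\mathrm{SVS}}$, and the already-established formulas \eqref{nabla6}--\eqref{nabla8}. The only cosmetic difference is that the paper cites Theorem~2 of \cite{Matsuda} for $\Delta\pi_{\mathrm{SVS}}=0$, whereas you rederive it from Lemma~\ref{lem_laplacian} (or equivalently Proposition~\ref{prop_MSVS1_lap} at $\gamma=0$), which is a valid and self-contained alternative.
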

\begin{proof}
	From
	\begin{align*}
		\Delta \log f = \frac{\Delta f}{f} - \| \nabla \log f \|^2
	\end{align*}
	and \eqref{nabla6}, \eqref{nabla7} and \eqref{nabla8},
	\begin{align*}
		\frac{\Delta \pi_{{\rm MSVS2}} (M)}{\pi_{{\rm MSVS2}} (M)} &= \Delta \log \pi_{{\rm MSVS2}}(M) + \| \nabla \log \pi_{{\rm MSVS2}}(M) \|^2 \\
		&= \Delta \log \pi_{{\rm SVS}}(M) + \Delta \log \pi_{{\rm CS}}(M) + \| \nabla \log \pi_{{\rm SVS}}(M) + \nabla \log \pi_{{\rm CS}}(M) \|^2 \\
		&= \frac{\Delta \pi_{{\rm SVS}}(M)}{\pi_{{\rm SVS}}(M)} +(-(n-2)\gamma + 2(n-p-1)\gamma + \gamma^2) \sum_{i=1}^p  \| M_{\cdot i} \|^{-2} \\
		&= \gamma (\gamma+n-2p) \sum_{i=1}^p \| M_{\cdot i} \|^{-2},
	\end{align*}
	where we used $\Delta \pi_{{\rm SVS}}(M)=0$ \citep[Theorem 2 of][]{Matsuda}.
\end{proof}

\section{Improving on the block-wise Stein prior}
Here, we develop priors that asymptotically dominate the block-wise Stein prior in estimation and prediction.
Suppose that we observe $Y \sim {\rm N}_d (\theta, N^{-1} I_d)$ and estimate $\theta$ or predict $\widetilde{Y} \sim {\rm N}_d (\theta, I_d)$. 
We assume that the $d$-dimensional mean vector $\theta$ is split into $B$ disjoint blocks $\theta^{(1)},\dots,\theta^{(B)}$ with size $d_1,\dots,d_B$, where $d_1+\dots+d_B=d$.
For example. such a situation appears in balanced ANOVA and wavelet regression \citep{Brown}.
Then, the block-wise Stein prior is defined as
\begin{align}
	\pi_{{\rm BS}} (\theta) = \prod_{b=1}^B \| \theta^{(b)} \|^{R_b}, \quad R_b=-(d_b-2)_{+}, \label{BS_prior}
\end{align}
which puts Stein's prior on each block.
Since it is superharmonic, the generalized Bayes estimator $\hat{\theta}^{\pi_{{\rm BS}}}$ with respect to $\pi_{{\rm BS}}$ is minimax.
However, \cite{Brown} showed that $\hat{\theta}^{\pi_{{\rm BS}}}$ is inadmissible and dominated by an estimator with additional James--Stein type shrinkage defined by
\begin{align*}
	\hat{\theta} (y) = \hat{\theta}^{\pi_{{\rm BS}}} (y) - \frac{R_{\#}+d-2}{\| y \|^2} y,
\end{align*}
where $R_{\#} = \sum_b R_b>2-d$.
From this result, \cite{Brown} conjectured that the block-wise Stein prior can be improved by multiplying a Stein-type shrinkage prior in Remark 3.2.
Following their conjecture, we construct priors by adding scalar shrinkage to the block-wise Stein priors:
\begin{align}
	\pi_{{\rm MBS}} (\theta) = \pi_{{\rm BS}} (\theta) \| \theta \|^{-\gamma}, \label{MBS_prior}
\end{align}
where $\gamma \geq 0$.
Let
\begin{align*}
	m_{{\rm MBS}} (y) = \int p(y \mid \theta) \pi_{{\rm MBS}} (\theta) {\rm d} \theta.
\end{align*}

\begin{lemma}\label{lem_MBS}
	If $0 \leq \gamma < B(R_b+d_b)$ for every $b$, then $m_{{\rm MBS}} (y) < \infty$ for every $y$.
\end{lemma}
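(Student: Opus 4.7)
The plan is to follow the template of Lemmas~\ref{lem_MSVS1} and \ref{lem_MSVS2}. Since $m_{\mathrm{MBS}}(y)$ is the expectation of $\pi_{\mathrm{MBS}}(\theta)$ under $\theta \sim \mathrm{N}_d(y, N^{-1} I_d)$, it suffices to establish local integrability of $\pi_{\mathrm{MBS}}$ at every $\theta_0 \in \mathbb{R}^d$ (the Gaussian weight absorbs the at-most-polynomial growth of $\pi_{\mathrm{MBS}}$ at infinity).

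The easy case is $\theta_0 \neq 0$: in a neighborhood the factor $\|\theta\|^{-\gamma}$ is bounded, and local integrability of $\pi_{\mathrm{BS}}(\theta)=\prod_b \|\theta^{(b)}\|^{R_b}$ decouples block-wise. Blocks with $\theta_0^{(b)} \neq 0$ contribute bounded factors, while for a block with $\theta_0^{(b)}=0$ and $d_b \geq 3$ the factor $\|\theta^{(b)}\|^{-(d_b-2)}$ is locally integrable on $\mathbb{R}^{d_b}$ because $-(d_b-2) > -d_b$.

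The nontrivial case, and the main point of the proof, is $\theta_0 = 0$, where the Stein-type singularity of every block collides with the singularity of $\|\theta\|^{-\gamma}$. The key device I would use is AM--GM on the squared block norms,
\[
\|\theta\|^2 = \sum_{b=1}^B \|\theta^{(b)}\|^2 \geq B \Bigl(\prod_{b=1}^B \|\theta^{(b)}\|\Bigr)^{2/B},
\]
which gives $\|\theta\|^{-\gamma} \leq B^{-\gamma/2} \prod_b \|\theta^{(b)}\|^{-\gamma/B}$ and hence the block-decoupled upper bound
\[
\pi_{\mathrm{MBS}}(\theta) \leq B^{-\gamma/2} \prod_{b=1}^B \|\theta^{(b)}\|^{R_b - \gamma/B}.
\]
On the product neighborhood $\prod_b\{\|\theta^{(b)}\| \leq \varepsilon\}$, which contains $\{\|\theta\|\leq\varepsilon\}$, the integral of this bound factors over blocks, and the $b$-th factor $\int_{\|\theta^{(b)}\|\leq\varepsilon} \|\theta^{(b)}\|^{R_b - \gamma/B} \, d\theta^{(b)}$ is finite precisely when $R_b - \gamma/B + d_b > 0$, i.e.\ $\gamma < B(R_b+d_b)$. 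Requiring this for every $b$ is exactly the hypothesis.

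The main obstacle is handling the coincident singularities at the origin: a direct polar argument on $\mathbb{R}^d$ only produces the single-constraint sufficient condition $\gamma < \sum_b(R_b+d_b)$, which does not reflect the block structure. The AM--GM step is what separates the variables so that each block contributes its own natural one-block constraint $\gamma < B(R_b+d_b)$, matching the block-wise form of the prior Brown introduced.
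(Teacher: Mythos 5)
Your proof is correct and matches the paper's argument step for step: reduce to local integrability, dispose of $\theta_0\neq 0$ using local integrability of $\pi_{\rm BS}$ plus boundedness of $\|\theta\|^{-\gamma}$, then decouple the coincident singularities at $\theta_0=0$ via the AM--GM bound $\|\theta\|^2\geq B\prod_b\|\theta^{(b)}\|^{2/B}$ and integrate block by block. One small correction to your closing remark: a direct polar argument in $\mathbb{R}^d$ would give the sufficient condition $\gamma<\sum_b(R_b+d_b)=R_\#+d$, which is actually \emph{weaker} than (and implied by) the stated hypothesis, since $\sum_b(R_b+d_b)\geq B\min_b(R_b+d_b)$; so that route would prove a slightly stronger lemma rather than falling short, and the AM--GM step is simply the device that reproduces the per-block form $\gamma<B(R_b+d_b)$ appearing in the statement.
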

\begin{proof}
	Since $m_{{\rm MBS}} (y)$ is interpreted as the expectation of $\pi_{{\rm MBS}} (\theta)$ under $\theta \sim {\rm N}_d(y,I_d)$, it suffices to show that $\pi_{{\rm MBS}} (\theta)$ is locally integrable at every $\theta$.
	
	First, consider $\theta \neq 0$.
	Since $m_{{\rm BS}} (y) < \infty$ for every $y$ \citep{Brown}, $\pi_{{\rm BS}} (\theta)$ is locally integrable at $\theta$.
	Also, $\| \theta \|>c$ for some $c>0$ in a neighborhood of $\theta$.
	Thus, $\pi_{{\rm MBS}} (\theta)=\pi_{{\rm BS}} (\theta) \| \theta \|^{-\gamma}$ is locally integrable at $\theta$.
	
	Next, consider $\theta = 0$ and take the neighborhood $A=\{ \theta \mid \| \theta^{(1)} \| \leq s, \dots, \| \theta^{(B)} \| \leq s \}$ for $s>0$.
	From the AM-GM inequality,
	\begin{align*}
		\| \theta \|^2 = \sum_b \| \theta^{(b)} \|^2 \geq B \left( \prod_b \| \theta^{(b)} \|^2 \right)^{1/B} = B \prod_b \| \theta^{(b)} \|^{2/B}.
	\end{align*}
	Thus,
	\begin{align*}
		\int_A \pi_{{\rm MBS}} (\theta) {\rm d} \theta \leq \ & C \int_0^{s} \cdots \int_0^{s} \prod_b r_b^{R_b+d_b-1-\gamma/B} {\rm d} r_1 \cdots {\rm d} r_B,
	\end{align*}
	where $r_b=\| \theta^{(b)} \|$ and $C$ is a constant.
	Therefore, $\pi_{{\rm MBS}} (\theta)$ is locally integrable at $\theta=0$ if $R_b+d_b-1-\gamma/B>-1$ for every $b$, which is equivalent to $\gamma < B(R_b+d_b)$ for every $b$.
\end{proof}

From Lemma~\ref{lem_MBS}, the generalized Bayes estimator with respect to $\pi_{{\rm MBS}}$ is well-defined when $0 \leq \gamma < B(R_b+d_b)$ for every $b$.
We denote it by $\hat{\theta}_{{\rm MBS}}$.

\begin{theorem}\label{th_MBS_dom}
	For every $M$,
	\begin{align}
		N^2 ({\rm E}_{\theta} [\| \hat{\theta}_{{{\rm MBS}}}-\theta \|^2] - {\rm E}_{\theta} [\| \hat{\theta}_{{{\rm BS}}}-\theta \|^2]) \to \gamma (\gamma-2 (R_{\#}+d-2)) \| \theta \|^{-2} \label{risk_diff_BS}
	\end{align}
	as $N \to \infty$.
	Therefore, if $0 < \gamma < 2 (R_{\#}+d-2)$, then the generalized Bayes estimator with respect to $\pi_{{\rm MBS}}$ in \eqref{MBS_prior} asymptotically dominates that with respect to $\pi_{{\rm BS}}$ in \eqref{BS_prior} under the Frobenius loss.
\end{theorem}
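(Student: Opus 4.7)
The proof plan is to apply Lemma~\ref{lem_vec_est} with $\pi_1 = \pi_{\rm BS}$ and $\pi_2(\theta) = \| \theta \|^{-\gamma}$, so that $\pi_1 \pi_2 = \pi_{\rm MBS}$. All that remains is to compute the three quantities appearing on the right-hand side of \eqref{est_risk_diff} and show that they combine to give exactly $\gamma(\gamma - 2(R_\# + d - 2)) \| \theta \|^{-2}$.

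First I would compute the gradient of $\log \pi_{\rm BS}(\theta) = \sum_b R_b \log \| \theta^{(b)} \|$. For an index $i$ belonging to block $b$, one obtains $\partial_i \log \pi_{\rm BS}(\theta) = R_b \, \theta_i / \| \theta^{(b)} \|^2$. Next, for $\log \pi_2(\theta) = -\gamma \log \| \theta \|$, the standard calculation (identical to the one already used for $\pi_{\rm S}$ in the proof of Theorem~\ref{th_MSVS1_dom}) yields
\begin{align*}
\nabla \log \pi_2(\theta) &= -\gamma \, \theta / \| \theta \|^2, \\
\Delta \log \pi_2(\theta) &= -\gamma (d - 2) / \| \theta \|^2.
\end{align*}
Taking the inner product block by block, the cross term collapses by $\sum_{i \in b} \theta_i^2 = \| \theta^{(b)} \|^2$ so that
\begin{align*}
\nabla \log \pi_{\rm BS}(\theta)^\top \nabla \log \pi_2(\theta) = -\gamma \sum_b R_b / \| \theta \|^2 = -\gamma R_\# / \| \theta \|^2,
\end{align*}
while $\| \nabla \log \pi_2(\theta) \|^2 = \gamma^2 / \| \theta \|^2$.

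Substituting these three quantities into Lemma~\ref{lem_vec_est} gives
\begin{align*}
N^2 \bigl( {\rm E}_\theta [\| \hat{\theta}_{\rm MBS} - \theta \|^2] - {\rm E}_\theta [\| \hat{\theta}_{\rm BS} - \theta \|^2] \bigr) \to \bigl( -2 \gamma R_\# + \gamma^2 - 2 \gamma (d - 2) \bigr) \| \theta \|^{-2},
\end{align*}
which factors as $\gamma (\gamma - 2(R_\# + d - 2)) \| \theta \|^{-2}$, establishing \eqref{risk_diff_BS}. The asymptotic domination in the second assertion is then immediate: the right-hand side is negative for every $\theta \neq 0$ precisely when $0 < \gamma < 2(R_\# + d - 2)$.

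The computation itself is routine; the only nontrivial point is to check that Lemma~\ref{lem_vec_est} is indeed applicable here, which requires $m_{\pi_{\rm MBS}}(y) < \infty$ (granted by Lemma~\ref{lem_MBS}) together with enough regularity of $\pi_{\rm BS}$ and $\pi_2$ away from the singular set $\{ \theta : \prod_b \| \theta^{(b)} \| = 0 \}$ to justify the asymptotic expansion of \eqref{est_asymp_risk}. Away from this measure-zero singular set both priors are smooth, so the expansion goes through pointwise and the claim follows.
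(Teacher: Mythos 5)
Your proof is correct and follows essentially the same route as the paper: apply Lemma~\ref{lem_vec_est} with $\pi_1=\pi_{\rm BS}$ and $\pi_2(\theta)=\|\theta\|^{-\gamma}$, compute the three derivative quantities (your block-by-block collapse of the cross term to $-\gamma R_{\#}\|\theta\|^{-2}$ matches the paper's), and substitute. The closing remarks about applicability of the lemma and the singular set are sensible bonus commentary but not a departure from the paper's argument.
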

\begin{proof}
	Let $\pi_{{\rm S}} (\theta) = \| \theta \|^{-\gamma}$.
	By straightforward calculation, we obtain
	\begin{align*}
		\nabla \log \pi_{{\rm BS}} (\theta)^{\top} \nabla \log \pi_{{\rm S}} (\theta) = -\gamma R_{\#} \| \theta \|^{-2},\\
		\nabla \log \pi_{{\rm S}} (\theta)^{\top} \nabla \log \pi_{{\rm S}} (\theta) = \gamma^2 \| \theta \|^{-2},\\
		\Delta \log \pi_{{\rm S}} (\theta) = -\gamma (d-2) \| \theta \|^{-2}.
	\end{align*}
	Therefore, from Lemma~\ref{lem_vec_est},
	\begin{align*}
		& {\rm E}_{\theta} [\| \hat{\theta}_{{{\rm MBS}}}-\theta \|^2] - {\rm E}_{\theta} [\| \hat{\theta}_{{{\rm BS}}}-\theta \|^2] \\
		= & \frac{1}{N^2} \left( 2 \nabla \log \pi_{{\rm BS}} (\theta)^{\top} \nabla \log \pi_{{\rm S}} (\theta) + \| \nabla \log \pi_{{\rm S}} (\theta) \|^2 + 2 \Delta \log \pi_{{\rm S}} (\theta) \right) + o(N^{-2}) \\
		= & \frac{1}{N^2} \gamma (\gamma-2 (R_{\#}+d-2)) \| \theta \|^{-2} + o(N^{-2}).
	\end{align*}
	Hence, we obtain \eqref{risk_diff_BS}.
\end{proof}

From \eqref{risk_diff_BS}, the choice $\gamma = R_{\#}+d-2$ is optimal.
As discussed in Section~\ref{sec_pred}, Theorem~\ref{th_MBS_dom} is extended to Bayesian prediction as follows.

\begin{theorem}
	For every $M$,
	\begin{align*}
		N^2 ({\rm E}_{\theta} [ D(p(\cdot \mid \theta),\hat{p}_{{\rm MBS}}(\cdot \mid y) ) ] - {\rm E}_{\theta} [ D(p(\cdot \mid \theta),\hat{p}_{{\rm BS}}(\cdot \mid y) ) ]) \to \frac{\gamma (\gamma-2 (R_{\#}+d-2))}{2} \| \theta \|^{-2}
	\end{align*}
	as $N \to \infty$.
	Therefore, if $p \geq 2$ and $0 < \gamma < p^2+p$, then the Bayesian predictive density with respect to $\pi_{{\rm MBS}}$ in \eqref{MBS_prior} asymptotically dominates that with respect to $\pi_{{\rm BS}}$ in \eqref{BS_prior} under the Kullback--Leibler loss.
\end{theorem}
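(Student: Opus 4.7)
The plan is to derive this theorem as a direct corollary of Theorem~\ref{th_MBS_dom}, using the vector-case analogue of Proposition~\ref{lem_equiv}. First I would observe that the argument used to prove Lemma~\ref{lem_mat_pred1} transfers verbatim to the vector setting $Y \sim {\rm N}_d(\theta, N^{-1} I_d)$: equation (3) of \cite{Komaki06} specializes, for the normal model with known covariance and Jeffreys (= uniform) prior, to
\[
{\rm E}_{\theta}\bigl[ D(p(\cdot \mid \theta), \hat{p}_{\pi}(\cdot \mid y)) \bigr] = \frac{d}{2N} + \frac{1}{2N^2}\bigl( \| \nabla \log \pi(\theta) \|^2 + 2 \Delta \log \pi(\theta) \bigr) + g(\theta) + o(N^{-2}),
\]
where $g(\theta)$ is independent of $\pi$. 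Substituting $\pi = \pi_{{\rm MBS}} = \pi_{{\rm BS}} \pi_{{\rm S}}$ with $\pi_{{\rm S}}(\theta) = \|\theta\|^{-\gamma}$, and separately $\pi = \pi_{{\rm BS}}$, and subtracting, the $\pi$-independent term $g(\theta) + d/(2N)$ cancels and I obtain the vector predictive analogue of Proposition~\ref{lem_equiv}: up to $o(N^{-2})$, the difference in KL risk is exactly one half of
\[
\frac{1}{N^2}\bigl( 2 \nabla \log \pi_{{\rm BS}}(\theta)^{\top} \nabla \log \pi_{{\rm S}}(\theta) + \| \nabla \log \pi_{{\rm S}}(\theta) \|^2 + 2 \Delta \log \pi_{{\rm S}}(\theta) \bigr).
\]

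Second, I would invoke the calculation already completed inside the proof of Theorem~\ref{th_MBS_dom}, which established that the bracketed expression equals $\gamma(\gamma - 2(R_{\#}+d-2)) \| \theta \|^{-2}$. Multiplying by $N^2$ and dividing by two yields the claimed limit $\gamma(\gamma - 2(R_{\#}+d-2)) \| \theta \|^{-2} / 2$. Well-definedness of $\hat{p}_{{\rm MBS}}$ (and hence finiteness of the KL risk for $N$ large enough) follows from Lemma~\ref{lem_MBS} under $0 \leq \gamma < B(R_b + d_b)$ for every $b$, so no separate integrability argument is needed.

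The asymptotic dominance conclusion then reduces to asking when the limiting coefficient is strictly negative, that is, when $0 < \gamma < 2(R_{\#} + d - 2)$; under this condition $\hat{p}_{{\rm MBS}}$ asymptotically dominates $\hat{p}_{{\rm BS}}$ in KL loss. There is essentially no real obstacle in this proof: once the estimation-prediction bridge (Komaki's asymptotic expansion) is written down for the vector case, everything is a mechanical transcription of the proof of Theorem~\ref{th_MBS_dom} with an extra factor $1/2$. The only minor care needed is to confirm that the $g(\theta)$ term and the leading $d/(2N)$ do not depend on the prior, which is immediate from the derivation in \cite{Komaki06}.
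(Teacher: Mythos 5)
Your proposal is correct and follows exactly the route the paper intends: the paper gives no explicit proof here, only the remark that Theorem~\ref{th_MBS_dom} extends to prediction ``as discussed in Section~\ref{sec_pred},'' which is precisely the vector analogue of Proposition~\ref{lem_equiv} (Komaki's expansion, subtraction of the $\pi$-independent terms, factor $1/2$) that you spell out. One useful observation emerges from your write-up: your derivation yields the dominance condition $0 < \gamma < 2(R_{\#}+d-2)$, which is what the sign of the limiting coefficient actually requires and matches Theorem~\ref{th_MBS_dom}; the condition ``$p \geq 2$ and $0 < \gamma < p^2+p$'' printed in the theorem statement has no meaning in the block-wise vector setting (there is no $p$) and appears to be a copy-paste artifact from Theorem~\ref{th_MSVS1_pred}, so your condition is the correct one.
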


\end{document}